\DeclareMathAlphabet{\mathfrak}{U}{euf}{m}{n}
\pgfplotsset{compat=1.15}
\newtheorem{thm}{Theorem}[section]
\newtheorem{lem}[thm]{Lemma}
\newtheorem{prop}[thm]{Proposition}
\newtheorem{coro}[thm]{Corollary}
\newtheorem*{intro}{Theorem}
\theoremstyle{remark}
\newtheorem{rema}[thm]{Remark}
\newtheorem{exa}[thm]{Example}
\newtheorem{defi}[thm]{Definition}
\DeclareFontFamily{U} {MnSymbolA}{}
\DeclareFontShape{U}{MnSymbolA}{m}{n}{
  <-6> MnSymbolA5
  <6-7> MnSymbolA6
  <7-8> MnSymbolA7
  <8-9> MnSymbolA8
  <9-10> MnSymbolA9
  <10-12> MnSymbolA10
  <12-> MnSymbolA12}{}
\DeclareFontShape{U}{MnSymbolA}{b}{n}{
  <-6> MnSymbolA-Bold5
  <6-7> MnSymbolA-Bold6
  <7-8> MnSymbolA-Bold7
  <8-9> MnSymbolA-Bold8
  <9-10> MnSymbolA-Bold9
  <10-12> MnSymbolA-Bold10
  <12-> MnSymbolA-Bold12}{}
\DeclareSymbolFont{MnSyA} {U} {MnSymbolA}{m}{n}
\DeclareMathSymbol{\lcirclearrowright}{\mathrel}{MnSyA}{252}
\DeclareMathSymbol{\rcirclearrowleft}{\mathrel}{MnSyA}{250}
\newcommand{\A}{\mathcal{A}}
\newcommand{\B}{\mathcal{B}}
\newcommand{\F}{\widetilde{F}_{2n}}
\newcommand{\ee}{\mathbf{e}}
\renewcommand{\a}{\alpha}
\newcommand{\sss}{\mathtt{s}}
\newcommand{\ttt}{\mathtt{t}}
\title{Morita equivalences, moduli spaces and flag varieties}
\author{Daniel \'Alvarez}
\address{Instituto de Matem\'{a}tica Pura e Aplicada, Estrada Dona Castorina 110, Rio de
Janeiro, 22460-320, Brazil}
\email{uerbum@impa.br}
\date{} 
\begin{document}

\begin{abstract}  
Double Bruhat cells in a connected complex semisimple Lie group $G$ emerged as a crucial concept in the work of S. Fomin and A. Zelevinsky on total positivity and cluster algebras. These cells are special instances of a broader class of cluster varieties known as generalized double Bruhat cells, which can be studied collectively as Poisson subvarieties of $\F = \B^{2n-1} \times G$, where $\B$ is the flag variety of $G$. The spaces $\F$ are Poisson groupoids over $\B^n$ and were introduced by J.-H. Lu, V. Mouquin, and S. Yu in the study of configuration Poisson groupoids of flags.

In this work, we describe the spaces $\F$ as decorated moduli spaces of flat $G$-bundles over a disc. This perspective yields the following results:
\begin{enumerate}
    \item We explicitly integrate the Poisson groupoids $\F$ to symplectic double groupoids, which are complex algebraic varieties. Furthermore, we show that these integrations are symplectically Morita equivalent for all $n$.
    \item Using this construction, we integrate the Poisson subgroupoids of $\F$ formed by unions of generalized double Bruhat cells to explicit symplectic double groupoids. As a corollary, we obtain integrations for the top-dimensional generalized double Bruhat cells contained therein.
    \item Finally, we relate our integration to the work of P. Boalch on meromorphic connections. We lift the torus actions on $\F$ to the double groupoid level and show that they correspond to the quasi-Hamiltonian actions on the fission spaces of irregular singularities.
\end{enumerate} 
\end{abstract}
\maketitle
\tableofcontents 

\section*{Introduction}  It is a remarkable fact that many important Poisson structures, originally constructed using Lie theory, have later been rediscovered in the study of moduli spaces of flat connections. This novel interpretation often leads to a deeper understanding of these Poisson structures. For instance, the pioneering work in \cite{quahammer,boastopoi,focros,sevsomtit} recasts Poisson-Lie groups and their corresponding symplectic groupoids within this framework. 

This work is motivated by the study of the (generalized) double Bruhat cells associated with a connected complex semisimple Lie group $G$. These are Poisson varieties that are significant because of their connections to geometric representation theory and their appearance at the inception of the theory of cluster algebras. By interpreting these objects as moduli spaces, we associate symplectic groupoids to them, which can aid in understanding their quantizations and other applications, as discussed below. In fact, symplectic groupoids are central to Poisson geometry, serving as global counterparts to Poisson structures, much like Lie groups serve as global versions of Lie algebras. Beyond their direct applications to Poisson manifolds (for instance, when constructing normal forms \cite{fermar}), symplectic groupoids provide interesting ways to connect the classical and the quantum worlds \cite{braqua,focgonqua} and encapsulate generalized K\"ahler metrics at the global level \cite{intgk}.

In this context, $G$, equipped with its standard multiplicative Poisson structure depending on a pair of opposite Borel subgroups $(B_+, B_-)$, is a Poisson group with dual Poisson group $G^* = B_+ \times_T B_-$, where $T$ is the maximal torus $B_+ \cap B_-$. Let $W$ be the Weyl group of $(G, T)$. The double Bruhat cells $G^{u,v} = B_+uB_+ \cap B_-vB_- \subset G$ for $u,v \in W$ first emerged in the study of total positivity and cluster algebras \cite{bfz,doubru}. These cells were later found by M. Gekhtman, M. Shapiro, and A. Vainshtein to inherit a Poisson structure from $G$ which is compatible with the cluster algebra structure on their coordinate rings \cite{clualgpoigeo}. Furthermore, J.-H. Lu and V. Mouquin discovered that this Poisson structure can be naturally extended into either a Poisson groupoid or a Poisson bimodule structure, as pointed out in \cite{lumou2}. In fact, it is proven in \cite{lumou2} that the double Bruhat cell $G^{w,w}$ is an algebraic Poisson groupoid over the Schubert cell $B_+wB_+/B_+$, and $G^{v,w}$ functions as a bimodule for a pair of commuting Poisson actions of $G^{v,v}$ and $G^{w,w}$. Furthermore, the symplectic leaves of $G^{w,w}$ passing through the unit submanifold are symplectic groupoids. Collectively, double Bruhat cells can be studied as Poisson subgroupoids of the action groupoid
\[ G \ltimes G/B_+ \rightrightarrows G/B_+ \]
associated with the translation action of $G$ on the flag variety $\mathcal{B}= G/B_+$ \cite{lumou2,conpoigro}.

The Poisson groupoid structure on a double Bruhat cell was generalized in \cite{conpoigro} and described as an instance of a {\em configuration Poisson groupoid of flags}. In this broader framework, one considers a Poisson groupoid structure on 
\[ \widetilde{F}_{2n} := \overbrace{G \times_{B_+} G \times_{B_+} \dots \times_{B_+} G}^{2n \text{ times}} \rightrightarrows  F_n := \overbrace{G \times_{B_+} G \times_{B_+} \dots \times_{B_+} G/B_+}^{n \text{ times}} \cong (G/B_+)^n; \]
where the Poisson structure on each space is a {\em mixed product} as defined in \cite{lumou}. Generalized Schubert cells in $F_n$ play a role analogous to Schubert cells in $G/B_+$ and are given by
\[ \mathcal{O}^{\mathbf{u}} = B_+u_1B_+ \times_{B_+} \dots \times_{B_+} B_+u_nB_+/B_+, \quad u_i \in W. \]
We can identify $\widetilde{F}_{2n}\cong \mathcal{B}^{2n-1} \times G$ and view the {\em total configuration Poisson groupoid of flags} 
\[ \Gamma_{2n}=\mathcal{B}^{2n-1} \times \mathcal{A}^\circ \cong \mathcal{B}^{2n-1} \times B_- \subset \widetilde{F}_{2n}\] 
as a Poisson subgroupoid, where $\mathcal{A}^\circ=B_-N_+/N_+\cong B_-$ for $N_+\subset B_+$ the maximal unipotent subgroup. Then the generalized double Bruhat cells can be identified with the $T$-orbits of the symplectic leaves in $\Gamma_{2n}$ under a natural $T$-action by automorphisms, these are called {\em $T$-leaves} \cite{conpoigro}. Consequently, $\Gamma_{2n}$ decomposes as a finite disjoint union of such $T$-leaves, each of which becomes a cluster variety by viewing it as a special double Bott-Samelson cell \cite{cludoubot} (for $G$ simply-connected or of adjoint type).

As previously noted, integrating a Poisson manifold involves constructing a symplectic groupoid that induces the given Poisson structure. In the context of Poisson groupoids, the integration problem requires the construction of a {\em symplectic double groupoid}. This is a symplectic manifold that serves as the total space for two compatible groupoid structures, both of which are defined over Poisson groupoids that we say are in duality. We illustrate such objects as in the diagram below:  
\begin{align*}   \xymatrix{ \mathcal{G}  \ar@<-.5ex>[r] \ar@<.5ex>[r]\ar@<-.5ex>[d] \ar@<.5ex>[d]& \mathcal{K}^*   \ar@<-.5ex>[d] \ar@<.5ex>[d] \\ \mathcal{K}   \ar@<-.5ex>[r] \ar@<.5ex>[r] & M, } \label{eq:dousym} \end{align*}
so here $\mathcal{G} $ is a symplectic manifold equipped with two groupoid structures over the Poisson groupoids in duality $\mathcal{K}\rightrightarrows M $ and $\mathcal{K}^* \rightrightarrows M$. The vertical structure maps are groupoid morphisms with respect to the horizontal groupoid structures and viceversa.

This work is a natural continuation of \cite{poigromod,tracou,lumou2,lumou,conpoigro}, combining many insights therein to solve the integration problem of the Poisson groupoids $\widetilde{F}_{2n} \rightrightarrows F_n$, viewed as complex algebraic Poisson varieties. Perhaps surprisingly, we show that the integrations we construct are Morita equivalent. 
\begin{intro} The Poisson groupoid $\widetilde{F}_{2n} \rightrightarrows F_n$ is integrable by an explicit complex algebraic symplectic double groupoid $\mathcal{G}_{2n}$ with dual Poisson groupoid $\widetilde{F}_{2n}^* \rightrightarrows F_n$. Moreover, there is a distinguished symplectic Morita equivalence $\mathcal{G}_{n+m} $ between $\mathcal{G}_{2n} $ and $\mathcal{G}_{2m}  $ for all $n,m$ positive integers. \end{intro}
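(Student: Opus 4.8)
The plan is to exploit the identification of $\widetilde{F}_{2n}$ with a decorated moduli space of flat $G$-bundles on a disc, converting every structure map into a cut-and-glue operation on decorated surfaces. Concretely, I would mark $2n$ points on the boundary circle, decorate all but one with a flag in $\mathcal{B}$ and the remaining one with a framing in $G$, and identify the moduli space of such flat bundles (rigidified by the framing) with $\widetilde{F}_{2n} \cong \mathcal{B}^{2n-1} \times G$. Splitting the marked points into two complementary families of $n$ reproduces the source and target maps to $F_n \cong \mathcal{B}^n$, while the mixed product Poisson structure is matched with the bivector induced by the Atiyah--Bott--Goldman two-form. This dictionary is the backbone of the argument.

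For the integration I would pass to an associated framed moduli space $\mathcal{G}_{2n}$ of twice the dimension, obtained by adjoining framing data along the boundary; built from the Atiyah--Bott--Goldman form through the fusion and reduction procedure of Alekseev--Malkin--Meinrenken, it carries a canonical symplectic form. The original $\widetilde{F}_{2n}$ re-appears inside $\mathcal{G}_{2n}$ as the Lagrangian unit submanifold where the added framings are trivial, yielding one symplectic groupoid structure $\mathcal{G}_{2n} \rightrightarrows \widetilde{F}_{2n}$ that integrates the Poisson manifold $\widetilde{F}_{2n}$. The transverse structure $\mathcal{G}_{2n} \rightrightarrows \widetilde{F}_{2n}^*$ comes from reading the framing data along the complementary family of arcs, and from the induced Lie bialgebroid I would identify $\widetilde{F}_{2n}^*$ explicitly. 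Compatibility of the two structures---each family of structure maps being a morphism for the other---follows from the locality of the moduli construction under cutting the disc, so the double groupoid axioms reduce to associativity of holonomy concatenation. Nondegeneracy of the symplectic form, hence genuine integration, is ensured once the underlying quasi-Hamiltonian reduction is shown to be clean.

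For the Morita statement I would realize the bimodule as the framed moduli space $\mathcal{G}_{n+m}$ of the same kind of decorated disc, now with its boundary marked points split into a distinguished family of $n$ and a distinguished family of $m$. Gluing a $\mathcal{G}_{2n}$-disc onto $\mathcal{G}_{n+m}$ along the $n$-family defines a left action with moment map the boundary restriction $\mathcal{G}_{n+m} \to \widetilde{F}_{2n}$, and gluing a $\mathcal{G}_{2m}$-disc along the $m$-family defines a commuting right action with moment map $\mathcal{G}_{n+m} \to \widetilde{F}_{2m}$; the symmetric case $m = n$ returns the unit bimodule $\mathcal{G}_{2n}$. Biprincipality of both actions---freeness, properness, and each moment map being a surjective submersion whose fibres are the orbits of the opposite action---is read off from the gluing picture, since cutting a disc along an interior arc canonically exhibits it as a composite. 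Compatibility of the symplectic form with both actions is inherited from the additivity of the Atiyah--Bott--Goldman form under gluing, and the composition law $\mathcal{G}_{n+k} \circ \mathcal{G}_{k+m} \cong \mathcal{G}_{n+m}$ follows by gluing along the shared $k$-family and contracting, giving the advertised symplectic Morita equivalence for all $n, m$.

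The hardest part will be controlling the symplectic two-form through these gluing operations while staying in the complex algebraic category. Fusion and reduction live most naturally in the quasi-Hamiltonian framework, where nondegeneracy and the moment-map condition must be re-established after each reduction; tracking the two-form through every cut-and-glue step---and showing that the reductions at issue are clean, so that the quotients are smooth and symplectic rather than merely Poisson---is the crux. A secondary obstacle is pinning down $\widetilde{F}_{2n}^*$ concretely enough to confirm that the transverse groupoid is globally well defined as an algebraic groupoid and that the symplectic form is multiplicative for it as well as for $\mathcal{G}_{2n} \rightrightarrows \widetilde{F}_{2n}$---the condition upgrading the two commuting groupoid structures to a genuine symplectic double groupoid.
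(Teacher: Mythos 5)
Your outline does follow the paper's overall strategy: realize $\widetilde{F}_{2n}$ as a decorated moduli space over a marked disc (Theorem \ref{pro:conflaqpoi}), integrate it by the decorated moduli space of the doubled surface, and produce the Morita bimodule $\mathcal{G}_{n+m}$ by gluing surfaces. However, two of your steps have genuine gaps.

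First, the moment maps of the Morita equivalence are misidentified. In the gluing picture, the natural maps out of $\mathcal{G}_{n+m}$ are restrictions of a flat bundle on $\widehat{\Sigma_{n+m}}$ to the two sub-surfaces along which one glues, and these sub-surfaces are the doubles $\widehat{\Sigma_{0,n}}$, $\widehat{\Sigma_{0,m}}$ of the discs with $n+1$ and $m+1$ marked points; their decorated moduli spaces are the \emph{dual} Poisson groupoids $\widetilde{F}_{2n}^*$ and $\widetilde{F}_{2m}^*$, not $\widetilde{F}_{2n}$ and $\widetilde{F}_{2m}$. Accordingly, the equivalence of Theorem \ref{thm:symdoumor} is a \emph{horizontal} Morita equivalence between $\mathcal{G}_{2n}\rightrightarrows\widetilde{F}_{2n}^*$ and $\mathcal{G}_{2m}\rightrightarrows\widetilde{F}_{2m}^*$, with moment maps $q:\mathcal{G}_{n+m}\to\widetilde{F}_{2n}^*$ and $p:\mathcal{G}_{n+m}\to\widetilde{F}_{2m}^*$, compatible with the vertical structures that integrate $\widetilde{F}_{2n}$ and $\widetilde{F}_{2m}$. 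Your claimed moment maps $\mathcal{G}_{n+m}\to\widetilde{F}_{2n}$ and $\mathcal{G}_{n+m}\to\widetilde{F}_{2m}$ do not exist as restriction maps ($\widehat{\Sigma_{n+m}}$ contains no distinguished copy of the disc underlying $\widetilde{F}_{2n}$), and a Morita equivalence of the vertical structures would be a substantially different assertion—it would make $\widetilde{F}_{2n}$ and $\widetilde{F}_{2m}$ Morita equivalent as Poisson manifolds, which is neither what the construction yields nor what the paper proves. This slip traces back to your plan to identify $\widetilde{F}_{2n}^*$ only a posteriori ``from the induced Lie bialgebroid'': in the paper the dual is itself a decorated moduli space (of $\widehat{\Sigma_{0,n}}$), and the double groupoid structure comes from the two commuting reflection symmetries of $\widehat{\Sigma_{2n}}$ (Proposition \ref{pro:symdou}); without this, neither the duality nor the correct moment maps are visible.

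Second, biprincipality is not ``read off from the gluing picture'' once boundary decorations are imposed. Surjectivity of restriction maps is automatic for bare representation varieties (fundamental groupoids are free), but after cutting down by the decorations $G^*$, $\widetilde{B}_+$, $B_-$ one must extend a representation subject to these constraints; the paper reduces this to solving $h^{-1}(y^{-1}x)h=bg^{-1}$ with $(x,y)\in G^*$, which works because the big Gauss cell $N_-TN_+$ covers $G$ under conjugation. Without this Lie-theoretic input you only obtain commuting symplectic actions, not a Morita equivalence. Relatedly, you flag algebraicity as the crux but offer no mechanism: the paper does not track the two-form through transcendental reductions, it embeds $\mathcal{G}_{2n}$ as a \emph{slim} double groupoid, an algebraic subvariety of $\widetilde{F}_{2n}\times\widetilde{F}_{2n}^*\times\widetilde{F}_{2n}^*\times\widetilde{F}_{2n}$ cut out by explicit equations, using that the relevant quotients are geometric quotients by associated-bundle constructions over flag varieties (Propositions \ref{pro:expduapoi} and \ref{pro:slim}). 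Finally, a minor count: the disc needs $2n+1$ marked points, two of which carry $\mathfrak{g}^*$-type decorations; with $2n$ points and a single framed vertex your moduli space has dimension $\dim \mathcal{B}^{2n-1}$ and misses the $G$-factor of $\widetilde{F}_{2n}\cong\mathcal{B}^{2n-1}\times G$.
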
 
Let us point out that the Morita equivalences appearing in this Theorem take into account the double groupoid structures as in \cite{intgk}, see Proposition \ref{pro:symdou} and Theorem \ref{thm:symdoumor} below. Our integration also shows that generalized Schubert cells are orbits of the dual Poisson groupoid $\widetilde{F}_{2n}^* \rightrightarrows F_n$ and hence they are contained in symplectic leaves of $\widetilde{F}_{2n} \rightrightarrows F_n$, thus recovering an observation in \cite{conpoigro}, see Corollary \ref{cor:gensch}.
 
Furthermore, we obtain integrations of the total configuration Poisson groupoids of flags and, consequently, of the open generalized double Bruhat cells contained therein. Specifically, we establish that $\Gamma_{2n} \subset \widetilde{F}_{2n}$ is a \textit{Lie-Dirac submanifold} \cite{craruipoi, dirsub} which admits an integration realized as a symplectic double subgroupoid of $\mathcal{G}_{2n}$. This discussion, along with the results in Theorems \ref{thm:inttotcon} and \ref{thm:resmorequ}, is summarized as follows.
\begin{intro} 
The Poisson groupoid $\Gamma_{2n} \rightrightarrows F_n$ is integrable by a symplectic double subgroupoid $\mathcal{H}_{2n} \subset \mathcal{G}_{2n}$. Furthermore, the symplectic Morita equivalence $\mathcal{G}_{n+m}$ between $\mathcal{G}_{2n}$ and $\mathcal{G}_{2m}$ restricts to a symplectic Morita equivalence between $\mathcal{H}_{2n}$ and $\mathcal{H}_{2m}$ for all positive integers $n$ and $m$. 
\end{intro}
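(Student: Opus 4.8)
The plan is to leverage the explicit moduli-theoretic integration $\mathcal{G}_{2n}$ furnished by the first Theorem, together with the fact that $\Gamma_{2n} = \mathcal{B}^{2n-1}\times B_- \subset \mathcal{B}^{2n-1}\times G = \widetilde{F}_{2n}$ is a Lie-Dirac submanifold, and to produce $\mathcal{H}_{2n}$ by restricting both groupoid structures of the double groupoid over $\Gamma_{2n}$. Recall that the leg $\mathcal{G}_{2n} \rightrightarrows \widetilde{F}_{2n}$ is the symplectic groupoid of the Poisson manifold $\widetilde{F}_{2n}$. Being Lie-Dirac, $\Gamma_{2n}$ carries an induced Poisson structure realized by a Lie algebroid embedding $T^*\Gamma_{2n} \hookrightarrow (T^*\widetilde{F}_{2n})|_{\Gamma_{2n}}$; since the ambient algebroid is integrable by $\mathcal{G}_{2n}$, so is the subalgebroid, and its integration $\mathcal{H}_{2n} \rightrightarrows \Gamma_{2n}$ embeds into $\mathcal{G}_{2n} \rightrightarrows \widetilde{F}_{2n}$. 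I would first establish that this embedding realizes $\mathcal{H}_{2n}$ as a \emph{symplectic} subgroupoid, i.e.\ that the Lie-Dirac splitting removes precisely the conormal directions $N^*\Gamma_{2n}$ and their $\mathcal{G}_{2n}$-counterparts, leaving $\omega|_{\mathcal{H}_{2n}}$ nondegenerate. In the explicit model this amounts to replacing the $G$-factor of $\mathcal{G}_{2n}$ by the subvariety corresponding to $B_-$, so that $\mathcal{H}_{2n}$ is manifestly a smooth complex algebraic symplectic submanifold.

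Next I would upgrade $\mathcal{H}_{2n}$ to a symplectic double \emph{sub}groupoid. The structure $\mathcal{H}_{2n} \rightrightarrows \Gamma_{2n}$ is already in hand and, by construction, integrates $\Gamma_{2n}$; it remains to check that $\mathcal{H}_{2n}$ is closed under the other multiplication of $\mathcal{G}_{2n}$, so that it descends to a subgroupoid over a sub-Poisson-groupoid $\Gamma_{2n}^* \subset \widetilde{F}_{2n}^*$. Here I would use that $\Gamma_{2n}$ is a Poisson subgroupoid of $\widetilde{F}_{2n} \rightrightarrows F_n$: this identifies the conormal data of $\Gamma_{2n}$ with a Poisson subgroupoid of the dual, and hence forces the corresponding source and target of $\mathcal{H}_{2n}$ to land in $\Gamma_{2n}^*$ and the product of two composable elements of $\mathcal{H}_{2n}$ to stay in $\mathcal{H}_{2n}$. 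Combining the two legs, $\mathcal{H}_{2n}$ is a symplectic double subgroupoid of $\mathcal{G}_{2n}$ integrating $\Gamma_{2n} \rightrightarrows F_n$.

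For the second assertion I would treat the Morita bimodule $\mathcal{G}_{n+m}$ in the same spirit. It is an explicit symplectic manifold built from the configuration-of-flags moduli data for $n+m$ flags, carrying commuting actions of $\mathcal{G}_{2n}$ and $\mathcal{G}_{2m}$ with moment maps $J\colon \mathcal{G}_{n+m}\to \widetilde{F}_{2n}$ and $J'\colon \mathcal{G}_{n+m}\to \widetilde{F}_{2m}$. I would set $\mathcal{H}_{n+m} := J^{-1}(\Gamma_{2n})\cap (J')^{-1}(\Gamma_{2m})$, again describable as the restriction replacing the appropriate $G$-factors by $B_-$, and verify that it is a symplectic submanifold on which the restricted actions of $\mathcal{H}_{2n}$ and $\mathcal{H}_{2m}$ are principal with mutually transitive orbits. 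Since $J$ and $J'$ are the explicit flag-restriction maps, the defining properties of a symplectic Morita equivalence (completeness and surjective-submersivity of the moment maps, matching of leaf spaces) are inherited after passing to $B_-$, and the double-groupoid compatibility of the equivalence in the sense of \cite{intgk} is preserved because the restriction is performed uniformly in all factors, for all $n$ and $m$.

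The main obstacle is the simultaneous compatibility required in the first two paragraphs: the single Lie-Dirac reduction producing $\mathcal{H}_{2n}$ must be clean (transverse intersections, so that $\mathcal{H}_{2n}$ is smooth and $\omega$ restricts nondegenerately) \emph{and} respected by both groupoid multiplications of the double groupoid at once. Neither leg alone forces this; it is a genuine double-groupoid condition, and I expect the explicit moduli model — where all structure maps and the symplectic form are given by concrete formulas in the $G$- and $B_-$-factors — to be what makes the verification tractable. By contrast, once $\mathcal{H}_{2n}$ and $\mathcal{H}_{2m}$ are in place, the restriction of the Morita equivalence should be essentially formal.
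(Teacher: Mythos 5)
Your proposal has the right overall shape (cut $\mathcal{H}_{2n}$ out of the explicit model $\mathcal{G}_{2n}$, then restrict the bimodule), but it assumes exactly what the paper has to prove. You take as a starting ``fact'' that $\Gamma_{2n}\subset \widetilde{F}_{2n}$ is Lie-Dirac and that its conormal data ``identifies with a Poisson subgroupoid $\Gamma_{2n}^*\subset \widetilde{F}_{2n}^*$.'' Neither is automatic: not every Poisson submanifold is Lie-Dirac (the paper emphasizes this), and the conormal data of a Poisson subgroupoid naturally produces a bundle of \emph{ideals} $\text{Ann}(\gamma_{2n})\subset \ker\mathtt{a}_B$, so the dual of $\Gamma_{2n}$ is naturally a \emph{quotient} $\widetilde{F}_{2n}^*/\mathcal{N}_{2n}$, not a subgroupoid. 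To realize it inside $\widetilde{F}_{2n}^*$ --- which is what lets you cut $\mathcal{H}_{2n}$ out of $\mathcal{G}_{2n}$ --- one needs a \emph{multiplicative section} of this quotient, and producing it is the technical heart of the paper: Lemma \ref{lem:ann} computes the annihilator subgroupoid $\mathcal{N}_{2n}\cong N_-\times F_n$ via the explicit triangulated formula for the symplectic form (the key point being that $\omega$ in \eqref{eq:polwie} vanishes on $B_-\times N_-$), and exhibits the section $\Gamma_{2n}^*=[(B_+\times N_+^{n-1})\times_{B_+^n}G^n]$; Proposition \ref{pro:quopoisub} then yields the double subgroupoid, and Lie-Dirac-ness of $\Gamma_{2n}$ is a \emph{consequence}. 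Relatedly, your explicit description of $\mathcal{H}_{2n}$ as ``replacing the $G$-factor of $\mathcal{G}_{2n}$ by $B_-$'' is not correct: constraining only the vertical sides gives the restriction $\mathcal{G}_{2n}|_{\Gamma_{2n}}$, which is coisotropic (over-symplectic) with degenerate $2$-form, its kernel being exactly the $\mathcal{N}_{2n}$-directions; the symplectic cut also requires the \emph{horizontal} source and target to lie in $\Gamma_{2n}^*$, i.e.\ $\mathcal{H}_{2n}=\mathcal{G}_{2n}\cap(\Gamma_{2n}\times\Gamma_{2n}^*\times\Gamma_{2n}^*\times\Gamma_{2n})$ in the slim embedding of Proposition \ref{pro:slim}. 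Finally, your claim that integrability of the induced algebroid gives an embedding into $\mathcal{G}_{2n}$ is unjustified: the abstract Lie-Dirac theory produces a symplectic subgroupoid of \emph{some} integration, and $\mathcal{G}_{2n}$ is not known to have connected source fibres, as the paper itself notes.

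The same issues propagate to the Morita half. The moment maps of the horizontal Morita equivalence of Theorem \ref{thm:symdoumor} are $p:\mathcal{G}_{n+m}\to \widetilde{F}^*_{2m}$ and $q:\mathcal{G}_{n+m}\to \widetilde{F}^*_{2n}$, landing in the \emph{duals}, not maps to $\widetilde{F}_{2n}$ and $\widetilde{F}_{2m}$ as in your definition of $\mathcal{H}_{n+m}$; the correct restriction again constrains both the vertical sides and the dual factors, $\mathcal{H}_{n+m}=\mathcal{G}_{n+m}\cap(\Gamma_{n+m}\times\Gamma^*_{2n}\times\Gamma^*_{2m}\times\Gamma_{n+m})$. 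And this step is not ``essentially formal'': nondegeneracy of the restricted $2$-form on $\mathcal{H}_{n+m}$ is proved in Theorem \ref{thm:resmorequ} by assembling $\mathcal{H}_{2n}\sqcup\mathcal{H}_{n+m}\sqcup\mathcal{H}_{2m}$ into a single groupoid over $\widetilde{F}^*_{2n}\sqcup\widetilde{F}^*_{2m}$ and invoking multiplicativity together with nondegeneracy along the units (which rests on Lemma \ref{lem:ann}), while surjectivity of the restricted moment maps onto $\Gamma^*_{2n}$ and $\Gamma^*_{2m}$ requires a Lie-theoretic argument (every element of $G$ lies in a Borel subgroup and all Borels are conjugate), parallel to Step 3 of the proof of Theorem \ref{thm:symdoumor}. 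Without the annihilator computation and the multiplicative section, none of these verifications can get off the ground.
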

Finally, by lifting the natural $T$-action on $\widetilde{F}_{2n}$ to its integration, we obtain the quasi-hamiltonian action on the moduli space of meromorphic connections on a disc with a pole of order two, this is one of the simplest (higher) fission spaces introduced in \cite{geobrasto,quahammer,fisspa}. See Theorem \ref{thm:lifact} for this result, further ramifications of this interpretation and applications to the study of integrable systems as in \cite{kogzel} shall be treated elsewhere.
\begin{intro} The natural $T$-action on $\widetilde{F}_{2n}$ lifts to a quasi-hamiltonian $T$-action on $\mathcal{G}_{2n}$. \end{intro}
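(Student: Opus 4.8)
The strategy is to work inside the decorated moduli-space model of $\mathcal{G}_{2n}$ developed in the preceding sections and to transport Boalch's quasi-Hamiltonian structure on the fission space of a disc with an order-two pole across the resulting identification. First I would exhibit the lift. The natural $T$-action on $\widetilde{F}_{2n}$ is by automorphisms of the Poisson groupoid, and under the moduli description it is induced by acting on the boundary decoration — the $T$-framing recording the formal/irregular data at the marked point. Since $\mathcal{G}_{2n}$ is built functorially from this decorated moduli space by fusion, the action lifts canonically to an action on $\mathcal{G}_{2n}$ by automorphisms of the symplectic double groupoid; I would write the lift on the explicit model and check that it commutes with both the horizontal and vertical multiplications and preserves the symplectic form $\omega$, the latter being automatic since $T$ acts through gauge and decoration symmetries.

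Next I would produce the group-valued moment map $\mu\colon \mathcal{G}_{2n}\to T$, namely the $T$-component of the relevant boundary holonomy (the formal monodromy attached to the order-two pole). Because $T$ is abelian the Alekseev--Malkin--Meinrenken axioms collapse sharply: the Cartan $3$-form $\chi$ on $T$ vanishes, so the condition $d\omega=\mu^{*}\chi$ reduces to $d\omega=0$, which holds as $\omega$ is symplectic; equivariance becomes $T$-invariance of $\mu$, immediate from conjugation-triviality; and the minimal-degeneracy axiom reads $\ker\omega_{x}=\{v_{\xi}(x):\xi\in\ker(\mathrm{id}+1)\}=0$, again automatic. Thus the one substantive identity to establish is the moment-map relation
\[ \iota_{v_{\xi}}\,\omega \;=\; \mu^{*}\langle\theta,\xi\rangle, \qquad \xi\in\mathfrak{t}, \]
with $\theta$ the Maurer--Cartan form, $\langle\cdot,\cdot\rangle$ the fixed invariant pairing, and $v_{\xi}$ the generator of $\xi$.

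I would verify this identity in two complementary ways. Directly, one contracts the explicit symplectic form of $\mathcal{G}_{2n}$ against $v_{\xi}$ and reduces the computation to a Maurer--Cartan calculation on the $T$-decoration factor; multiplicativity of group-valued moment maps under fusion then assembles the pieces, so that $\mu$ is the fusion product of the moment maps of the $n=1$ building blocks. More conceptually, I would exhibit an explicit isomorphism of $\mathcal{G}_{2n}$ (or its distinguished symplectic slice) with the fission space of a disc carrying a pole of order two and invoke Boalch's theorem that the latter is quasi-Hamiltonian for the residual $T$-action, reading off $\mu$ and the relation above from his normal form.

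The principal obstacle is precisely this matching: reconciling the algebraically defined data — the explicit symplectic form on the double groupoid, the $T$-action by groupoid automorphisms, and the candidate $\mu$ — with the analytically defined fission space and its canonical quasi-Hamiltonian $2$-form, and doing so $T$-equivariantly and compatibly with the double-groupoid structure. A secondary point is to ensure the lift is compatible with the fusion used to build $\mathcal{G}_{2n}$ from its $n=1$ pieces, so that the moment-map relation propagates through the fusion; here I expect the bookkeeping of Alekseev--Malkin--Meinrenken fusion to suffice.
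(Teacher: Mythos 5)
Your proposal is correct and follows essentially the same route as the paper: the paper's proof of Theorem \ref{thm:lifact} rests on Lemma \ref{lem:grofus}, which realizes Boalch's fission space ${}_G\mathcal{A}_T$ as the decorated moduli space of a marked cylinder and identifies $\mathcal{G}_{2n}$ with the reduced fusion product $[({}_G\mathcal{A}_T\times \overline{{}_G\mathcal{A}_T})\ast \mathcal{M}_{2n}]\sslash G^2$, so that the (in fact $T^2$-valued) moment map and the quasi-hamiltonian structure are inherited from the fission factors exactly as in your second, ``more conceptual'' verification. The paper then checks the lift of the toric action and its multiplicativity by viewing $\mathcal{G}_{2n}$ as a slim double groupoid inside $\widetilde{F}_{2n}\times\widetilde{F}_{2n}^*\times\widetilde{F}_{2n}^*\times\widetilde{F}_{2n}$ and computing on a skeleton, which plays the role of your direct verification step.
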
  

Our approach relies on the versatile technique of constructing double groupoid structures and Morita equivalences through decorated moduli spaces of flat bundles over surfaces, introduced in \cite{sevmorqua} and further developed in \cite{poigromod,intgk}. This construction builds upon the foundational work of \cite{alemeimal,alemeikos} which, in its most general form (as in \cite{quisur,quisur2}), encompasses the moduli spaces of meromorphic connections studied in \cite{geobrasto,quahammer,fisspa}. To prove our results, we refine the methods in \cite{poigromod} to handle Morita equivalences and adapt the study of Poisson submanifolds and their integrability from \cite{craruipoi} to our context. In particular, we show that any decorated moduli space is a Poisson submanifold of an explicitly integrable Poisson manifold. 

In future work, we will investigate the striking parallelism between our construction of symplectic groupoids via surface gluing and the {\em symplectic double} of a cluster variety introduced by V. V. Fock and A. B. Goncharov. This construction yields a symplectic groupoid as a semi-classical limit of a {\em quantum double} \cite{focgonqua,gonshe} (see also \cite{symgrocluman}). The resemblance is particularly evident because moduli spaces of flat bundles provide key examples of cluster varieties, and their symplectic doubles have been studied in \cite{symdoumodspa}.  
Finally, let us mention that the symplectic double groupoids we use may be extended to symplectic 2-groupoids that integrate the underlying Courant algebroids, this will be the subject of a separate work about integration of action Courant algebroids \cite{sym2gro}.

\subsection*{Acknowledgements} The author is very grateful to Jiang-Hua Lu for posing the question of integrating $\widetilde{F}_{2n}$ using decorated moduli spaces and to Henrique Bursztyn for teaching him the key ideas needed in this endeavor. Numerous stimulating conversations with Francis Bischoff, Alejandro Cabrera, Miquel Cueca, Marco Gualtieri and Eckhard Meinrenken have further influenced this work. The author is also grateful to the referee for many useful comments and suggestions.
\section{Preliminaries} In this paper, we work with Poisson structures on complex algebraic varieties that can be most efficiently constructed using the language of quasi-Poisson manifolds. See \cite{poigromod} and its references for a thorough review of this theory and its reformulation in terms of Dirac geometry; \S \ref{subsec:quapoimodspa} briefly recalls the formalism of quasi-Poisson manifolds and representation varieties introduced in \cite{alemeimal,alemeikos}, see also \cite{quisur,quisur2} and specially \cite{geobrasto,quahammer} for fundamental examples in the complex category. 

\subsection{Quasi-Poisson structures on spaces of representations and boundary decorations}\label{subsec:quapoimodspa} Throughout this paper we will be dealing with a connected real or complex Lie group $G$ such that its Lie algebra $\mathfrak{g} $ is equipped with a nondegenerate symmetric Ad-invariant bilinear form $\langle \,,\,  \rangle $ (which is $\mathbb{C}$-bilinear if $G$ is a complex Lie group). Also, we denote by $(\cdot)^\sharp$ the image of an element under the isomorphism $\mathfrak{g}^* \rightarrow \mathfrak{g} $ defined by the pairing $\langle \,,\,  \rangle $. The following definitions make sense in either the smooth or the holomorphic setting, we omit the corresponding adjectives unless they are needed. A \emph{quasi-Poisson manifold} \cite{alemeikos} is a manifold $M$ together with $\pi\in \mathfrak{X}^2(M)$ and an action $\rho:\mathfrak{g}\to TM$ satisfying
 \[ \mathcal{L}_{\rho(u)}\pi=0 \quad \text{and}\quad \frac{1}{2} [\pi,\pi]=\rho(\chi)\quad  \forall u\in \mathfrak{g}; \]
where $\chi\in \wedge^3 \mathfrak{g} $ is defined as $\chi(\xi,\eta,\zeta)=\frac{1}{4}\langle \xi,[\eta^\sharp,\zeta^\sharp] \rangle  $ for all $\xi,\eta,\zeta\in \mathfrak{g}^* $.

The key examples of quasi-Poisson manifolds are the following objects. Let $\Sigma$ be a compact and oriented surface such that each of its components has nonempty boundary and let $V\subset \partial \Sigma$ be a finite set that meets every component of $\Sigma$. In this situation, we call $(\Sigma,V)$ a {\em marked surface}. The flat $G$-bundles over $\Sigma$ equipped with a fixed trivialization over $V$ may be parametrized by the manifold of groupoid morphisms $\hom(\Pi_1(\Sigma,V),G)$, where $\Pi_1(\Sigma,V)\rightrightarrows V$ is the fundamental groupoid of $\Sigma$ based at $V$. 
 
Following \cite{alemeikos,quisur}, we shall describe a distinguished quasi-Poisson structure on $M=\hom(\Pi_1(\Sigma,V),G)$ whose underlying action is the infinitesimal counterpart of the gauge action of $G^V$ on $M$ given by 
\begin{align} (g\cdot \rho)(\gamma )=g_{\mathtt{t}(\gamma)}\rho(\gamma)g_{\mathtt{s}(\gamma)}^{-1}\quad  \forall \rho \in \hom(\pi_1(\Sigma,V),G),\, g\in G^V. \label{eq:gauact} \end{align}   
The corresponding bivector field is denoted by $\pi_{\Sigma,V}$. To describe this bivector field explicitly we use a {\em skeleton} for $(\Sigma,V)$, which is an embedded oriented 1-dimensional CW-complex $\mathfrak{S}  \subset \Sigma$ that satisfies: \begin{itemize}
    \item its set of vertices (0-dimensional cells) is $\mathfrak{S}_0= V$ and
    \item $\Sigma $ retracts to $\mathfrak{S} $ by deformation.
\end{itemize}  
In what follows we view $\mathfrak{S} $ as an oriented graph $(\mathfrak{S}_1,\mathfrak{S}_0)$, where $\mathfrak{S}_1 $ is its set of edges (1-cells) and $\mathtt{S}(\mathbf{e} ), \mathtt{T}(\mathbf{e})\in \mathfrak{S}_0 $ denote the respective source and target of $\mathbf{e} \in \mathfrak{S}_1$. The choice of $\mathfrak{S}$ allows us to identify $M\cong G^{\mathfrak{S}_1 }$. We denote by $Y(\mathbf{e})$ for $Y\in \mathfrak{X}(G)$ the vector field on $M$ which is $Y$ on the factor corresponding to $\mathbf{e} \in \mathfrak{S}_1 $ and $0$ otherwise. For each $v\in V$, the orientation of $\Sigma$ determines a total order on the set of half-edges $\mathfrak{S}(v)$ adjacent to $v$: 
\[ \mathfrak{S}(v)=\{ ({\bf e} ,v,\sigma)\ |\ \text{$\sigma\in\{\mathtt{S},\mathtt{T} \}$ and $\sigma(\mathbf{e})=v$}  \}.\]
One takes the outward direction to $\partial \Sigma$ as reference and then one orders the half-edges in $\mathfrak{S}(v) $ counterclockwise; we denote by $h <h'$ the order relation so established. Let $\{X_i\}_i\subset \mathfrak{g} $ be a basis with $ s_{ij}=\langle X_i,X_j \rangle $ and denote by $s^{ij}$ the coefficients of the matrix inverse to $(s_{ij})_{ij}$. Then the quasi-Poisson bivector field on $M$ is as follows:
\begin{align}  \pi_{\Sigma,V}=-\frac{1}{2} \sum_{v\in V}\sum_{\substack{h<h'\\ h,h'\in \mathfrak{S}(v)}} \sum_{i,j} s^{ij} X_i(h)\wedge X_j(h'), \qquad
 X_i(h)=\begin{cases} -X_i^r(\mathbf{e}), \quad &\text{if $h=(\mathbf{e}, v,\mathtt{T})$}, \\
X_i^l(\mathbf{e}), \quad &\text{if $h=(\mathbf{e}, v,\mathtt{S}) $}; \end{cases} \label{eq:qpoimodspa} \end{align} 
where $X^l,X^r$ are the respective left-invariant and right-invariant vector fields on $G$ determined by $X\in \mathfrak{g} $, see \cite[Equation 14]{quisur}. 
\begin{rema} In the presence of a classical $r$-matrix $r$ on $\mathfrak{g} $ with symmetric part equal to the pairing $\langle\ ,\, \rangle $, the bivector field in \eqref{eq:qpoimodspa} is closely related to the Fock-Rosly Poisson structure \cite{focros}, see \cite[Remark 6]{quisur} and \cite{moufoc} for a detailed comparison. We follow the sign convention used in \cite{quisur2} for \eqref{eq:qpoimodspa}. \end{rema}

\subsubsection{Moment maps and boundary decorations} The induced orientation on the boundary $\partial \Sigma$ of a marked surface $(\Sigma,V)$ serves to define a {\em boundary graph $\Gamma=(E,V) $} in which $V$ is the set of vertices and the set of edges $E$ consists of the negatively oriented boundary arcs in $\partial \Sigma$ lying between pairs of consecutive points in $V$; we follow \cite{quisur} in its convention for this choice of orientation. We shall continue using the notation $M=\hom(\Pi_1(\Sigma,V),G)$ in what follows.

The significance of such a boundary graph is that it serves to define a moment map $\mu: M \rightarrow G^E $ given by $\mu(\rho)=(\rho(e))_{e\in E}$ which is $G^V$-equivariant with respect to \eqref{eq:gauact} and the $G^V$-action on $G^E$ defined by a similar expression
\[ g\cdot (a_e )_{e\in E}=(g_{\mathtt{T}(e)}a_eg_{\mathtt{S}(e)}^{-1} )_{e\in E}. \]Such a moment map makes $M$ into a hamiltonian quasi-Poisson $G^V$-manifold, see \cite{poigromod,quisur2} for details. The key fact for us is that we may use it to perform moment map reduction and thus obtain Poisson quotients. 
\begin{rema}\label{rem:infact} In order to identify $G^V\cong G^E$ we use the identification $E\cong V$ given by $e\mapsto \mathtt{T}(e)$ for all $e\in E$. So if $\partial \Sigma$ consists of a single component, we shall denote the elements of $V$ by $v_i$ and the elements of $E$ as $e_i$ in such a way that $v_i=\mathtt{T}(e_i)$ and $\mathtt{S} (e_i)=v_{i+1}$, where $i=1\dots n$ and $n$ is the number of elements in $V$. In this situation, the corresponding gauge $G^n$-action is
\begin{align} (g_1,\dots,g_n)\cdot (a_1,\dots,a_n)=(g_1a_1g_{2}^{-1},g_{2}a_{2}g_{3}^{-1},\dots,g_na_ng_1^{-1}), \label{eq:gauactdisc} \end{align}  
for all $g_i,a_i\in G$ and $i=1\dots n$.   \end{rema}

Let $\mathfrak{d}=\mathfrak{g} \oplus \mathfrak{g}  $ be the direct sum Lie algebra equipped with the pairing $\langle \ ,\ \rangle $ on the first factor and $-\langle \ , \ \rangle $ on the second one. We say that a subspace $\mathfrak{u}  \subset \mathfrak{d} $ is {\em lagrangian} if $\mathfrak{u} $ is maximally isotropic with respect to this pairing. For example, the diagonal $\mathfrak{g}_\Delta \subset \mathfrak{d}$ is a lagrangian Lie subalgebra and it has a distinguished lagrangian complement $\mathfrak{g}_{-\Delta}=\{u\oplus-u\ |\ u\in \mathfrak{g} \} $. So we may identify $\mathfrak{d} $ with $ \mathfrak{g}_\Delta\oplus \mathfrak{g}_{-\Delta}$ as vector spaces. Furthermore, the pairing gives us another identification $\mathfrak{g}_{-\Delta}\cong \mathfrak{g}^*\cong \mathfrak{g}_\Delta^*$ defined by $u\oplus -u\mapsto 2 \langle u,\ \rangle  $ for all $u\in \mathfrak{g} $. We shall freely use the vector space isomorphism $\mathfrak{d}\cong \mathfrak{g}_{\Delta}\oplus \mathfrak{g}_{\Delta}^*$ thus obtained in what follows.
  
A {\em decoration} of the marked surface $(\Sigma,V)$ is a pair $(H,\A)$, where $H\subset G^E$ is a submanifold and $\A=\prod_{v\in V} \mathfrak{a}_v \subset \mathfrak{d}^V $ is a Lie algebra consisting of a lagrangian Lie subalgebra $\mathfrak{a}_v\subset \mathfrak{d}  $ at each vertex, with the property that the $\A$-action on $G^E$: 
\begin{align*} \mathcal{A}\rightarrow \mathfrak{X}(G^E),\qquad  (X_v\oplus Y_v)_{v\in V}\mapsto (Y_{\mathtt{T}(e)}^r-X_{\mathtt{S}(e)}^l)_{e\in E}\in \mathfrak{X}(G^E),    \end{align*}  
preserves $H$; in other words, for each $h\in H$, the image of the action above is contained in $T_hH$. Our convention in the expression above is that the Lie bracket on a Lie algebra is defined using right invariant vector fields. Let $\mathfrak{g}_\Delta\subset \mathfrak{d}$ be the diagonal inclusion and let $K_v \subset G$ be the connected subgroup integrating $\mathfrak{a}_v\cap \mathfrak{g}_\Delta \subset\mathfrak{g}_\Delta $. Suppose that $\mu$ intersects $H$ cleanly and the action of $K=\prod_{v\in V} K_v \hookrightarrow G^V$ restricted to $\mu^{-1}(H)$ is free and proper, then \cite[Thm. 1.1]{quisur2} implies that there is a canonical Poisson structure on the {\em decorated moduli space} determined by the decoration $(H,\mathcal{A} )$:  
\begin{align} \mathfrak{M}_G(\Sigma,V)_{H,\mathcal{A}}:= \mu^{-1}(H)/K; \label{eq:decmodspa} \end{align}   
This decorated moduli space is symplectic if $V$ intersects every component of $\partial \Sigma$ and if $TH$ is spanned by the $\mathcal{A}$-action on $G^E$. Let $\mathtt{a}_{\Sigma,V}:\mathfrak{g}^V \rightarrow M$ be the infinitesimal counterpart of the gauge action \eqref{eq:gauact}. As shown in \cite[Thm. 1.4]{quisur2}, the Poisson tensor on \eqref{eq:decmodspa} is given by reducing 
\begin{align} \pi_{{\Sigma},{V}  }+\sum_{v\in {V}} \mathtt{a}_{\Sigma,V} (\pi_v) \in \Gamma \left(\wedge^2\left( T\mu^{-1}(H)/ \mathtt{a}_{\Sigma,V}\left(\bigoplus_{v\in V} \mathfrak{g}_\Delta \cap \mathfrak{a}_v   \right)\right) \right),\label{eq:bivmodspa} \end{align}  
where $\pi_v\in \wedge^2 \left(\mathfrak{g}_\Delta/(\mathfrak{g}_\Delta\cap \mathfrak{a}_v )\right)$ is the bivector resulting from dividing the lagrangian subspace $\mathfrak{a}_v\subset \mathfrak{d}\cong \mathfrak{g}_\Delta\oplus \mathfrak{g}_\Delta^* $ by its kernel $\mathfrak{a}_v\cap \mathfrak{g}_\Delta$ as in \cite[Prop. 1.1.4]{coudir}. In the symplectic case, one may describe the symplectic form on $\mathfrak{M}_G(\Sigma,V)_{H,\mathcal{A}}$ directly as the reduction of  a {\em quasi-symplectic 2-form $\Omega_{\Sigma,V}$} on $M$, see \S \ref{subsec:mom map surf}.

It is automatic that $\mathfrak{M}_G(\Sigma,V)_{H,\mathcal{A}}$ is a holomorphic Poisson or symplectic manifold, as long as $G$ is a complex Lie group, $H\subset G^E$ is a complex submanifold and $\mathcal{A}$ is given by complex Lie subalgebras. In the algebraic category, since the $K$-invariant functions on $\mu^{-1}(H)$ form a Poisson algebra, $ \mathfrak{M}_G(\Sigma,V)_{H,\mathcal{A}}$ is a complex algebraic Poisson or symplectic variety, if the quotient $\mu^{-1}(H) \rightarrow \mu^{-1}(H)/K$ is a geometric quotient in the sense of Mumford \cite{mumgit}. 
In the cases that interest us in this work, we obtain geometric quotients because they are given by associated bundles to flag varieties, see Proposition \ref{pro:slim}.

\subsection{Poisson groupoids from symmetric decorations}\label{subsec:poigromod} A Lie groupoid is a groupoid object in the category of smooth manifolds (or complex manifolds or smooth complex algebraic varieties) such that its source map is a submersion. So a Lie groupoid consists of a space of arrows $\mathcal{G} $ and a space of unit elements $M$, denoted together by $\mathcal{G} \rightrightarrows M$. The structure maps are its source, target, multiplication, inversion and unit inclusion maps, respectively denoted by $\mathtt{s}$, $\mathtt{t}$, $\mathtt{m}$ , $\mathtt{i}$ and $\mathtt{u}$ in what follows. To make the notation simpler, we shall also abbreviate $\mathtt{m}(a,b)=ab$ and $\mathtt{i}(g)=g^{-1}$, our convention here is that $\mathtt{s}(a)=\mathtt{t} (b)$. A Poisson groupoid is a Lie groupoid $\mathcal{G} \rightrightarrows M$ equipped with a Poisson structure on $\mathcal{G} $ such that the graph of the multiplication is coisotropic $\text{Graph}(\mathtt{m})=\{(x,y,\mathtt{m}(x,y) )\}\subset \mathcal{G} \times \mathcal{G} \times \overline{\mathcal{G} }    $, where $\overline{\mathcal{G} }$ denotes $\mathcal{G}$ equipped with the opposite Poisson structure \cite{weicoi}. Decorated moduli spaces provide a large source of examples of these objects \cite{poigromod}. 

Take two copies of $\Sigma$ and denote by $i_a:\Sigma \hookrightarrow \Sigma \sqcup \Sigma$ the inclusions for $a=1,2$. The two surfaces can be identified along a collection of arcs $S\subset E$ thus producing a new surface $\widehat{\Sigma}=\Sigma\cup_{S} \Sigma$:
\[ \Sigma\cup_{S} \Sigma= \Sigma\sqcup \Sigma/\sim, \quad i_1(x)\sim i_2(x)\quad  \forall x \in e,\, \forall e\in S. \] 
The orientation of $\widehat{\Sigma}$, is obtained by taking the original orientation on $\Sigma\cong i_1(\Sigma)$ and reversing the orientation of $i_2(\Sigma)$. Consider a set of vertices
\[ \widehat{V}\subset \left[i_1(V)\cup i_2(V) \right]\cap \partial\widehat{\Sigma}. \] 
As long as $(\widehat{\Sigma},\widehat{V}) $ is a marked surface and we choose a decoration $(H,\A)$ which is symmetric with respect to the natural reflection symmetry defined by the doubling (and so, in particular, $\widehat{V}$ should be symmetric):
\[ i_1(x) \mapsto i_2(x), \qquad i_2(x) \mapsto i_1(x), \qquad \forall x\in \Sigma, \] 
the corresponding decorated moduli space $\mathfrak{M}_G(\widehat{\Sigma},\widehat{V})_{H,\mathcal{A}}$ admits a natural Poisson groupoid structure \cite[Thm. 3.5]{poigromod}. The Lie groupoid structure on $\mathfrak{M}_G(\widehat{\Sigma},\widehat{V})_{H,\mathcal{A}}$ descends from a Lie groupoid structure on $\hom(\pi_1((\widehat{\Sigma},\widehat{V}),G)$ over $\hom(\pi_1(\Sigma,V),G)$ that can be described as follows:
\begin{itemize}
    \item restricting a flat $G$-bundle with a framing over ${\widehat{V} }$ to each of the images of the inclusions $i_j(\Sigma,V)\subset (\widehat{\Sigma},\widehat{V})$ defines the source and target maps for $j=1,2$:
    \[ \sss(P)=i_2^*P, \quad \ttt(P)=i_1^*P; \]
    \item two flat framed $G$-bundles $P,P'$ over $(\widehat{\Sigma},\widehat{V} )$ that satisfy $i_2^*P \cong i_1^*P'$ can be glued to a flat framed $G$-bundle $P\circ P'$ over $(\widehat{\Sigma},\widehat{V} )$ such that $i_1^*(P\circ P')\cong i_1^*P$ and $i_2^*(P\circ P')\cong i_2^*P'$, this operation determines a multiplication.
\end{itemize}
The canonical moment map $ \mu:\hom(\pi_1(\widehat{\Sigma},\widehat{V} ),G)\rightarrow  G^{\widehat{ E}}$ is a Lie groupoid morphism with respect to the Lie groupoid structure we have just described, see \cite[Prop. 3.11]{poigromod}. Hence $\mathfrak{M}_G(\widehat{\Sigma},\widehat{V})_{H,\mathcal{A}}$ arises from multiplicative moment map reduction. See \S\ref{subsec:expgro} for explicit fomulae in the cases that are relevant for us and a more detailed review of this general groupoid structure in \S\ref{subsec:mor}. 

\section{Configuration Poisson groupoids of flags as decorated moduli spaces}\label{sec:confla} 
Now we recall the construction of the Poisson groupoids over products of flag varieties defined in \cite{conpoigro} and we put them into the framework of decorated moduli spaces \cite{quisur,quisur2}, thus setting the stage for their explicit integration according to \cite{poigromod}.
\subsection{The standard Poisson-Lie structure on complex semi-simple Lie groups}\label{subsec:st poissson} Let $G$ be a connected complex semi-simple Lie group. Equip $\mathfrak{g} $ with a nondegenerate Ad-invariant symmetric bilinear form $\langle \,,\,  \rangle $ and choose a pair of opposite Borel subgroups $B_+,B_-$ in $G$. Let $\mathfrak{t}$ be the Lie algebra of the maximal torus $T=B_+\cap B_-$. Let $\Delta_+\subset \mathfrak{t}^* $ be the set of positive roots associated to $\mathfrak{t} $. For each $\alpha\in  \Delta_+$, let $E_{\alpha }\in \mathfrak{g}_{\alpha } $ and $E_{-\alpha }\in \mathfrak{g}_{-\alpha }$ be such that $\langle E_{\alpha },E_{-\alpha}  \rangle=1$. The {\em standard quasitriangular $r$-matrix} associated to $\mathfrak{t} $ is
\[ r=\frac{1}{2} \sum_i h_i\otimes h_i +\sum_{\alpha \in \Delta_+} E_{-\alpha } \otimes E_{\alpha }, \]  
where $\{h_i\}_i$ is an orthonormal basis of $\mathfrak{t} $. The {\em standard Poisson-Lie group structure on $G$} is then given by $\pi_G=\Lambda^r-\Lambda^l$, where $\Lambda$ is the skew-symmetric part of $r$. We denote by $N_\pm\subset B_\pm $ the corresponding maximal unipotent subgroups with Lie algebras $\mathfrak{n}_\pm$ and we let $\text{pr}_{\mathfrak{t} }:\mathfrak{b}_\pm=\mathfrak{n}_\pm\oplus \mathfrak{t}   \rightarrow \mathfrak{t} $ be the projections. The Manin triple corresponding to $(G,\pi_G)$ is given by the direct sum $\mathfrak{d}=\mathfrak{g} \oplus \mathfrak{g} $ as the double with the pairing $\text{pr}_1^*\langle \,,\, \rangle - \text{pr}_2^* \langle \,,\, \rangle $, $\mathfrak{g} $ sits inside it as the diagonal lagrangian Lie subalgebra and we may identify ${\mathfrak{g} }^*=\{X\oplus Y\in  \mathfrak{b}_+\oplus \mathfrak{b}_-\ |\ \text{pr}_{\mathfrak{t}}(X)=- \text{pr}_{\mathfrak{t}}(Y)\}$ with the dual Lie algebra.

\subsection{Poisson groupoids over products of flag varieties}\label{subsec:poigrofla} There is an interesting family of Poisson groupoids associated to the standard Poisson group structure on $G$ which was introduced in \cite{conpoigro}. Take the Poisson manifold 
\[ \widetilde{F}_n :=\overbrace{G \times_{B_+}G \times_{B_+} \dots \times_{B_+} G}^{n \text{ times}} \]   
which is the quotient of $G^{n}$ by the $B_+^{n-1}$-action
\[ (b_{1},\dots,b_{n-1})\cdot (g_1,\dots,g_{n}) =(g_1b_{1}^{-1},b_{1}g_{2}b_{2}^{-1},\dots,b_{n-2}g_{n-1}b_{n-1}^{-1},b_{n-1}g_n) \]
for all $b_i\in B_+$ and all $g_i\in G$. The Poisson structure on $\widetilde{F}_n$ is induced by the standard Poisson group structure on $G$ and the fact that $B_+ \subset G$ is a Poisson subgroup, this fact implies that there is a Poisson structure on the quotient $\widetilde{F}_n\cong G^n/B_+^{n-1}$. Consider the residual left $B_+$-action on the last factor of the above product $b\cdot [g_1,\dots,g_n]=[g_1,\dots,g_nb^{-1}]$ and denote its quotient as $ F_n:=B_+\backslash\widetilde{F}_n  $. Then we have that the quotient by the diagonal action $B_+\backslash (\widetilde{F}_{n} \times \overline{\widetilde{F}_n}) $ is isomorphic as a Poisson manifold to $\widetilde{F}_{2n}$ (see Theorem \ref{pro:conflaqpoi} below) and it is a Poisson groupoid over $F_n$. Furthermore, we have that $\widetilde{F}_{n+m}$ serves as a natural bimodule for a Morita equivalence of Poisson groupoids between $\widetilde{F}_{ 2n}$ and $\widetilde{F}_{2m} $. 
\[\begin{tikzcd}
\widetilde{F}_{ 2n} \arrow[d, shift right] \arrow[d, shift left] &  & \widetilde{F}_{n+m}  \arrow[lld] \arrow[rrd] &  & \widetilde{F}_{ 2m} \arrow[d, shift right] \arrow[d, shift left] \\
F_n                                                              &  &                                             &  & F_m.                                                             
\end{tikzcd}\]
We will give more details in Theorem \ref{thm:symdoumor}, where we upgrade this observation to a Morita equivalence of symplectic double groupoids.

\begin{wrapfigure}{r}{5cm}\begin{tikzpicture}[line cap=round,line join=round,>=stealth,x=1cm,y=1cm,scale=0.6]
\clip(6.025310782816655,3.939691237049708) rectangle (15.156000919477579,10.43087314881562);
\draw [line width=1.2pt] (10.038525092765054,7.201928556518199) circle (2.538970219326206cm);

\draw [line width=1pt,dashed,->] (11.536924420615055,9.251603952507534)-- (8.541048363676854,9.25227809268628); 
\draw [line width=1pt,dashed,->] (7.697106682724682,6.2200275412679975)-- (8.541048363676854,9.25227809268628); 
\draw [line width=1pt,dashed,->] (9.951549208037106,4.664448513226991)-- (8.541048363676854,9.25227809268628);   
\draw [line width=1pt,dashed,->] (12.400823407913643,6.271382241243403)-- (8.541048363676854,9.25227809268628);  

\begin{scriptsize}
\draw [fill=black] (11.536924420615055,9.251603952507534) circle (1.2pt);
\draw[color=black] (11.612748030624086,9.530298699432917) node {$v_5$};
\draw [fill=black] (8.541048363676854,9.25227809268628) circle (1.2pt);
\draw[color=black] (8.428123976674794,9.537471276131001) node {$v_1$};

\draw[color=black] (10.063471463837946,8.9) node {$\ee_4$};
\draw [fill=black] (9.951549208037106,4.664448513226991) circle (1.2pt);
\draw[color=black] (9.6,4.9) node {$v_3$};
\draw [fill=black] (7.697106682724682,6.2200275412679975) circle (1.2pt);
\draw[color=black] (7.3,6.396298756143437) node {$v_2$};
\draw [fill=black] (12.400823407913643,6.271382241243403) circle (1.2pt);
\draw[color=black] (12.88062981109225,6.446506793030024) node {$v_4$};

\draw[color=black] (7.9,7.8) node {$\ee_1$};
\draw[color=black] (9.0,6.8) node {$\ee_2$};
\draw[color=black] (10.5,7.4) node {$\ee_3$};
\end{scriptsize}
\end{tikzpicture} \caption{A skeleton of the disc $(\Sigma_4,V_4)$}
  \label{fig:dis}\end{wrapfigure}
 
Another description of the Poisson structure on $F_n$ is as a {\em mixed product} of $n$ copies of $G/B_+$ in the sense of \cite{lumou}, where the flag variety $G/B_+$ is viewed as a Poisson homogeneous space for $G$, see \cite[\S A.2]{conpoigro}.
\subsection{Decorated discs and configuration Poisson groupoids of flags}\label{subsec:decdis} While the construction above relies on quotients of Poisson actions, there is an equivalent quasi-Poisson viewpoint that seems more convenient for the purpose of integration. Let $(\Sigma_k,V_k)$ be the marked surface given by the disc with $k+1$ marked points $v_i$ ordered cyclically on its boundary for $k\geq 2$, i.e. $V_k=\{v_1,\dots,v_{k+1}\}$ and consider the associated space of representations $M_{k}:=\hom(\Pi_1(\Sigma_k,V_k),G)$. We may use the oriented boundary edges defined by $V_k$ as generators of the fundamental groupoid $\Pi_1(\Sigma_k,V_k)$ to identify $M_{k}=\{(a_1,\dots,a_{k+1})\in G^{k+1}\ |\ a_1a_2\dots a_{k+1}=1\}$ and thus the gauge action \eqref{eq:gauact} is as in \eqref{eq:gauactdisc} and the moment map is the inclusion $M_k\subset G^{k+1}$.
Consider the boundary decoration $(H_{k},\mathcal{A}_{k}) $ of $(\Sigma_k,V_k)$ given by decorating the vertices $v_i$ as follows
\begin{align}  {\mathfrak{a} }_{v_i}=\begin{cases} &\widetilde{\mathfrak{b} }_+=\{X\oplus Y\in \mathfrak{b}_+\oplus \mathfrak{b}_+\ | \ X-Y\in \mathfrak{n}_+   \}, \quad \text{if $2\leq i\leq k $}  \\
&{\mathfrak{g} }^*=\{X\oplus Y\in \mathfrak{b}_+\oplus \mathfrak{b}_-\ |\ \text{pr}_{\mathfrak{t}}(X)=- \text{pr}_{\mathfrak{t}}(Y)\}, \quad \text{if $i=1$,} \\
&({\mathfrak{g} }^*)^\vee=\{X\oplus Y\in  \mathfrak{b}_-\oplus \mathfrak{b}_+\ |\ \text{pr}_{\mathfrak{t}}(X)=- \text{pr}_{\mathfrak{t}}(Y)\}, \quad \text{if $i=k+1$;} \end{cases}   \label{eq:surdec}      \end{align}
and all the edges are decorated by $G$ itself so that $H_k=G^{k+1}$. According to this prescription, the endpoints of the edge labelled by $g_{k+1}$ are decorated with copies of $\mathfrak{g}^* $, whereas all the other vertices are decorated with $\widetilde{\mathfrak{b} }_+ $. Note that, by construction, the group $K$ appearing in \eqref{eq:decmodspa} is $B_+^{k-1}$ and hence $\mathfrak{M}_G(\Sigma_k,V_k)_{H_{k},\mathcal{A}_{k}} =M_k/B_+^{k-1}$, where the action is \eqref{eq:gauactdisc} restricted to $\{1\} \times B_+^{k-1}\times \{1\}\subset G^{k+1} $.
\begin{thm}\label{pro:conflaqpoi} The decorated moduli space $\mathfrak{M}_G(\Sigma_k,V_k)_{H_{k},\mathcal{A}_{k}} $ is Poisson isomorphic to $\widetilde{F}_{k} $ by means of the map
\begin{align*}  &\Psi_{k}:\mathfrak{M}_G(\Sigma_k,V_k)_{H_{k},\mathcal{A}_{k}} \rightarrow \widetilde{F}_{k},\\
& [a_i]_{i=1\dots k+1}\mapsto [a_1,a_2,\dots,a_{k-1},a_{k}],\quad \text{$[a_i]_{i=1\dots k+1}\in \mathfrak{M}_G(\Sigma_k,V_k)_{H_{k},\mathcal{A}_{k}}$.}    \end{align*} 
 \end{thm}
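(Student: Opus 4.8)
The plan is to prove the two halves of the statement separately: that $\Psi_k$ is an isomorphism of varieties, and that it is a Poisson map, the second being the essential point. For the underlying isomorphism, note that since $a_{k+1}=(a_1a_2\cdots a_k)^{-1}$ on $M_k$, the projection onto the first $k$ coordinates identifies $M_k\cong G^k$. Restricting the gauge action \eqref{eq:gauactdisc} to $\{1\}\times B_+^{k-1}\times\{1\}$ and writing $(g_2,\dots,g_k)=(b_1,\dots,b_{k-1})$, a direct computation shows that on $(a_1,\dots,a_k)$ this action is
\[ (a_1,\dots,a_k)\mapsto (a_1b_1^{-1},\,b_1a_2b_2^{-1},\,\dots,\,b_{k-2}a_{k-1}b_{k-1}^{-1},\,b_{k-1}a_k), \]
while $a_{k+1}$ is left fixed. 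This is exactly the $B_+^{k-1}$-action defining $\widetilde{F}_k=G^k/B_+^{k-1}$, so the projection descends to $\Psi_k$ and realizes it as a bijection. Freeness and properness of the $K=B_+^{k-1}$-action together with the geometric-quotient statement of Proposition \ref{pro:slim} then upgrade this to an isomorphism of algebraic varieties.

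For the Poisson structure I would invoke the reduction formula \eqref{eq:bivmodspa}. With the fan skeleton of Figure \ref{fig:dis}, whose edges $\ee_1,\dots,\ee_k$ all issue from $v_1$, the vertex $v_1$ is the only one carrying more than one half-edge, so the term $\pi_{\Sigma_k,V_k}$ of \eqref{eq:qpoimodspa} is supported there and assembled from the left-invariant fields $X_i^l(\ee_p)$ in their counterclockwise order. The lagrangian corrections $\sum_v \mathtt{a}_{\Sigma_k,V_k}(\pi_v)$ split according to the decoration: at $v_2,\dots,v_k$ one has $\widetilde{\mathfrak{b}}_+\cap\mathfrak{g}_\Delta=(\mathfrak{b}_+)_\Delta$, which is responsible for the reduction by $B_+^{k-1}$, whereas $\mathfrak{g}^*$ and $(\mathfrak{g}^*)^\vee$ at $v_1$ and $v_{k+1}$ meet $\mathfrak{g}_\Delta$ trivially and hence impose no reduction but do contribute the bivectors $\pi_{v_1},\pi_{v_{k+1}}$ attached to the dual Lie algebra of the standard Manin triple of \S\ref{subsec:st poissson}. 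I would then reduce this sum along the $B_+^{k-1}$-orbits, transport it through $\Psi_k$, and identify it with the mixed-product Poisson structure of $\widetilde{F}_k$.

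The comparison in this last step is where I expect the real difficulty to lie. Conceptually, the two $\mathfrak{g}^*$-type decorations at the endpoints of the distinguished edge $a_{k+1}$ should reconstruct the standard Poisson-Lie bivector $\pi_G=\Lambda^r-\Lambda^l$, while each $\widetilde{\mathfrak{b}}_+$-decoration should implement a single $\times_{B_+}$ fiber product together with its mixed-product cross-terms in the sense of \cite{lumou}. To make this precise I would isolate the contribution of each decoration as a building block and use the compatibility of surface gluing with the fiber-product operation: verifying the base case of a single $\mathfrak{g}^*$-decorated edge against the Poisson-Lie structure on $G$, and then showing that attaching a $\widetilde{\mathfrak{b}}_+$-vertex corresponds to forming the $\times_{B_+}$ product, so that the general case follows by induction on $k$. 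The delicate bookkeeping is to keep track of the counterclockwise half-edge ordering at $v_1$, the sign conventions in \eqref{eq:qpoimodspa}, and the change of coordinates between the boundary holonomies $a_i$ and the skeleton holonomies, so that the cross-terms produced by the reduction land on the correct pair of factors of $\widetilde{F}_k$.
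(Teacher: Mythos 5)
Your variety-level argument is correct and matches the paper's (routine) verification: the identification $M_k\cong G^k$, the computation of the restricted gauge action, and the match with the $B_+^{k-1}$-action defining $\widetilde{F}_k$ are all right. Your setup of the Poisson comparison — the fan skeleton, $\widetilde{\mathfrak{b}}_+\cap\mathfrak{g}_\Delta=(\mathfrak{b}_+)_\Delta$ accounting for the reduction by $B_+^{k-1}$, the trivial intersections $\mathfrak{g}^*\cap\mathfrak{g}_\Delta=0$ at $v_1,v_{k+1}$ contributing $\pi_{v_1}=\Lambda$, $\pi_{v_{k+1}}=-\Lambda$ — is also exactly how the paper's proof begins. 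The gap is in the step you yourself flag as the difficulty: the identification of the reduced bivector with the mixed product structure is never carried out. You replace it with an induction-on-$k$ scheme ("attaching a $\widetilde{\mathfrak{b}}_+$-vertex implements the $\times_{B_+}$ product"), but the two ingredients this scheme needs are precisely what requires proof. First, the gluing operation actually available in the paper (\S\ref{subsec:mor}, \eqref{eq:quomodspa}) deletes vertices adjacent to the glued edges and gauge-reduces there by full copies of $G$, so gluing a triangle onto $(\Sigma_{k-1},V_{k-1})$ does not produce $(\Sigma_k,V_k)$ with the decoration \eqref{eq:surdec}; to make your step precise you would need a fusion-of-marked-points statement combined with the ``sew and decorate in any order'' principle of \cite{quisur2}, \emph{and}, on the target side, a statement that the Lu--Mouquin mixed product on $\widetilde{F}_k$ is obtained by iterating the corresponding Poisson-level operation. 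That last statement is essentially the content of \cite[Prop. A.6]{conpoigro}, so the induction does not bypass the comparison — it presupposes it. Second, the mixed product carries $r$-matrix cross-terms between \emph{every} pair of factors $a<b$, not only consecutive ones; in the skeleton picture these arise simultaneously from the cyclic order of all half-edges at $v_1$, and nothing in your inductive step controls how the new factor acquires cross-terms with all the previous ones at once.

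For contrast, the paper needs no induction: it composes $\Psi_k$ with the explicit Poisson isomorphism $J$ of \cite{lumou} onto $(G/B_+)^{k-1}\times G$, expands $\pi_{\Sigma_k,V_k}+\mathtt{a}_{\Sigma_k,V_k}(\pi_{v_1})+\mathtt{a}_{\Sigma_k,V_k}(\pi_{v_{k+1}})$ in the root basis, and recognizes the three resulting pieces term by term: the mixed term $\text{Mix}(r)$ (matched via \cite[Prop. A.6]{conpoigro}), the single-edge tensors $\frac{1}{2}\sum_{\a\in\Delta_+}E^r_{-\a}(\ee_a)\wedge E^r_\a(\ee_a)$ giving the $G/B_+$ Poisson structures (\cite[Example 6.11]{lumou}), and $\Lambda^r(\ee_k)-\Lambda^l(\ee_k)=\pi_G$ on the last factor. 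If you want to keep your route, the honest fix is either to perform this direct term-by-term computation, or to prove the two fusion-compatibility lemmas described above — at which point you will have essentially reproved \cite[Prop. A.6]{conpoigro} rather than avoided it.
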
  
\begin{rema}\label{rem:poimap} By putting $k=2n$, we obtain, in particular, that the following map is an isomorphism:
\begin{align*}  &\Psi'_n:\mathfrak{M}_G(\Sigma_{2n},V_{2n})_{H_{2n},\mathcal{A}_{2n}} \rightarrow B_+\backslash (\widetilde{F}_{n} \times \overline{\widetilde{F}_n}) \cong \widetilde{F}_{2n} ,\\ 
&[a_i]_{i=1\dots 2n+1}\mapsto [(a_1,a_{2},\dots,a_{n}),(a_{2n}^{-1},a_{2n-1}^{-1},\dots,a_{n+1}^{-1})], \quad \text{$[a_i]_{i=1\dots 2n+1}\in \mathfrak{M}_G(\Sigma_{2n},V_{2n})_{H_{2n},\mathcal{A}_{2n}}$.}   \end{align*} 
In fact, the following map is a Poisson isomorphism
\begin{align*}  &\chi:B_+\backslash(\widetilde{F}_{n} \times \overline{\widetilde{F}_n}) \rightarrow \widetilde{F}_{2n} =\overbrace{G \times_{B_+} G \times_{B_+} \dots \times_{B_+} G}^{2n \text{ times}} \\
&\chi([(g_{1},\dots,g_n),(h_{1},\dots,h_n)])=[g_{1},\dots,g_n,h_n^{-1},h_{n-1}^{-1},\dots,h_{1}^{-1}]; \end{align*}
and so $\Psi'_n= \chi^{-1}\circ \Psi_{2n}$ is a Poisson isomorphism if so is $\Psi_{2n}$.
\end{rema} 
\begin{proof}[Proof of Theorem \ref{pro:conflaqpoi}] It is routine to check that $\Psi_{k}$ is a bijection with smooth inverse. Now we are going to check that it is a Poisson morphism. 
We may compose $\Psi_{k}$ with another Poisson isomorphism which was defined in \cite{lumou}:
\begin{align*} &J:\overbrace{G \times_{B_+} G \times_{B_+} \dots \times_{B_+} G}^{k \text{ times}} \rightarrow   \overbrace{G/B_+ \times G/B_+ \times \dots G/B_+}^{k-1 \text{ times} }\times G, \\
& J([g_1,g_2,\dots, g_{k}])=\left(g_1B_+,g_1g_2B_+,\dots,g_1g_2\dots g_{k-1}B_+,g_1g_2\dots g_k\right)
 \end{align*}   
where the codomain of $J$ is equipped with the mixed product Poisson structure of the homogeneous spaces $G/B_+$ (given by the left cosets of $B_+$) together with $G$ with respect to the residual action of $\mathfrak{g}^{k}$ on the direct product, see \cite[Thm. 8.1]{lumou}. We are going to show that $\Phi=J\circ  \Psi_{k}$ is a Poisson morphism and hence so is $\Psi_{k}$. 
 
In order to describe $\pi_{{\Sigma_k},{V_k} } $, we take a skeleton $\mathfrak{S}=(\mathfrak{S}_1,\mathfrak{S}_0)  $ for $(\Sigma_k,V_k)$. The set of vertices is $\mathfrak{S}_0=V_k$ and we take line segments from $v_{1}$ to all the other vertices in ${V_k}$ as the set of edges $\mathfrak{S}_1=\{\ee_i\}_{i=1\dots k} $. We orient these edges in such a way that their target is $v_{1}$ for all of them. The orientation of $\Sigma_k$ induces an order on ${V_k}$ and hence it also induces an order among the edges in $\mathfrak{S}_1 $. Now using the basis $\{h_i,E_{-\a},E_\a\}_{i,\a}$ of $\mathfrak{g} $ as in \S\ref{subsec:st poissson}, formula \eqref{eq:qpoimodspa} becomes:   
\[ \pi_{\Sigma_k,V_k   }=-\frac{1}{2}\sum_{a<b}\left(\sum_{i}  h_i^r(\ee_a)\wedge h_i^r(\ee_b) +\sum_{\a \in \Delta_+} (E^r_{-\a}(\ee_a) \wedge E^r_\a(\ee_b)+E^r_\a(\ee_a)\wedge E^r_{-\a}(\ee_b)) \right), \] 
where again $X(\ee_k)$ denotes the vector field whose components are $X$ on the $k$'th factor in $G^{\mathfrak{S}_1 }=G^{k}$ and $0$ on all the other factors. Fig.~\ref{fig:dis} represents the skeleton $\mathfrak{S} $ for $k=4$. On the other hand, in our situation, we have that $\pi_v=0$ for $v=v_i$, $i=2\dots k$ and $\pi_{v_1}=\Lambda\in \wedge^2 \mathfrak{g} $, $\pi_{v_{k+1}}=-\Lambda\in \wedge^2 \mathfrak{g} $. Using these observations, we may compute the terms in \eqref{eq:bivmodspa}. We have
\begin{align*} \mathtt{a}_{\Sigma_k,V_k }(\pi_{v_{1}})=&\frac{1}{2}\left(\sum_{a<b}\sum_{\a\in \Delta_+}(E^r_{-\a}(\ee_a) \wedge E^r_\a(\ee_b)-E^r_\a(\ee_a)\wedge E^r_{-\a}(\ee_b))\right.+\\
&\left.+\sum_a\sum_{\a\in\Delta_+}E^r_{-\a}(\ee_a) \wedge E^r_\a(\ee_a) \right). \end{align*} 
Then we obtain:
\begin{align*} \pi_{\Sigma_k,V_k  }+ \mathtt{a}_{\Sigma_k,V_k }(\pi_{v_{1}})=&\overbrace{-\sum_{a<b}\left(\sum_{i}  \frac{1}{2} h_i^r(\ee_a)\wedge h_i^r(\ee_b) +\sum_{\a \in \Delta_+} E^r_{\a}(\ee_a) \wedge E^r_{-\a}(\ee_b)\right)}^{-\text{Mix}(r) } +\\
 &+\frac{1}{2} \sum_a\sum_{\a\in\Delta_+}E^r_{-\a}(\ee_a) \wedge E^r_\a(\ee_a) . \end{align*}
According to \eqref{eq:bivmodspa}, the full bivector field $Q$ on the moduli space is given by
\begin{align*}  Q=\pi_{\Sigma_k,V_k   }+ \mathtt{a}_{\Sigma_k,V_k }(\pi_{v_{1}})+ \mathtt{a}_{\Sigma_k,V_k}(\pi_{v_{k+1}})=&-\text{Mix}(r) +\frac{1}{2} \left(\sum_{1\leq a \leq k-1}\sum_{\a\in\Delta_+}E^r_{-\a}(\ee_a) \wedge E^r_\a(\ee_a)\right)+ \\
&+\Lambda^r(\mathbf{e}_k) -\Lambda^l(\mathbf{e}_k) ; \end{align*} 
but $\frac{1}{2} \sum_{\a\in\Delta_+}E^r_{-\a}(\ee_a) \wedge E^r_\a(\ee_a)$ gives us the Poisson tensor on the $a$-th factor isomorphic to $G/B_+$ (see \cite[Example 6.11]{lumou}), $\Lambda^r(\mathbf{e}_k) -\Lambda^l(\mathbf{e}_k)\cong\pi_G $ and $\text{Mix}(r)$ is the mixed product term determined by $r$ as a quasi-triangular $r$-matrix, see \cite[Prop. A.6]{conpoigro}. By expressing a representation in terms of its values $x_i$ on the elements of $\mathfrak{S}_1 $, we have that $\Phi$ simplifies as follows: 
	\[ \Phi([x_1,x_2,\dots,x_{k}])=(x_{1}B_+,x_{2}B_+,\dots,x_{k-1}B_+,x_{k}). \] 
Therefore, we see that the pushforward of $Q$ by the map $\Phi$ is the opposite of the mixed product Poisson structure appearing in \cite[Prop. A.6]{conpoigro} on $(G/B_+)^{k-1}\times G$. Since our choice of $\pi_G$ differs by a sign from the one in \cite{conpoigro}, so does the Poisson tensor on $\widetilde{F}_k$, and therefore $\Phi$ is Poisson. \end{proof}
\begin{rema}\label{rem:base F2n} The Poisson groupoid structure $\widetilde{F}_{2k} \rightrightarrows F_k$ can be regarded as arising from doubling a disc with $k+1$ marked points and applying \cite[Thm. 3.5]{poigromod}: so $\Sigma_k=\Sigma_{0,k}\cup_e \Sigma_{0,k}$ where $(\Sigma_{0,k},V_{0,k})$ is the disc with $k+1$ marked points and $e$ is one of the boundary edges. Then $V$ is obtained by taking the image of $V_{0,k}\sqcup V_{0,k} \subset \Sigma$ and forgetting only one of the vertices adjacent to $e$. The decoration $(H_{0,k},\mathcal{A}_{0,k})$ of $(\Sigma_{0,k},V_{0,k})$ is then given by
\begin{align}  {\mathfrak{a} }_{v_i}=\begin{cases} &\widetilde{\mathfrak{b} }_+=\{X\oplus Y\in \mathfrak{b}_+\oplus \mathfrak{b}_+\ |\ X-Y\in \mathfrak{n}_+   \}, \quad \text{if $2\leq i\leq k+1 $}  \\
&{\mathfrak{g} }^*=\{X\oplus Y\in \mathfrak{b}_+\oplus \mathfrak{b}_-\ | \ \text{pr}_{\mathfrak{t}}(X)=- \text{pr}_{\mathfrak{t}}(Y)\}, \quad \text{if $i=1$;} \end{cases} \label{eq:dec halfsur}        \end{align} and decorating all the boundary edges with $G$. Similar considerations to those in Theorem \ref{pro:conflaqpoi} imply that the map 
\[ \mathfrak{M}_G(\Sigma_{0,k},V_{0,k})_{H_{0,k},\mathcal{A}_{0,k}} \rightarrow F_k, \qquad [a_i]_{i=1\dots k+1} \mapsto [a_1,\dots,a_{k-1},a_{k}] \]
is a Poisson isomorphism.
\end{rema} 
 
Take the following decoration $H_k^-$ in which only the edge going from $v_1$ to $v_{k+1}$ of the disc $(\Sigma_k,V_k)$ is decorated with $B_-$ and all the others are decorated with $G$. Then $\mathfrak{M}_G(\Sigma_k,V_k)_{H_{k}^-,\mathcal{A}_{k}}$ for $k=2n$ becomes a wide Poisson subgroupoid of $\widetilde{F}_{2n} \rightrightarrows F_n$ via the map $\Psi_n'$ of Remark \ref{rem:poimap}:
\begin{align} (\mathfrak{M}_G(\Sigma_{2n},V_{2n})_{H_{2n}^-,\mathcal{A}_{2n}} \rightrightarrows \mathfrak{M}_G(\Sigma_{0,2n},V_{0,2n})_{H_{0,2n},\mathcal{A}_{0,2n}} )\cong (\Gamma_{2n} \rightrightarrows F_n) \hookrightarrow (\widetilde{F}_{2n} \rightrightarrows F_n); \label{eq:confla}\end{align} 
where $\Gamma_{2n} \rightrightarrows F_n$ is the {\em $n$th total configuration Poisson groupoid of flags of $G$} introduced in \cite{conpoigro}. More generally, we denote by $\Gamma_k$ the decorated moduli space $\mathfrak{M}_G(\Sigma_k,V_k)_{H_{k}^-,\mathcal{A}_{k}}$. 

Let $W$ be the Weyl group of $(G,T)$ and consider $\mathbf{u}=(u_1,\dots,u_n)\in W^n$. The corresponding {\em generalized Schubert cell} \cite{conpoigro} is the submanifold of $F_n$
\[ \mathcal{O}^{\mathbf{u} }=B_+u_1B_+ \times_{B_+}\dots \times_{B_+}B_+u_nB_+/B_+; \]
the restriction of $\Gamma_{2n}$ to $\mathcal{O}^{\mathbf{u} }$, denoted by $\Gamma^{(\mathbf{u},\mathbf{u}^{-1})  } \rightrightarrows \mathcal{O}^{\mathbf{u} }$, is called the {\em special configuration Poisson groupoid of flags of $G$} \cite{conpoigro}. Based on Theorem \ref{pro:conflaqpoi}, these groupoids admit a moduli-theoretical interpretation. Namely, the decoration that determines $\Gamma^{(\mathbf{u},\mathbf{u}^{-1})  }$ is given as follows:
\begin{align} \Gamma^{(\mathbf{u},\mathbf{u}^{-1})  }\cong \mathfrak{M}_G(\Sigma_{2n},V_{2n})_{H'_{2n},\mathcal{A}_{2n}}  ,\qquad H_i'= \begin{cases} &B_+u_iB_+, \quad \text{if $1\leq i\leq n $}  \\
&B_+u_{2n+1-i}^{-1}B_+, \quad \text{if $n+1\leq i\leq 2n$,} \\
&B_-, \quad \text{if $i=2n+1$;} \end{cases}   \label{eq:spconfladec}      \end{align}
where $H'_{2n}=\prod_i H_i'$ and $\mathcal{A}_{2n} $ is given as before in \eqref{eq:surdec}. Here we use the bijection between vertices and boundary edges described in Remark \ref{rem:infact}. 

According to \cite[Prop. 3.15]{poigromod}, we obtain a number of symplectic groupoids by looking at the restricted groupoid structures and induced symplectic forms on the symplectic leaves passing through the unit submanifold of a Poisson groupoid that is constructed via decorated moduli spaces as in Theorem \ref{pro:conflaqpoi}. This observation contextualizes some of the results in \cite{conpoigro}, see \S\ref{subsec:symfol} for more details.

\section{Decorated moduli spaces and Morita equivalences over flag varieties}\label{sec:morequ} Based on Theorem \ref{pro:conflaqpoi}, we may integrate the Poisson structure on $\widetilde{F}_k$ by applying \cite[Thm. 3.5]{poigromod} to a suitable surface. Let $(\Sigma_k,V_k)$ be a disc with $k+1>2$ marked points $V_k=\{v_i\}_{i=1\dots k+1}$. Since $(\Sigma_k,V_k)$ is the underlying marked surface that produces $\widetilde{F}_k $, now we need to construct a new marked surface that serves as its double. Let $(\widehat{\Sigma_k},\widehat{V_k})$ be the marked surface obtained by gluing two copies of $(\Sigma_k,V_k)$ as follows. For every boundary edge $e$ in a boundary component, an arc in $e$ is chosen in such a way that it does not contain any marked points and the two copies of $\Sigma_k$ are glued along each of these arcs. Therefore, $(\widehat{\Sigma_k},\widehat{V_k})$ is a sphere with $k+1$ holes and two points on each boundary component, see Figure \ref{fig:dousur1}. For every $v\in V_k$, we denote by $\{\widehat{v},\widehat{v}'\}$ the induced pair of marked points in $(\widehat{\Sigma_k},\widehat{V_k})$ as in Figure \ref{fig:dousur1}. The groupoid structure
\begin{align}   \hom(\Pi_1(\widehat{\Sigma_k},\widehat{V_k}),G) \rightrightarrows \hom(\Pi_1(\Sigma_k,V_k),G)\label{eq:dousurdousym} \end{align} 
is described in \S\ref{app} as a particular case of a general construction. In what follows, we denote by $\widehat{E_k}$ the set of edges of the boundary graph $\widehat{\Gamma_k}$ of $(\widehat{\Sigma_k},\widehat{V_k})$. 
\begin{figure}
    \centering
   \begin{tikzpicture}[line cap=round,line join=round,>=stealth,x=1cm,y=1cm,scale=0.9]
\clip(2.8225235324454787,4.2) rectangle (9.130460334277112,8.856363326605644);
\draw [rotate around={3.2245226065199:(7.561246716552349,5.478466971264253)},line width=1.5pt] (7.561246716552349,5.478466971264253) ellipse (0.7697377303568282cm and 0.6510864772288915cm);
\fill[rotate around={3.2245226065199:(7.561246716552349,5.478466971264253)},line width=1pt,fill=black,fill opacity=0.1] (7.561246716552349,5.478466971264253) ellipse (0.7697377303568282cm and 0.6510864772288915cm);
\draw [rotate around={-2.602562202499796:(4.3612203385359996,5.409097779036143)},thick,line width=1.5pt] (4.3612203385359996,5.409097779036143) ellipse (0.8318289364893398cm and 0.6582160017758658cm);
\fill[rotate around={-2.602562202499796:(4.3612203385359996,5.409097779036143)},line width=1pt,fill=black,fill opacity=0.1] (4.3612203385359996,5.409097779036143) ellipse (0.8318289364893398cm and 0.6582160017758658cm);
\draw [rotate around={-0.20187005254749543:(5.5480200950008705,7.722016557938077)},thick,line width=1.5pt] (5.5480200950008705,7.722016557938077) ellipse (0.8215595374952916cm and 0.5037570462220995cm);
\fill[rotate around={-0.20187005254749543:(5.5480200950008705,7.722016557938077)},line width=1pt,fill=black,fill opacity=0.1] (5.5480200950008705,7.722016557938077) ellipse (0.8215595374952916cm and 0.5037570462220995cm);
\draw [shift={(2.8715863676840603,7.522299458616793)},line width=1pt]  plot[domain=-1.0361654954648998:0.09396332020550419,variable=\t]({1*1.8634558487075197*cos(\t r)+0*1.8634558487075197*sin(\t r)},{0*1.8634558487075197*cos(\t r)+1*1.8634558487075197*sin(\t r)});
\draw [shift={(8.26600990963108,7.953337640480714)},line width=1pt]  plot[domain=3.2495631571214814:4.4728897144745305,variable=\t]({1*1.9087676962775195*cos(\t r)+0*1.9087676962775195*sin(\t r)},{0*1.9087676962775195*cos(\t r)+1*1.9087676962775195*sin(\t r)});
\draw [shift={(6.054064011776575,2.5915603396976916)},line width=1pt]  plot[domain=1.1302293826980458:2.062369914000789,variable=\t]({1*2.5746147204982144*cos(\t r)+0*2.5746147204982144*sin(\t r)},{0*2.5746147204982144*cos(\t r)+1*2.5746147204982144*sin(\t r)});
\draw [shift={(3.3185760051783624,7.413923292728171)},dotted,->,line width=1pt]  plot[domain=5.501080088050842:6.194695239940627,variable=\t]({1*2.036561033253233*cos(\t r)+0*2.036561033253233*sin(\t r)},{0*2.036561033253233*cos(\t r)+1*2.036561033253233*sin(\t r)});
\draw [shift={(7.5307385649203535,8.739576574139253)},dotted,->,line width=1pt]  plot[domain=3.745408149494752:4.634210293281638,variable=\t]({1*2.651854164024706*cos(\t r)+0*2.651854164024706*sin(\t r)},{0*2.651854164024706*cos(\t r)+1*2.651854164024706*sin(\t r)});
\draw [shift={(6.5165211054636725,-4.444387086818635)},dotted,->,line width=1pt]  plot[domain=1.4943713557185048:1.737436256473725,variable=\t]({1*10.569405039806854*cos(\t r)+0*10.569405039806854*sin(\t r)},{0*10.569405039806854*cos(\t r)+1*10.569405039806854*sin(\t r)});
\begin{scriptsize}
\draw [fill=black] (5.522384437828968,8.22558660947952) circle (1.5pt);
\draw[color=black] (5.51410459063444,8.5) node {$\widehat{v}_1$};
\draw [fill=black] (4.7633763633297646,5.9786070323691325) circle (1.5pt);
\draw[color=black] (4.6,5.7) node {$\widehat{v}_2'$};
\draw [fill=black] (5.347797926123081,7.233887139115374) circle (1.5pt);
\draw[color=black] (5.275610066491114,7.5) node {$\widehat{v}_1'$};
\draw[color=black] (5.085787894213772,6.7) node {$g_1$};
\draw [fill=black] (7.323501478998941,6.094166209161958) circle (1.5pt);
\draw[color=black] (7.4,5.8) node {$\widehat{v}_3'$};
\draw[color=black] (6.107907283399454,6.7) node {$g_3$};
\draw[color=black] (5.747731879591166,6.25) node {$g_2$};
\draw [fill=black] (4.373562714203231,4.750491006354823) circle (1.5pt);
\draw[color=black] (4.355702616224002,4.5) node {$\widehat{v}_2$};
\draw [fill=black] (7.524738680638493,4.8272370621510365) circle (1.5pt);
\draw[color=black] (7.558343369005803,4.5) node {$\widehat{v}_3$};
\end{scriptsize}
\end{tikzpicture}\begin{tikzpicture}[line cap=round,line join=round,>=stealth,x=1cm,y=1cm,scale=0.3]
\clip(-0.29689350774554574,0.8) rectangle (9.838699598877637,11.7);
\draw [rotate around={2.4681179138281935:(4.897030347083754,7.969806602197145)},line width=1pt] (4.897030347083754,7.969806602197145) ellipse (2.5463628795096085cm and 2.4962738173387544cm);
\draw [dotted,line width=1pt,->] (4.7541636680191385,10.465)-- (4.8541636680191385,10.465744956679437);
\draw [dotted,line width=1pt,->] (4.9653017228604085,5.473583249803503)-- (4.8653017228604085,5.473583249803503);
\begin{scriptsize}
\draw [fill=black] (4.8541636680191385,10.465744956679437) circle (1.5pt);
\draw[color=black] (4.865265084346218,11.2) node {$\widehat{v}'$};
\draw[color=black] (2.865265084346218,8.1) node {$a$};
\draw[color=black] (6.865265084346218,8.1) node {$b$};
\draw [fill=black] (4.8653017228604085,5.473583249803503) circle (1.5pt);
\draw[color=black] (5.020684912925324,6.1) node {$\widehat{v}$};
\end{scriptsize}
\end{tikzpicture} 
    \caption{Left: The double surface $(\widehat{\Sigma_2},\widehat{V_2})$ corresponding to the disc with three marked points $(\Sigma_2,V_2)$ determines an integration of $\widetilde{F}_2$. Right: A boundary component decorated with $(a,b^{-1})\in L_v$}
    \label{fig:dousur1}
\end{figure}  
Consider the following decoration of $(\widehat{\Sigma_k},\widehat{V_k})$. The vertices $\{\widehat{v}_i,\widehat{v}_i'\}$ are decorated by:
\[ \mathfrak{l}_{\widehat{v}_i}=\mathfrak{l}_{v_i}, \qquad \mathfrak{l}_{\widehat{v}_i'}=\mathfrak{l}_{v_i}^\vee;  \]
where $\mathfrak{l}_{v_i}^\vee=\{(Y\oplus X)\ |\ X\oplus Y\in \mathfrak{l}_{v_i}\}$ and $\mathfrak{l}_{v_i}$ is as in \eqref{eq:surdec}, determining the Lie algebra $\widehat{\mathcal{A}_k} =\bigoplus_{i=1}^{k+1}(\mathfrak{l}_{\widehat{v}_i} \oplus \mathfrak{l}_{\widehat{v}_i'})\subset \mathfrak{d}^{2k+2}  $. Let $\mathcal{L}_k \subset G^{\widehat{E} }$ be the orbit of $\widehat{\mathcal{A}_k} $ that passes through the unit $1\in G^{\widehat{E} }$. To simplify notation, we will denote by 
\begin{align}  \mathcal{G}_{k} \rightrightarrows \widetilde{F}_{k} \label{eq:sym gro dou sur} \end{align}  
the symplectic groupoid structure on $\mathfrak{M}_G(\widehat{\Sigma_k},\widehat{ V_k})_{\mathcal{L}_k,\widehat{\mathcal{A}_k}   }$ constructed in the next proposition.

\begin{prop}\label{pro:intcon} We have that $\mathcal{G}_k  \rightrightarrows \widetilde{F}_{k}$ is a complex algebraic symplectic groupoid that integrates the Poisson structure on $\widetilde{F}_{k}$ introduced in \S \ref{subsec:poigrofla}. 
 \end{prop}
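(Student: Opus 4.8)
The plan is to verify that $\mathcal{G}_k = \mathfrak{M}_G(\widehat{\Sigma_k}, \widehat{V_k})_{\mathcal{L}_k, \widehat{\mathcal{A}_k}}$ is (a) a symplectic manifold, (b) carries a groupoid structure over $\widetilde{F}_k$, and (c) that this groupoid integrates the Poisson structure on $\widetilde{F}_k$ established in Theorem~\ref{pro:conflaqpoi}. The core idea is that the doubling construction of \S\ref{subsec:poigromod} applies verbatim to the disc $(\Sigma_k, V_k)$, so most of the work is checking that our specific decoration satisfies the hypotheses of the general machinery of \cite[Thm.~3.5]{poigromod} and \cite[Thm.~1.1, Thm.~1.4]{quisur2}.

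First I would verify that $(\widehat{\Sigma_k}, \widehat{V_k}, \mathcal{L}_k, \widehat{\mathcal{A}_k})$ is a legitimate \emph{symmetric} decoration so that \cite[Thm.~3.5]{poigromod} produces a Poisson groupoid over the base $\mathfrak{M}_G(\Sigma_k, V_k)_{H_k, \mathcal{A}_k} \cong \widetilde{F}_k$. Here $\widehat{\Sigma_k}$ is the double obtained by gluing two copies of the disc along arcs in each boundary edge, and symmetry of the decoration follows because $\mathfrak{l}_{\widehat{v}_i'} = \mathfrak{l}_{v_i}^\vee$ is exactly the reflection $X \oplus Y \mapsto Y \oplus X$ of $\mathfrak{l}_{\widehat{v}_i}$, which is the reflection interchanging the two copies of $\Sigma_k$. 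I would confirm that each $\mathfrak{l}_{v_i}$ (hence each $\mathfrak{l}_{v_i}^\vee$) is lagrangian in $\mathfrak{d}$ — this is immediate for $\widetilde{\mathfrak{b}}_+$, $\mathfrak{g}^*$ and $(\mathfrak{g}^*)^\vee$ by the descriptions in \eqref{eq:surdec} — and that the decorating submanifold $\mathcal{L}_k$, being the $\widehat{\mathcal{A}_k}$-orbit through $1 \in G^{\widehat{E}}$, automatically has tangent space spanned by the $\widehat{\mathcal{A}_k}$-action. This last point is precisely the condition in the paragraph after \eqref{eq:decmodspa} guaranteeing that the decorated moduli space is \emph{symplectic}, not merely Poisson.

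Next I would check the regularity hypotheses: that $\mu$ intersects $\mathcal{L}_k$ cleanly and that the induced $K$-action is free and proper, so the quotient \eqref{eq:decmodspa} is a geometric quotient and $\mathcal{G}_k$ is a smooth complex algebraic symplectic variety. As the excerpt signals, freeness and properness here reduce to the fact that the relevant quotients are associated bundles to flag varieties (Proposition~\ref{pro:slim}), so I would invoke that proposition to get the geometric-quotient statement in the algebraic category; the subgroups $K_v$ integrating $\mathfrak{l}_{v_i} \cap \mathfrak{g}_\Delta$ are Borel/unipotent pieces whose actions are standard. The symplectic groupoid structure itself is then the one described in \S\ref{subsec:poigromod}: source/target by restriction to the two halves $i_j^*P$, multiplication by gluing bundles, with $\mu$ a groupoid morphism by \cite[Prop.~3.11]{poigromod}. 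That the Poisson structure on the base $\widetilde{F}_k$ recovered this way is the correct one is exactly Theorem~\ref{pro:conflaqpoi}, so the two halves fit together by design.

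The main obstacle I anticipate is the \emph{integration} claim proper: showing that the symplectic groupoid $\mathcal{G}_k$ induces on its base $\widetilde{F}_k$ precisely the Poisson structure of \S\ref{subsec:poigrofla}, with the correct (not opposite) sign, and that $\mathcal{G}_k$ is source-simply-connected or at least that the induced Poisson structure matches. The general theory of \cite[Thm.~3.5]{poigromod} guarantees that a symmetric decorated double produces a \emph{symplectic groupoid integrating the base Poisson groupoid}, so the abstract integration is free; the delicate point is bookkeeping the orientation reversal on $i_2(\Sigma_k)$ (which is what makes the base Poisson rather than the total space) together with the sign conventions in \eqref{eq:qpoimodspa} and in the decorations $\mathfrak{g}^*$ versus $(\mathfrak{g}^*)^\vee$ at the two distinguished vertices $v_1, v_{k+1}$. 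I would address this by tracing the reduction formula \eqref{eq:bivmodspa} restricted to the sub-decoration coming from one copy $i_1^*$, comparing with the computation already carried out in the proof of Theorem~\ref{pro:conflaqpoi}, and checking that the reflected decoration $\mathfrak{l}_{v_i}^\vee$ contributes the opposite bivector on the $i_2$-copy, so that the net Poisson structure on the unit space is $\pm$ the one on $\widetilde{F}_k$ with the sign dictated by the groupoid (source-minus-target) convention. Assuming the general doubling theorem, this reduces to a finite, structural sign verification rather than a new computation.
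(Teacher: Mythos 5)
Your proposal is correct and follows essentially the same route as the paper: both apply the doubling machinery of \cite{poigromod} to $(\widehat{\Sigma_k},\widehat{V_k})$, reduce the problem to checking smoothness/principality of the lifted gauge action on the level set $\mu^{-1}(\mathcal{L}_k)$ (using that $\mu$ is a groupoid morphism, \cite[Prop.~3.11]{poigromod}), identify the base Poisson structure via Theorem \ref{pro:conflaqpoi}, and obtain algebraicity from Proposition \ref{pro:slim} together with the explicit symplectic form \eqref{eq:polwie}. The sign bookkeeping you flag as the main obstacle is in the paper absorbed into the cited general results (\cite[Prop.~3.15, Prop.~3.8]{poigromod}), so no separate verification is carried out there.
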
  
\begin{rema}\label{rem:dec dou sur} The orbit $\mathcal{L}_k $ can be described explicitly as follows. We label the edges of a boundary component as in Fig.~\ref{fig:dousur1}. Then the decoration demands that $(a,b^{-1})$ lie in suitable subgroups of $G \times G$. 
Take the lagrangian subgroup $L \subset (G \times {G})^V =(G \times G)^{k+1} $ as follows: $L=\prod_{i=1}^{k+1} L_{v_i}$, where $L_{v_i}$ is the integration of $\mathfrak{l}_{v_i}$ as in \eqref{eq:surdec} defined by
\begin{align*}  L_{v_i}=\begin{cases} &\widetilde{B }_+=\{(a, b)\in B_+\times  B_+\ | \ ab^{-1}\in N_+   \}, \quad \text{if $2\leq i\leq k $}  \\
&G^*=\{(a, b)\in B_+ \times B_-\ |\ \text{pr}_T(a) \text{pr}_{T}(b)=1\}, \quad \text{if $i=1$,} \\
&(G^*)^\vee=\{(a,b)\in G^2\ |\ (b,a)\in G^*\}, \quad \text{if $i=k+1$;} \end{cases}    \end{align*}
where $\text{pr}_{T}:B_\pm \rightarrow T$ are the quotient maps. So we have that the map $L \rightarrow G^{\widehat{E_k} }=(G \times G)^{k+1}$ given by $(a_i,b_i)_i\mapsto (a_i,b_i^{-1})_i$ embeds $L$ as $\mathcal{L}_k $.
\end{rema} 
\begin{proof}[Proof of Proposition \ref{pro:intcon}] 
We can see that the moment map $\mu:\hom(\Pi_1(\widehat{\Sigma_k},\widehat{V_k}),G) \rightarrow( G \times G)^{k+1}$ is  a Lie groupoid morphism, where the target is regarded as a product of Lie groups and on each factor $G \times G$, the second copy is equipped with the opposite group structure, see \cite[Prop. 3.11]{poigromod}. 

The smoothness of this moduli space follows from the definitions introduced in the appendix. Let $\mathcal{E}$ be the Lie groupoid introduced at the beginning of \S\ref{subsec:expgro} with moment map $\mu$ \eqref{eq:mommapgro}. Let $\mathcal{E}'= \mathcal{E}\times_{G \times G} \widetilde{B}_+$ be the fibred product with respect to the last two components of $\mu$. This corresponds to decorating the outer boundary of the annulus in Fig.~\ref{fig:d(g)} with $\widetilde{B}_+$. Take the quotient of $\mathcal{E}'$ corresponding to the gauge action of $B_+^2$ at the two marked points on the decorated boundary. Such a quotient is isomorphic to $\mathcal{G}'= G \times (G \times N_+)/B_+$, where $(G \times N_+)/B_+$ is the quotient of $G \times N_+$ by the $B_+$-action $b\cdot (g,u)=(gb^{-1},bub^{-1})$. The isomorphism is as follows
\[ [a_1,a_2,g_1,g_2]\mapsto (g_1,[a_1,g_2a_2^{-1}g_1^{-1}a_1 ]). \] 
We have a residual moment map $\mu':\mathcal{G}'\rightarrow G \times G$ defined by $(a,[g,u])\mapsto (a,gug^{-1}a)$. Note that $\mathcal{G}'$ may be viewed as the moduli space of an annulus with unipotent monodromy at the outer boundary and two marked points on the other. Taking the fibred product with respect to the components of the moment map $\mu'$, which are pairwise transverse over $G$, leads to
\[ \mathcal{E}_{k-1}= \overbrace{\mathcal{E}'\times_G \mathcal{E}'\dots \times_G\mathcal{E}'}^{\text{$k-1$ times} }; \] 
that corresponds to gluing $k-1$ such annuli along their boundary arcs to create a sphere with $k$ holes, one with two marked points and no constraints and the others with unipotent monodromy around them. Finally, take the fibred product $\mathcal{M}_k= \mathcal{E}_{k-1} \times_G \mathcal{E}$ which corresponds to adding another hole with two marked points to such a sphere. Note that the moment map of $\mathcal{M}_k$ has target $(G \times G)^2$ and it restricts to a surjective submersion onto the respective two diagonals in the two copies of $G \times G$. So decorating these free two boundaries with $G^*$ and $(G^* )^\vee$, which are Lie subgroups transverse to the diagonals, gives rise to a smooth fibred product, which is the desired $\mathfrak{M}_G(\widehat{\Sigma_k},\widehat{ V_k})_{\mathcal{L}_k,\widehat{\mathcal{A}_k}   }$.   

It follows that $\mathfrak{M}_G(\widehat{\Sigma_k},\widehat{ V_k})_{\mathcal{L}_k,\widehat{\mathcal{A}_k}   }$ is a symplectic groupoid that integrates $\widetilde{F}_k$, according to \cite[Prop. 3.15, Prop. 3.8]{poigromod}. The fact that it is a complex algebraic variety with algebraic structure maps follows from Proposition \ref{pro:slim}; its symplectic form is obtained from \eqref{eq:polwie} and hence it is also algebraic, see \S\ref{subsec:mom map surf}. \end{proof} 

\subsection{Morita equivalences of symplectic double groupoids}\label{sec:dousym} 
Consider a groupoid object in the category of Lie groupoids. Such a structure is denoted by a diagram of the following kind
\begin{align}   \xymatrix{ \mathcal{G}  \ar@<-.5ex>[r] \ar@<.5ex>[r]\ar@<-.5ex>[d] \ar@<.5ex>[d]& \mathcal{H}   \ar@<-.5ex>[d] \ar@<.5ex>[d] \\ \mathcal{K}   \ar@<-.5ex>[r] \ar@<.5ex>[r] & M, } \label{eq:dousym} \end{align}   
where each of the sides represents a groupoid structure and the structure maps of $\mathcal{G}  $ over $\mathcal{H} $ are groupoid morphisms with respect to $\mathcal{G}  \rightrightarrows \mathcal{K} $ and  $\mathcal{H}  \rightrightarrows M$. A {\em double Lie groupoid} is an object as above such that the map $(\mathtt{s}^V,\mathtt{s}^H):  \mathcal{G}  \rightarrow \mathcal{K}  \times_{M} \mathcal{H}  $ is a submersion. The superindices $\quad^H,\quad^V$ denote, respectively, the groupoid structures $\mathcal{G}  \rightrightarrows \mathcal{H} $ and $\mathcal{G}  \rightrightarrows \mathcal{K} $ \cite{browmac,macdou} and $\mathcal{K} \rightrightarrows M$, $\mathcal{H} \rightrightarrows M$ are called the {\em side groupoids} of $\mathcal{G} $.
\begin{defi}[\cite{weicoi}] A symplectic double groupoid is a double Lie groupoid as in \eqref{eq:dousym} equipped with a symplectic form on $\mathcal{G} $ which is multiplicative, with respect to both vertical and horizontal groupoid structures. \end{defi}
\begin{exa}[Double pair groupoid] One of the simplest examples of double (symplectic) groupoids is the {\em double pair groupoid} associated to a manifold $M$. We consider $M^4$ equipped with the two multiplications 
\begin{align*} &\mathtt{m}^{H}((a,b,c,d),(c,d,a',b'))= (a,b,a',b') \\
&\mathtt{m}^{V}((a,b,c,d),(b,x,d,y))= (a,x,c,y) \end{align*}
for all $a,b,c,d,a',b',x,y\in M$. They define the horizontal and vertical groupoid structures $M^4 \rightrightarrows  M^2$, so both side groupoids are given by the pair groupoid over $M$. If $(M,\omega )$ is symplectic, the symplectic form $\omega \times (-\omega ) \times (-\omega ) \times \omega  $ makes $M^4$ into a symplectic double groupoid. \end{exa} 
 
In order to define the concept of symplectic Morita equivalences we need to recall groupoid actions. A Lie groupoid $G \rightrightarrows M$ acts on a map $j:S \rightarrow M$ on the left (respectively, on the right) if there is a map $G_{\mathtt{s} }\times_j S \rightarrow S$ (resp. $S_j \times_{\mathtt{t} } G \rightarrow S$) satisfying $\mathtt{u}(j(s))\cdot s=s$ and $g\cdot (h\cdot s)=(gh)\cdot s$ for all suitable $g,h\in G$ and $s\in S$; here we are denoting by $(g,s)\mapsto g\cdot s$ such an action map. The conditions for a right action are analogous. 

Take a Lie groupoid $G \rightrightarrows M $ acting on the left (right) on a map $j:S \rightarrow M$. The associated \emph{action groupoid} $G \ltimes S \rightrightarrows S$ (respectively, $S \rtimes G \rightrightarrows S$) has $G \ltimes S:=G \times_M S=G_\mathtt{s}\times_j S$ (respectively, $S \rtimes G:=S_j\times_\mathtt{t} G$) as the space of arrows and $S$ as the space of objects. The source is the projection to $S$ and the target is the action map $G_\mathtt{s}\times_j S\rightarrow S$. The multiplication is $\mathtt{m}( (a,b\cdot x),(b,x))=(ab,x)$ (respectively, $\mathtt{m}( (x,a),(x\cdot a,b))=(x,ab)$), the unit map $S\rightarrow G\ltimes S$ is the inclusion $x\mapsto (\mathtt{u}(j(x)),x)$ and the inversion map is $(a,x)\mapsto (a^{-1},a\cdot x)$. If $G \rightrightarrows M $ is a symplectic groupoid and $S$ is a symplectic manifold, such an action is {\em symplectic} if its graph is lagrangian $\{(g,s,g\cdot s)\}\subset G \times S \times \overline{S}$, where we denote by $\overline{S}$ the space $S$ but equipped with the opposite symplectic structure. 
\begin{defi}[\cite{morsym}] A {\em symplectic Morita equivalence} between two symplectic groupoids $G \rightrightarrows M$ and $H \rightrightarrows N$ consists of a left symplectic groupoid action $G \ltimes S \rightarrow S$ and a right symplectic groupoid action $S \rtimes H \rightarrow S$ on surjective submersions $S \rightarrow M$ and $S \rightarrow N$ with the properties that: (1) both actions commute and (2) they induce two identifications of orbit spaces $S/G\cong N$ and $S/H\cong M$. In this situation, $S$ is called a {\em symplectic Morita bimodule}. \end{defi}
There is a natural extension of this concept for symplectic double groupoids. However, Morita equivalences for double Lie groupoids may be of two types: either vertical or horizontal, see \cite{intgk}. Here we shall only need one version of the general definition.  
\begin{defi}[\cite{intgk}] A horizontal Morita equivalence of symplectic double Lie groupoids is a diagram as below
\[ \begin{tikzcd}
\mathcal{G} \arrow[r, shift right] \arrow[r, shift left] \arrow[d, shift right] \arrow[d, shift left] & \mathcal{K} \arrow[d, shift right] \arrow[d, shift left] & \mathcal{Z} \arrow[r, "q"] \arrow[l, "p"'] \arrow[d, shift right] \arrow[d, shift left] & \mathcal{K}' \arrow[d, shift right] \arrow[d, shift left] & \mathcal{G}' \arrow[l, shift left] \arrow[l, shift right] \arrow[d, shift right] \arrow[d, shift left] \\
\mathcal{H} \arrow[r, shift right] \arrow[r, shift left]                                              & M                                                        & Z \arrow[l, "p_0"'] \arrow[r, "q_0"]                                                    & M'                                                        & \mathcal{H}' \arrow[l, shift left] \arrow[l, shift right]                                             
\end{tikzcd}\]
in which the top row represents a usual symplectic Morita equivalence of symplectic groupoids that relates the horizontal symplectic groupoid structures and, moreover, all the structure maps are groupoid morphisms with respect to the vertical groupoid structure. This means that $\mathcal{Z} $ is a symplectic groupoid over $Z$, that $p,q$ are groupoid morphisms over $p_0,q_0$, respectively, and the action maps $\mathcal{G} \times_{\mathcal{K} } \mathcal{Z} \rightarrow \mathcal{Z}$ and $\mathcal{Z} \times_{\mathcal{K}'} \mathcal{G}'\rightarrow \mathcal{Z}$ are groupoid morphisms as well.

To illustrate graphically horizontal double groupoid actions, we can represent the corresponding elements as squares and the action as concatenation. Also, vertical multiplication of elements in a double groupoid can be represented as vertical concatenation of squares, see Fig.~\ref{fig:resmor} below.
\end{defi}  
\subsection{A symplectic Morita equivalence between $\mathcal{G}_{2n} $ and $\mathcal{G}_{2m}$} One of the most powerful ways of producing such Morita equivalences is given by cutting and gluing decorated surfaces as in \cite{sevmorqua,intgk}. We discuss this general construction in \S\ref{subsec:mor}.

Now we will show that the symplectic groupoids \eqref{eq:sym gro dou sur} are actually double groupoids and they integrate the Poisson groupoid structure $\widetilde{F}_{2n} \rightrightarrows F_n$. We start by introducing a Poisson groupoid over $F_n$ that is in duality with respect to $\widetilde{F}_{2n} $, in the sense that they are the side groupoids of a common symplectic double groupoid that integrates them. Consider the disc with $n+1$ marked points as our marked surface $(\Sigma_{0,n},V_{0,n}) $ and decorated as in Remark \ref{rem:base F2n}. Now take its double surface just as before: by gluing two copies of $\Sigma_{0,n}$ along an arc between each pair of consecutive marked points to produce $(\widehat{\Sigma_{0,n}},\widehat{V_{0,n}})$, a sphere with $n+1$ holes and two marked points on each boundary component. The decoration $(\widehat{H_{0,n}},\widehat{\mathcal{A}_{0,n} })  $of $(\widehat{\Sigma_{0,n}},\widehat{V_{0,n}})$ is as follows:  
\[ \mathfrak{a}_{\widehat{v}_i}=\mathfrak{a}_{v_i}, \qquad \mathfrak{a}_{\widehat{v}_i'}=\mathfrak{a}_{v_i}^\vee;  \]   
where $\mathfrak{a}_{v_i} $ is as in \eqref{eq:dec halfsur}; moreover the boundary components are given the following decoration: we demand that the values $(a,b^{-1})$ of a representation on a boundary component as in Remark \ref{rem:dec dou sur} lie in 
\begin{align}  L_{v_i}=\begin{cases} &\widetilde{B }_+=\{(a, b)\in B_+\times  B_+\ | \ ab^{-1}\in N_+   \}, \quad \text{if $2\leq i\leq n $}  \\
&B_+ \times G, \quad \text{if $i=n+1$,} \\
&G^*=\{(a, b)\in B_+ \times B_-\ |\ \text{pr}_T(a) \text{pr}_{T}(b)=1\}, \quad \text{if $i=1$.} \end{cases}  \label{eq:decduagro}  \end{align}
As a consequence of \cite[Thm. 3.5]{poigromod}, there is then a Poisson groupoid structure on the decorated moduli space corresponding to the decoration we have just described that we denote as $\widetilde{F}_{2n}^* \rightrightarrows F_n$. Such a Poisson groupoid structure arises from reduction of a groupoid structure on 
 \begin{align}   \hom(\Pi_1(\widehat{\Sigma_{0,n}},\widehat{V_{0,n}}),G) \rightrightarrows \hom(\Pi_1(\Sigma_{0,n},V_{0,n}),G), \label{eq:dousurdual} \end{align}  
just as described in \S\ref{subsec:poigromod}, see Prop.~\ref{pro:expduapoi} for explicit formulae.
\begin{prop}\label{pro:symdou} There is a complex algebraic symplectic double groupoid structure on $\mathcal{G}_{2n} \rightrightarrows \widetilde{F}_{2n}  $ as in \eqref{eq:sym gro dou sur} over $\widetilde{F}_{2n}^* \rightrightarrows F_n$. \end{prop}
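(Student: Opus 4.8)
The strategy is to exhibit $\widehat{\Sigma_{2n}}$ as a surface that has been doubled in two independent directions, so that the two resulting groupoid structures on its representation space commute and descend to the desired double groupoid on $\mathcal{G}_{2n}$. Two doubling operations are in play. The first, which I denote $D_\partial$, is the one used before Proposition \ref{pro:intcon}: it cuts an arc in each boundary edge of a disc and glues two copies, sending $(\Sigma_{2n},V_{2n}) \rightsquigarrow (\widehat{\Sigma_{2n}},\widehat{V_{2n}})$ and $(\Sigma_{0,n},V_{0,n}) \rightsquigarrow (\widehat{\Sigma_{0,n}},\widehat{V_{0,n}})$. The second, $D_e$, is the edge-gluing of Remark \ref{rem:base F2n} that presents $(\Sigma_{2n},V_{2n})$ as two copies of $(\Sigma_{0,n},V_{0,n})$ glued along a boundary edge $e$. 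The geometric heart of the argument is that these two operations \emph{commute}: one has $D_\partial(D_e(\Sigma_{0,n})) = D_e(D_\partial(\Sigma_{0,n}))$, so that $(\widehat{\Sigma_{2n}},\widehat{V_{2n}})$ is simultaneously the $D_\partial$-double of $(\Sigma_{2n},V_{2n})$ and the $D_e$-double of $(\widehat{\Sigma_{0,n}},\widehat{V_{0,n}})$.

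By the groupoid construction of \S\ref{subsec:poigromod}, applied along each of the two gluing loci independently, the representation space $\hom(\Pi_1(\widehat{\Sigma_{2n}},\widehat{V_{2n}}),G)$ carries two groupoid structures: the $D_\partial$-structure over $\hom(\Pi_1(\Sigma_{2n},V_{2n}),G)$ and the $D_e$-structure over $\hom(\Pi_1(\widehat{\Sigma_{0,n}},\widehat{V_{0,n}}),G)$. Because the two gluing loci are disjoint, composition in one direction manifestly commutes with composition in the other, so these two structures assemble into a double Lie groupoid whose side groupoids are the representation-space groupoids of $D_e$ (giving $\widetilde{F}_{2n} \rightrightarrows F_n$ after reduction) and of $D_\partial$ (giving $\widetilde{F}_{2n}^* \rightrightarrows F_n$), both with corner $\hom(\Pi_1(\Sigma_{0,n},V_{0,n}),G)$ over $F_n$; the double-source submersion condition follows because each individual source map is a submersion, exactly as in the proof of Proposition \ref{pro:intcon}.

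I would then check that the single decoration $(\mathcal{L}_{2n},\widehat{\mathcal{A}_{2n}})$ defining $\mathcal{G}_{2n}$ is compatible with both reductions. Its symmetry under the $D_\partial$-reflection is precisely what makes $\mathcal{G}_{2n} \rightrightarrows \widetilde{F}_{2n}$ a groupoid via Proposition \ref{pro:intcon}; the remaining point is symmetry under the $D_e$-reflection together with the identification of the $D_e$-reduction with the dual groupoid. Concretely, one verifies that $\widetilde{\mathfrak{b}}_+$ is self-dual under $(\cdot)^\vee$ while $\mathfrak{g}^*$ and $(\mathfrak{g}^*)^\vee$ are interchanged, so that the decoration \eqref{eq:surdec} is invariant under the reflection swapping the two copies of $\Sigma_{0,n}$; and that restricting $(\mathcal{L}_{2n},\widehat{\mathcal{A}_{2n}})$ to one copy of $\widehat{\Sigma_{0,n}}$, with the edge $e$ now internal and hence carrying the composability datum $B_+ \times G$, reproduces exactly the decoration \eqref{eq:decduagro}. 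Since the moment map is a groupoid morphism for both structures and the reducing group $K$ acts principally along each direction, moment-map reduction descends the double Lie groupoid structure to $\mathfrak{M}_G(\widehat{\Sigma_{2n}},\widehat{V_{2n}})_{\mathcal{L}_{2n},\widehat{\mathcal{A}_{2n}}} = \mathcal{G}_{2n}$, with unit spaces $\widetilde{F}_{2n}$ and $\widetilde{F}_{2n}^*$ as required.

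Finally, the reduced quasi-symplectic form is multiplicative for the $D_\partial$-structure by Proposition \ref{pro:intcon}, and the same general multiplicativity argument (the gluing/additivity property of $\Omega_{\widehat{\Sigma_{2n}},\widehat{V_{2n}}}$, as in \cite{poigromod,intgk}) applied along the $D_e$-locus shows it is multiplicative for the $D_e$-structure as well; hence the symplectic form on $\mathcal{G}_{2n}$ is multiplicative for both groupoid structures, yielding a symplectic double groupoid. Algebraicity of all structure maps follows from Proposition \ref{pro:slim}, as in Proposition \ref{pro:intcon}. I expect the main obstacle to lie in the decoration-matching of the third step: verifying that one and the same decoration on $\widehat{\Sigma_{2n}}$ restricts to the Proposition \ref{pro:intcon} data in the $D_\partial$-direction and to the dual data \eqref{eq:decduagro} in the $D_e$-direction, and in particular that the two reducing groups are compatible so that the iterated quotient is well defined and produces a single manifold $\mathcal{G}_{2n}$ carrying both structures.
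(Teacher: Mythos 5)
Your proposal takes essentially the same route as the paper: its proof likewise observes that $(\widehat{\Sigma_{2n}},\widehat{V_{2n}})$ is simultaneously the double of $(\Sigma_{2n},V_{2n})$ along boundary arcs and the double of $(\widehat{\Sigma_{0,n}},\widehat{V_{0,n}})$ obtained by cutting along a path joining $\widehat{v}_{n+1}$ and $\widehat{v}_{n+1}'$, notes that the decoration is symmetric with respect to both decompositions and that the two involutions commute, and then invokes \cite[Cor. 4.9]{poigromod} to produce the symplectic double groupoid with sides $\widetilde{F}_{2n}$ and $\widetilde{F}_{2n}^*$. Your explicit verification of the decoration matching and of double multiplicativity of the symplectic form is precisely what that cited corollary packages, so the two arguments agree in substance.
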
    
\begin{proof} We just have to observe that the Poisson groupoid structures $\widetilde{F}_{2n} \rightrightarrows F_n$, $\widetilde{F}_{2n}^* \rightrightarrows F_n$ and $\mathcal{G}_{2n} \rightrightarrows \widetilde{F}_{2n}  $ arise via doubling surfaces as recalled in \S\ref{subsec:poigromod}, following \cite[Thm. 3.5]{poigromod}. Now, $(\widehat{\Sigma_{2n}},\widehat{V_{2n}})$ as in Proposition \ref{pro:intcon} may also be viewed as the double of the marked surface $(\widehat{\Sigma_{0,n}},\widehat{V_{0,n}})$ by cutting $\widehat{\Sigma_{2n}}$ along a path that joins the two marked points $\widehat{v}_{2n+1}$ and $\widehat{v}_{2n+1}'$, see Fig. \ref{fig:dual}. Since the decoration of $(\widehat{\Sigma_{2n}},\widehat{V_{2n}})$ is symmetric with respect to either of these two decompositions and the involutions determined by these commute (they correspond to the vertical and horizontal reflection symmetries of Fig.~\ref{fig:triang}), we have that \cite[Cor. 4.9]{poigromod} implies that there is indeed a symplectic double groupoid structure on $\mathcal{G}_{2n}$ with sides $\widetilde{F}_{2n}$ and $\widetilde{F}_{2n}^*$.   \end{proof}
\begin{figure}
    \centering
    \begin{tikzpicture}[line cap=round,line join=round,>=triangle 45,x=1cm,y=1cm,scale=0.9]
\clip(2.506181003781667,4.128678009836118) rectangle (8.671945101050948,8.81960655914752);
\draw [rotate around={3.2245226065199:(7.561246716552349,5.478466971264253)},line width=1.5pt] (7.561246716552349,5.478466971264253) ellipse (0.7697377303568282cm and 0.6510864772288915cm);
\fill[rotate around={3.2245226065199:(7.561246716552349,5.478466971264253)},line width=1.5pt,fill=black,fill opacity=0.1] (7.561246716552349,5.478466971264253) ellipse (0.7697377303568282cm and 0.6510864772288915cm);
\draw [rotate around={-2.602562202499796:(4.3612203385359996,5.409097779036143)},line width=1.5pt] (4.3612203385359996,5.409097779036143) ellipse (0.8318289364893398cm and 0.6582160017758658cm);
\fill[rotate around={-2.602562202499796:(4.3612203385359996,5.409097779036143)},line width=1.5pt,fill=black,fill opacity=0.1] (4.3612203385359996,5.409097779036143) ellipse (0.8318289364893398cm and 0.6582160017758658cm);
\draw [rotate around={-0.20187005254749543:(5.5480200950008705,7.722016557938077)},line width=1.5pt] (5.5480200950008705,7.722016557938077) ellipse (0.8215595374952916cm and 0.5037570462220995cm);
\fill[rotate around={-0.20187005254749543:(5.5480200950008705,7.722016557938077)},line width=1.5pt,fill=black,fill opacity=0.1] (5.5480200950008705,7.722016557938077) ellipse (0.8215595374952916cm and 0.5037570462220995cm);
\draw [shift={(2.8715863676840603,7.522299458616793)},line width=1pt]  plot[domain=-1.0361654954648998:0.09396332020550419,variable=\t]({1*1.8634558487075197*cos(\t r)+0*1.8634558487075197*sin(\t r)},{0*1.8634558487075197*cos(\t r)+1*1.8634558487075197*sin(\t r)});
\draw [shift={(8.26600990963108,7.953337640480714)},line width=1pt]  plot[domain=3.2495631571214814:4.4728897144745305,variable=\t]({1*1.9087676962775195*cos(\t r)+0*1.9087676962775195*sin(\t r)},{0*1.9087676962775195*cos(\t r)+1*1.9087676962775195*sin(\t r)});
\draw [shift={(6.054064011776575,2.5915603396976916)},line width=1pt]  plot[domain=1.1302293826980458:2.062369914000789,variable=\t]({1*2.5746147204982144*cos(\t r)+0*2.5746147204982144*sin(\t r)},{0*2.5746147204982144*cos(\t r)+1*2.5746147204982144*sin(\t r)});
\draw [shift={(3.71576488246503,6.262864281793167)},dashed,line width=1pt]  plot[domain=-0.47493556323007535:0.4299875279359948,variable=\t]({1*2.413890136814543*cos(\t r)+0*2.413890136814543*sin(\t r)},{0*2.413890136814543*cos(\t r)+1*2.413890136814543*sin(\t r)});
\draw [shift={(8.037944529685683,6.726488032851255)},dashed,line width=1pt]  plot[domain=2.572912158305599:3.748606066885096,variable=\t]({1*2.765616596417033*cos(\t r)+0*2.765616596417033*sin(\t r)},{0*2.765616596417033*cos(\t r)+1*2.765616596417033*sin(\t r)});
\begin{scriptsize}
\draw [fill=black] (4.7633763633297646,5.9786070323691325) circle (1.5pt);
\draw[color=black] (4.723191859744232,6.2339786016563576) node {$\widehat{v}_1'$};
\draw [fill=black] (7.32350147899894,6.094166209161958) circle (1.5pt);
\draw[color=black] (7.0,6.229129282169773) node {$\widehat{v}_2'$};
\draw [fill=black] (6.79386120054327,5.419191211542009) circle (1.5pt);
\draw[color=black] (7.1,5.467923838062447) node {$\widehat{v}_2$};
\draw [fill=black] (5.192313481390507,5.362662267018276) circle (1.5pt);
\draw[color=black] (4.9,5.3537430214463475) node {$\widehat{v}_1$};
\draw [fill=black] (5.910617475132691,7.269436692355691) circle (1.5pt);
\draw[color=black] (5.964908240444297,7.6) node {$\widehat{v}_3'$};
\draw [fill=black] (5.707601998674813,8.215829782262446) circle (1.5pt);
\draw[color=black] (5.760334277340455,8.55) node {$\widehat{v}_3$};
\end{scriptsize}
\end{tikzpicture}  
    \caption{The double surface of a disc with three marked points; by cutting it along the dashed arc we obtain the marked surface $\widehat{\Sigma_{0,1}} $ corresponding to $\widetilde{F}^*_2$}
    \label{fig:dual}
\end{figure}
In the proof of the following theorem, we will review how such a double groupoid structure arises. An alternative description of such a double groupoid structure in terms of its side groupoids is given in Proposition \ref{pro:slim}.

\begin{rema} In what follows, we view the groupoid structure of $\mathcal{G}_{2n}$ over $\widetilde{F}_{2n}^*$ as horizontal and the one over $\widetilde{F}_{2n}$ as vertical. \end{rema}
   
\begin{thm}\label{thm:symdoumor} The symplectic groupoid $\mathcal{G}_{n+m} \rightrightarrows \widetilde{F}_{n+m}$ serves as the symplectic Morita bimodule for a horizontal symplectic Morita equivalence between the symplectic double groupoids $\mathcal{G}_{2n} $ and $\mathcal{G}_{2m}$:
\[ \begin{tikzcd}
\mathcal{G}_{2m} \arrow[r, shift right] \arrow[r, shift left] \arrow[d, shift right] \arrow[d, shift left] & \widetilde{F}^*_{2m}  \arrow[d, shift right] \arrow[d, shift left] & \mathcal{G}_{n+m} \arrow[r, "q"] \arrow[l, "p"'] \arrow[d, shift right] \arrow[d, shift left] & \widetilde{F}_{2n}^*  \arrow[d, shift right] \arrow[d, shift left] & \mathcal{G}_{2n} \arrow[l, shift left] \arrow[l, shift right] \arrow[d, shift right] \arrow[d, shift left] \\
\widetilde{F}_{2m}  \arrow[r, shift right] \arrow[r, shift left]                                              & F_m                                                        & \widetilde{F}_{n+m}   \arrow[l, "p_0"'] \arrow[r, "q_0"]                                                    & F_n                                                        & \widetilde{F}_{2n}; \arrow[l, shift left] \arrow[l, shift right]                                             
\end{tikzcd}\]
moreover, the action maps are morphisms of complex algebraic varieties.
   \end{thm}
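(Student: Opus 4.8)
The plan is to realize this Morita equivalence as the fundamental instance of the ``cutting and gluing'' construction for decorated moduli spaces developed in \cite{sevmorqua,intgk}, which I will recall in detail in \S\ref{subsec:mor}. The key geometric observation is that the bimodule surface should be obtained by cutting the relevant double surfaces along a single arc. Concretely, I would first exhibit $(\widehat{\Sigma_{n+m}},\widehat{V_{n+m}})$ as a surface that can be cut along a properly embedded arc $\gamma$ joining two boundary marked points so as to produce the disjoint union of $(\widehat{\Sigma_{0,n}},\widehat{V_{0,n}})$ and $(\widehat{\Sigma_{0,m}},\widehat{V_{0,m}})$---the very surfaces whose decorated moduli spaces give $\widetilde F_{2n}^*$ and $\widetilde F_{2m}^*$. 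In other words, $\mathcal{G}_{n+m}$ arises by gluing along $\gamma$, and restriction to the two pieces of the cut provides the commuting left and right groupoid actions, yielding the maps $p,q$ and $p_0,q_0$ at the moduli level.

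The second step is to verify that this gluing picture produces exactly a \emph{horizontal} Morita equivalence in the sense of the definition above, rather than merely a Morita equivalence of the horizontal symplectic groupoids. Here I would argue that the surface $(\widehat{\Sigma_{n+m}},\widehat{V_{n+m}})$ carries \emph{two} commuting reflection involutions: the one used in Proposition \ref{pro:symdou} to exhibit the vertical groupoid structure over $\widetilde F_{n+m}$, and the one whose fixed-point locus is the cutting arc $\gamma$. Because the chosen decoration is symmetric with respect to both involutions and the involutions commute, the general compatibility result \cite[Cor. 4.9]{poigromod} (already invoked in Proposition \ref{pro:symdou}) guarantees that the bimodule $\mathcal{G}_{n+m}$ is itself a symplectic groupoid over $\widetilde F_{n+m}$ and that all the structure maps $p,q$ and the action maps are morphisms for this vertical groupoid structure. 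This is precisely the data required by the definition of horizontal Morita equivalence of symplectic double groupoids.

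I would then check the defining conditions of a symplectic Morita equivalence for the horizontal (top) row: that $p,q$ are surjective submersions, that the two groupoid actions commute, and that they identify the orbit spaces as $\mathcal{G}_{n+m}/\mathcal{G}_{2m}\cong \widetilde F_{2n}^*$ and $\mathcal{G}_{n+m}/\mathcal{G}_{2n}\cong \widetilde F_{2m}^*$ (with analogous identifications at the base for $\widetilde F_{n+m}$ over $F_n,F_m$). All of these follow from the submersivity of the moment map $\mu$ established in the proof of Proposition \ref{pro:intcon} together with the principality of the gauge action; the orbit-space identifications reduce to the topological fact that the two pieces obtained by cutting along $\gamma$ recover the original double surfaces, so that the fibers of $p$ and $q$ are exactly the gauge orbits of the respective side groupoids. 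The lagrangian/multiplicativity properties needed for the actions to be \emph{symplectic} are inherited from the quasi-symplectic $2$-form $\Omega_{\widehat\Sigma,\widehat V}$ via the reduction recalled in \S\ref{subsec:mom map surf}, and algebraicity of the action maps follows from Proposition \ref{pro:slim} exactly as in the proof of Proposition \ref{pro:intcon}.

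The main obstacle I anticipate is bookkeeping rather than conceptual: one must pin down the cutting arc $\gamma$ and the induced labeling of boundary components so that the decoration on $(\widehat{\Sigma_{n+m}},\widehat{V_{n+m}})$ restricts correctly to the decorations \eqref{eq:decduagro} on the two halves, matching the $B_+\times G$ decoration at the distinguished vertex $v_{n+1}$ that encodes the duality. Verifying that the two involutions genuinely commute and that the decoration is simultaneously symmetric for both---so that \cite[Cor. 4.9]{poigromod} applies---is the delicate point, and I would organize it by appealing to the explicit symmetry picture suggested by Fig.~\ref{fig:triang}, reducing the claim to a finite check on the lagrangian subalgebras $\mathfrak{l}_{v_i}$ and their duals $\mathfrak{l}_{v_i}^\vee$ at each marked point.
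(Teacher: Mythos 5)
Your geometric setup agrees with the paper's: $\mathcal{G}_{n+m}$ is the reduction of the representation variety of the surface obtained by gluing $\widehat{\Sigma_{0,n}}$ and $\widehat{\Sigma_{0,m}}$ along a single arc, the top row of the diagram comes from Theorem \ref{thm:morequ}, symplecticity of the actions from \eqref{eq:equ2for}, and algebraicity from Proposition \ref{pro:slim}. But your second step rests on an object that does not exist when $n\neq m$. An involution of $\widehat{\Sigma_{n+m}}$ whose fixed-point locus is the cutting arc $\gamma$ would have to interchange the two components of the cut surface, namely $\widehat{\Sigma_{0,n}}$ (a sphere with $n+1$ holes) and $\widehat{\Sigma_{0,m}}$ (a sphere with $m+1$ holes); these are homeomorphic only if $n=m$. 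Hence the pair of commuting involutions you want is unavailable, and \cite[Cor. 4.9]{poigromod} cannot be invoked: that corollary turns two commuting symmetric decompositions into a double groupoid structure (this is exactly how Proposition \ref{pro:symdou} is proved, and it is why the case $n=m$ yields the double groupoid $\mathcal{G}_{2n}$), and it says nothing about compatibility of a bimodule's \emph{actions} with a vertical groupoid structure. The paper establishes that compatibility differently: it lifts the whole diagram to double pair groupoids of representation varieties --- $\mathcal{M}_{2n}$, $\mathcal{M}_{n+m}$, $\mathcal{M}_{2m}$, built by gluing four copies of discs --- where the vertical structures and the horizontal actions are both given by concatenation, so the morphism property is a direct check (an interchange law), and only afterwards reduces by the decoration.

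Your third step has a second, independent gap: surjectivity of the decorated moment maps $p$ and $q$ does not follow from the submersivity of $\mu$ in Proposition \ref{pro:intcon}, nor from Theorem \ref{thm:morequ}, nor from principality of the gauge action. Those facts give surjectivity \emph{before} the boundary decorations are imposed; after restricting to the decorated locus and reducing, principality can genuinely fail, as the paper itself warns in the remark following Theorem \ref{thm:morequ} (``However, principality might not hold''). This is precisely why Step 3 of the paper's proof is a Lie-theoretic argument: given a decorated representation of $\Pi_1(\widehat{\Sigma_{0,n}},\widehat{V_{0,n}})$ with value $g$ on the gluing arc and $b\in B_+$ on the adjacent boundary edge, one must construct a decorated representation of $\Pi_1(\widehat{\Sigma_{0,m}},\widehat{V_{0,m}})$ that glues to it, which reduces to solving $h^{-1}(y^{-1}x)h=bg^{-1}$ with $(x,y)\in G^*$ and $h\in G$; solvability holds because $y^{-1}x$ ranges over the big Gauss cell $N_-TN_+$, whose conjugates cover all of $G$. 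Without this extension argument (or a substitute for it) the orbit-space identifications you assert --- and with them the Morita equivalence itself --- are not established.
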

\begin{proof} The strategy is to apply Theorem \ref{thm:morequ} to produce the Morita equivalence in the first row in the diagram above. Since such a Morita equivalence is obtained by reduction from a Morita equivalence of pair groupoids, we just need to show how it is actually induced by a horizontal Morita equivalence of double pair groupoids to show its compatibility with the vertical groupoid structures.  

{\em Step 1: the double groupoid structure on $\mathcal{G}_{2n} $}. Denote $\hom(\Pi_1(\Sigma_{0,k},V_{0,k}),G)$ by $P_k$, we use the conventions of Remark \ref{rem:infact} in what follows. Following the construction of \cite[Prop. 3.11,Thm. 3.5]{poigromod} as reviewed in \S\ref{subsec:mor}, the groupoid structure of \eqref{eq:dousurdual} is obtained as the quotient 
\[ G^{2n+2}\backslash (P_{3n+2} \times_{S_n} P_{3n+2}) \rightrightarrows P_n, \qquad S_n=\{e_3,e_6,e_9,\dots \}; \] 
where the fibred product is the one defined as in \eqref{eq:quomodspa} and the action is \eqref{eq:gauact} restricted to the copies of $G$ associated with the vertices adjacent to edges in $S_n$. The double groupoid of Theorem \ref{pro:symdou} is obtained as in the proof of \cite[Thm. 4.7]{poigromod}, namely, we start with the disjoint union of four copies of $\Sigma_{0,3n+2}$ and then we glue them and delete marked points until we obtain $(\widehat{\Sigma_{2n}},\widehat{V_{2n}} )$. In terms of representation varieties, we take the double pair groupoid over $P_{3n+2}$ and then its double subgroupoid corresponding to the gluing pattern to be described below:
\[ \begin{tikzcd}[scale cd=.85]
{\mathcal{M}_{2n}:= (P_{3n+2}\times_{S_n} P_{3n+2})\times_{\{\iota_1(e_2)\ast\iota_2(e_2)\}}(P_{3n+2}\times_{S_n} P_{3n+2})} \arrow[d, shift right] \arrow[d, shift left] \arrow[rr, shift right] \arrow[rr, shift left] &  & P_{3n+2}\times_{S_n} P_{3n+2} \arrow[d, shift right] \arrow[d, shift left] \\
P_{3n+2}\times_{\{e_2\}}P_{3n+2} \arrow[rr, shift right] \arrow[rr, shift left]                                                                                                           &  & P_{3n+2};                                                              
\end{tikzcd} \]
so $\mathcal{M}_{2n}$ corresponds to the representations of the fundamental groupoid of the surface obtained by gluing four copies of $\Sigma_{0,3n+2}$ as follows. First we glue the copies of $\Sigma_{0,3n+2}$ pairwise along the two inclusions of $S_n$, this creates two inclusions $\iota_i:\Sigma_{0,3n+2}\hookrightarrow  \Sigma_{0,3n+2}\cup_{S_n}\Sigma_{0,3n+2}$ for $i=1,2$. Then in this new surface, $\iota_1(e_2)$ and $\iota_2(e_2)$ are glued to create a single boundary edge $\iota_1(e_2)\ast\iota_2(e_2)$ and we glue the two copies of $ \Sigma_{0,3n+2}\cup_{S_n}\Sigma_{0,3n+2}$ alongside it:
 \begin{align}  \widehat{ \Sigma_{2n}}\cong (\Sigma_{0,3n+2}\cup_{S_n}\Sigma_{0,3n+2})\cup_{\{\iota_1(e_2)\ast\iota_2(e_2)\}} (\Sigma_{0,3n+2}\cup_{S_n}\Sigma_{0,3n+2}); \label{eq:glu4} \end{align}  
where the double surface $\widehat{ \Sigma_{2n}}$ is the one used in \eqref{eq:dousurdousym}. To obtain the corresponding groupoid structure in \eqref{eq:dousurdousym}, we are required to apply the gauge action \eqref{eq:gauact} restricted to a suitable subgroup of $G^{4V_{0,3n+2}}$ which is in fact a double groupoid itself:
\[ \begin{tikzcd}[scale cd=.85]
\mathcal{K}_{2n}:= G^{2n+1}\times G^{2n+1}\times G_\Delta \arrow[d, shift right] \arrow[d, shift left] \arrow[rr, shift right] \arrow[rr, shift left] &  &  G^{2n+2} \arrow[d, shift right] \arrow[d, shift left] \\
G^{2n+1}\times G^{2n+1}\times G_\Delta \arrow[rr, shift right] \arrow[rr, shift left]                                              &  & G^{2n+2};                                             
\end{tikzcd} \]
horizontally, this is the product of the pair groupoid over $G^{2n+1}$ and the unit groupoid over $G$, vertically, it is a unit groupoid. As a subgroup of $G^{4V_{0,3n+2}}$, we have that $G^{2n+1}\times G^{2n+1}\times G_\Delta$ corresponds to the vertices adjacent to all the edges in $S_n$, with the diagonal factor being $G_\Delta \subset G_{\mathtt{T}(e_2)}^2$ (the subindex denoting the corresponding vertex).

The upshot of this discussion is that we obtain a double groupoid as the quotient $\mathcal{M}_{2n}/\mathcal{K}_{2n}$:  \[ \begin{tikzcd}[scale cd=.85]
{\hom(\Pi_1(\widehat{\Sigma_{2n}},\widehat{V_{2n}}),G)} \arrow[d, shift right] \arrow[d, shift left] \arrow[rr, shift right] \arrow[rr, shift left] &  & {\hom(\Pi_1(\widehat{\Sigma_{0,n}},\widehat{V_{0,n}}),G)} \arrow[d, shift right] \arrow[d, shift left] \\
{\hom(\Pi_1(\Sigma_{2n},V_{2n}),G)} \arrow[rr, shift right] \arrow[rr, shift left]                                                                  &  & {\hom(\Pi_1(\Sigma_{0,n},V_{0,n}),G);}                                                                 
\end{tikzcd}\] 
the symplectic double groupoid structure on $\mathcal{G}_{2n} $ arises from reducing the above double groupoid using the decoration of Remark \ref{rem:dec dou sur}.

{\em Step 2: the symplectic bimodule}. Now we can obtain $\mathcal{G}_{n+m}$ as the reduction of the representation variety of the disjoint union of two copies of $\Sigma_{0,3n+2}$ and two copies of $\Sigma_{0,3m+2}$. This means that we should consider the representation variety $\mathcal{M}_{n+m}:= (P_{3m+2}\times_{S_m} P_{3m+2})\times_{\{\iota_1(e_2)\ast\iota_2(e_2)\}}(P_{3n+2}\times_{S_n} P_{3n+2})$ which naturally functions as a bimodule for a Morita equivalence of double groupoids between $\mathcal{M}_{2n}$ and $\mathcal{M}_{2m}$. The diagram below illustrates only one of the projections for the two actions. 
\[ \begin{tikzcd}[scale cd=.85]
\mathcal{M}_{n+m} \arrow[d, shift right] \arrow[d, shift left] \arrow[r] & P_{3n+2}\times_{S_n} P_{3n+2} \arrow[d, shift right] \arrow[d, shift left] & \mathcal{M}_{2n} \arrow[l, shift left] \arrow[l, shift right] \arrow[d, shift right] \arrow[d, shift left] \\
P_{3m+2}\times_{\{e_2\}}P_{3n+2} \arrow[r]                               & P_{3n+2}                                                               & P_{3n+2}\times_{\{e_2\}}P_{3n+2}. \arrow[l, shift left] \arrow[l, shift right]                             
\end{tikzcd}\]
The horizontal actions of $\mathcal{M}_{2n}$ and $\mathcal{M}_{2m}$ on $\mathcal{M}_{n+m}$ are compatible with the vertical groupoid structure and hence they descend to a horizontal Morita equivalence of double groupoids between $\hom(\Pi_1(\widehat{\Sigma_{2n}},\widehat{V_{2n}}),G)$ and $\hom(\Pi_1(\widehat{\Sigma_{2m}},\widehat{V_{2m}}),G)$, as described in Theorem \ref{thm:morequ}. Notice that, after reducing by the gauge action corresponding to the vertices adjacent to the edges that are being glued, $\mathcal{M}_{n+m} \rightrightarrows 
 P_{3m+2}\times_{\{e_2\}}P_{3n+2}$ determines the Lie groupoid $\hom(\Pi_1(\widehat{\Sigma_{n+m}},\widehat{V_{n+m}}),G) \rightrightarrows \hom(\Pi_1({\Sigma_{n+m}},{V_{n+m}}),G)$. 

After reducing the spaces $\mathcal{M}_{2n}$, $\mathcal{M}_{n+m}$ and $\mathcal{M}_{2m}$ using their corresponding decorations as in Remark \ref{rem:dec dou sur}, we get a pair of commuting symplectic actions of $\mathcal{G}_{2n}$ and $\mathcal{G}_{2m}$ on $\mathcal{G}_{n+m}$. The fact that the actions are symplectic follows from \eqref{eq:equ2for} in Theorem \ref{thm:morequ}.

{\em Step 3: principality of the actions}. The actions of $\mathcal{G}_{2n}$ and $\mathcal{G}_{2m}$ on $\mathcal{G}_{n+m}$ are obtained by reduction from a Morita equivalence but we still need to verify the surjectivity of the corresponding moment maps $p$ and $q$ as that property is not guaranteed to hold after reduction. In other words, we need to show that we can extend any representation $\rho\in \hom(\Pi_1(\widehat{\Sigma_{0,n}},\widehat{V_{0,n}}),G)$ (with corresponding boundary conditions) to a representation in $\hom(\Pi_1(\widehat{\Sigma_{n+m}},\widehat{V_{n+m}}),G)$ with boundary conditions as in Remark \ref{rem:dec dou sur}. For this purpose, it is enough to consider the following two values of $\rho$ and then show that we can find a representation $\rho'\in\hom(\Pi_1(\widehat{\Sigma_{0,m}},\widehat{V_{0,m}}),G)$ that has those values at a pair of boundary edges and, additionally, that $\rho$ and $\rho'$ together determine a suitably decorated representation in $\hom(\Pi_1(\widehat{\Sigma_{n+m}},\widehat{V_{n+m}}),G)$. The first of these two values is $\rho(e)=g$, where $e$ is the path along which $\widehat{\Sigma_{0,n}}$ and $\widehat{\Sigma_{0,m}}$ are glued to produce $\widehat{\Sigma_{n+m}}$, as indicated in the dashed path in Fig.~\ref{fig:dual}. The second value can be $\rho(e')=b$, where $e'$ is the boundary edge adjacent to $e$, note that $b\in B_+$.
\begin{figure}[H] 
 \begin{tikzpicture}[line cap=round, line join=round, >=stealth, scale=0.5]
  \coordinate (C1) at (-2.5, 0);
  \coordinate (C2) at (0, 0);
  \coordinate (C3) at (2.5, 0);
  \def\r{0.6}
  
  \coordinate (v1) at (0, 2.5);
  \coordinate (v2) at (0, -2.5);
  \coordinate (v3) at (-3.1, 0);
  \coordinate (v4) at (-1.9, 0);
  \coordinate (v5) at (-0.6, 0);
  \coordinate (v6) at (0.6, 0);
  \coordinate (v7) at (1.9, 0);
  \coordinate (v8) at (3.1, 0);

  \draw[thick] (v1) arc (90:270:5 and 2.5) node[midway, left=6pt] {$b$};
  \draw[thick] (v1) arc (90:-90:5 and 2.5) node[midway, right=6pt] {$g$};

  \draw[thick, <-] (v3) arc (180:0:\r) node[midway, above=1pt] {$x$};
  \draw[thick, <-] (v3) arc (180:360:\r) node[midway, below=1pt] {$y$};

  \draw[thick] (v5) arc (180:0:\r) node[midway, above=1pt] {$1$};
  \draw[thick] (v5) arc (180:360:\r) node[midway, below=1pt] {$1$};

  \draw[thick] (v7) arc (180:0:\r) node[midway, above=1pt] {$1$};
  \draw[thick] (v7) arc (180:360:\r) node[midway, below=1pt] {$1$};

  \draw[thick,dashed] (v4) -- (v5) node[midway, below=2pt] {$1$};
  \draw[thick,dashed] (v6) -- (v7) node[midway, below=2pt] {$1$};

  \draw[thick,->,dashed] (v1) -- (v4) node[midway, left=1pt] {$h$};

  \begin{scriptsize}
    \fill (v1) circle (1.5pt) node[above] {};
    \fill (v2) circle (1.5pt) node[below] {};
    \fill (v3) circle (1.5pt) node[left] {};
    \fill (v4) circle (1.5pt) node[above right] {};
    \fill (v5) circle (1.5pt) node[above left] {};
    \fill (v6) circle (1.5pt) node[above right] {};
    \fill (v7) circle (1.5pt) node[above left] {};
    \fill (v8) circle (1.5pt) node[right] {};
  \end{scriptsize}
\end{tikzpicture}
\caption{The values that specify $\rho'\in\hom(\Pi_1(\widehat{\Sigma_{0,m}},\widehat{V_{0,m}}),G)$}
    \label{fig:surgen}
\end{figure}
Consider the generators for $\Pi_1(\widehat{\Sigma_{0,m}},\widehat{V_{0,m}})$ displayed in Fig.~\ref{fig:surgen}. We decorate with $1$ the boundary components supposed to be decorated with $\widetilde{B }_+$, the remaining boundary component imposes the constraint $(x,y)\in G^*$. So the only additional equation that we need to satisfy is $h^{-1}(y^{-1}x)h=bg^{-1}$ for some $h\in G$. But $y^{-1}x$ must lie in the big Gauss cell $N_-TN_+\subset G$ which covers the entirety of $G$ under conjugation. This means that we can find $x,y,h$ that solve the equation $h^{-1}(y^{-1}x)h=bg^{-1}$ as desired. To conclude the argument, note that the action maps may be expressed in terms of the actions of the side groupoids on $\widetilde{F}_{n+m}$ as argued in Proposition \ref{pro:slim}: so the domain of the action map is 
\[ \mathcal{G}_{2m} \times_{\widetilde{F}_{2m}^* } \mathcal{G}_{m+n}\subset(\widetilde{F}_{2m} \times \widetilde{F}_{2m}^* \times \widetilde{F}_{2m}^* \times \widetilde{F}_{2m})\times_{\widetilde{F}_{2m}^*}(\widetilde{F}_{m+n} \times \widetilde{F}_{2n}^* \times \widetilde{F}_{2m}^* \times \widetilde{F}_{m+n})  \]
and each of the factors is defined as a subvariety by equations as in \eqref{eq:eqs}.
 Using the principality of the action $\widetilde{F}_{2m}\times_{F_m} \widetilde{F}_{m+n}\rightarrow \widetilde{F}_{m+n}$ we may then use the isomorphism $\widetilde{F}_{2m}\times_{F_m} \widetilde{F}_{m+n} \rightarrow \widetilde{F}_{m+n}\times_{F_n} \widetilde{F}_{m+n} $ defined by $ (g,z)\mapsto (g\cdot z,z)$ to identify 
\[ \mathcal{G}_{2m} \times_{\widetilde{F}_{2m}^* } \mathcal{G}_{m+n} \cong \mathcal{G}_{m+n} \times_{\widetilde{F}_{2n}^* } \mathcal{G}_{m+n} \subset(\widetilde{F}_{m+n} \times \widetilde{F}_{2n}^* \times \widetilde{F}_{2m}^* \times \widetilde{F}_{m+n})\times_{\widetilde{F}_{2n}^*}(\widetilde{F}_{m+n} \times \widetilde{F}_{2n}^* \times \widetilde{F}_{2m}^* \times \widetilde{F}_{m+n})  \]
as desired, so the result follows. Proceed similarly for the action of $\mathcal{G}_{2n} $.   
\end{proof} 
\subsection{Restriction of the Morita equivalence to symplectic subgroupoids of $\widetilde{F}_{2n}$}\label{subsec:symfol} One of the main results in \cite{conpoigro} is a description of the symplectic foliation of $\widetilde{F}_{2n}$, including the fact that the symplectic leaves passing through the unit submanifold $F_n \subset\widetilde{F}_{2n}$ are symplectic subgroupoids and that in fact all the units of the special configuration Poisson groupoid of flags $\Gamma^{(\mathbf{u},\mathbf{u}^{-1})} \rightrightarrows \mathcal{O}^{\mathbf{u} }  $ lie in a single symplectic leaf. A general fact is that the orbits of a double groupoid that pass through the units of a side groupoid are Lie subgroupoids. To simplify notation, in the course of the following discussion, we omit the unit embeddings in the side groupoids.
\begin{prop}\label{cor:symorb} Take $p\in F_n$, then 
\[ \begin{gathered}  \mathcal{G}_{2n} \cdot^V p =\{x\in \widetilde{F}_{2n}\ |\ \text{there is $g\in \mathcal{G}_{2n}$ such that $\mathtt{t}^V(g)=x$ and $\mathtt{s}^V(g)=\mathtt{u} (p) $}  \}\subset \widetilde{F}_{2n} \\
\mathcal{G}_{2n} \cdot^H p =\{\xi\in \widetilde{F}_{2n}^*\ |\ \text{there is $g\in \mathcal{G}_{2n}$ such that $\mathtt{t}^H(g)=\xi$ and $\mathtt{s}^H(g)=\mathtt{u} (p) $}  \}\subset \widetilde{F}_{2n}^* \end{gathered} \]
are symplectic subgroupoids of the corresponding side groupoids. 
  \end{prop}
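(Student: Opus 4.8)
The plan is to read each orbit as a symplectic leaf of one of the two side Poisson groupoids carried by $\mathcal{G}_{2n}$, and then to feed it into the integration machinery for leaves through units. The two statements (for $\mathcal{G}_{2n}\cdot^V p$ and $\mathcal{G}_{2n}\cdot^H p$) are exchanged by the reflection symmetry of $(\widehat{\Sigma_{2n}},\widehat{V_{2n}})$ that was used in Proposition \ref{pro:symdou} to produce the double groupoid: it swaps the two decompositions and hence the roles of $\widetilde{F}_{2n}$ and $\widetilde{F}_{2n}^*$. So I would prove the vertical statement and obtain the horizontal one by this symmetry. By Proposition \ref{pro:intcon} the vertical groupoid $\mathcal{G}_{2n}\rightrightarrows\widetilde{F}_{2n}$ is a symplectic groupoid integrating the Poisson manifold $\widetilde{F}_{2n}$, and since the symplectic form on $\mathcal{G}_{2n}$ is multiplicative in both directions, the horizontal groupoid $\mathcal{G}_{2n}\rightrightarrows\widetilde{F}_{2n}^*$ likewise integrates $\widetilde{F}_{2n}^*$.

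The first substantive step is to identify the orbit with a symplectic leaf. Because the orbits of a symplectic groupoid are exactly the symplectic leaves of the induced Poisson structure on its base, the set $\mathcal{G}_{2n}\cdot^V p=\mathtt{t}^V((\mathtt{s}^V)^{-1}(\mathtt{u}(p)))$ is precisely the symplectic leaf of $\widetilde{F}_{2n}$ through the unit point $\mathtt{u}(p)\in F_n\subset\widetilde{F}_{2n}$, and is therefore an immersed symplectic submanifold; likewise $\mathcal{G}_{2n}\cdot^H p$ is the symplectic leaf of $\widetilde{F}_{2n}^*$ through $\mathtt{u}(p)$. This places both orbits among the symplectic leaves passing through the unit submanifold, which are exactly the objects handled by the decorated-moduli integration.

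The second step upgrades the leaf to a \emph{symplectic subgroupoid}. For the algebraic subgroupoid structure I would invoke the general fact, flagged above, that the orbits of a double groupoid through the units of a side groupoid are Lie subgroupoids: the interchange law forces $\mathcal{G}_{2n}\cdot^V p$ to be closed under the side multiplication of $\widetilde{F}_{2n}\rightrightarrows F_n$ (over the $\widetilde{F}_{2n}^*$-orbit of $p$ in $F_n$), by horizontally composing two squares with bottom edge $\mathtt{u}(p)$ after aligning their inner vertical edges inside the horizontal groupoid. For the symplectic part I would then apply \cite[Prop. 3.15]{poigromod}, valid because $\mathcal{G}_{2n}$ is built from the decorated moduli space of Proposition \ref{pro:intcon}: it guarantees that the restricted groupoid structure together with the induced leaf symplectic form make such a leaf a symplectic groupoid. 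Running the same argument on the other side, or transporting it by the reflection symmetry, gives the statement for $\mathcal{G}_{2n}\cdot^H p\subset\widetilde{F}_{2n}^*$ and recovers the description of the symplectic foliation of \cite{conpoigro}.

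I expect the main obstacle to be exactly the closure under the side multiplication, i.e. the subgroupoid property. One must check that two squares witnessing $x,x'\in\mathcal{G}_{2n}\cdot^V p$ can always be matched along their inner vertical edges, so that their horizontal composite witnesses the side product $x\cdot x'$ with bottom edge again $\mathtt{u}(p)$; this is where the interchange law and the transitivity of the horizontal groupoid on the relevant fibres are essential, and it is the real content of the cited general fact in the present setting. Once closure is established, multiplicativity of the leaf symplectic form is not an extra difficulty: it follows from multiplicativity of the Poisson tensor on $\widetilde{F}_{2n}$ restricted along the leaf, as already encoded in \cite[Prop. 3.15, Prop. 3.8]{poigromod}.
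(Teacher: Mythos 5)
The central gap is your identification of $\mathcal{G}_{2n}\cdot^V p$ with the symplectic leaf of $\widetilde{F}_{2n}$ through $\mathtt{u}(p)$. Orbits of a symplectic groupoid coincide with symplectic leaves only when the source-fibres are connected, and for $\mathcal{G}_{2n}$ this is precisely what is not known: the paper states, right after this circle of results, that it is unclear whether the vertical or horizontal source-fibres of $\mathcal{G}_{2n}$ are connected, and that consequently it is not immediate that the symplectic leaves themselves carry groupoid structures. Your proof inherits this problem twice: the appeal to \cite[Prop.~3.15]{poigromod} concerns leaves through the unit submanifold, so it does not apply to the orbits appearing in the statement, and your closing claim of recovering the symplectic foliation of \cite{conpoigro} overclaims for the same reason (the paper only extracts Corollary \ref{cor:gensch}, namely containment of generalized Schubert cells in leaves). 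What holds without any connectedness hypothesis --- and is what the paper actually uses --- is the weaker fact that an orbit of a symplectic groupoid is an immersed submanifold whose tangent spaces are the images of $\pi^{\sharp}$, hence a disjoint union of equal-dimensional symplectic leaves; this yields symplecticity of both orbits directly from Propositions \ref{pro:intcon} and \ref{pro:symdou}, with no leaf identification and no external citation needed.

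The second problem is that the step you correctly single out as ``the real content'' --- closure of the orbit under the side multiplication --- is only described, never carried out, while it constitutes essentially the whole of the paper's proof (the proof of Proposition \ref{pro:symorbact}, specialized to $n=m$ and $x=\mathtt{u}(p)$). The mechanism is also more specific than ``transitivity of the horizontal groupoid on the relevant fibres'': given squares $g,g'$ with $\mathtt{s}^V(g)=\mathtt{s}^V(g')=\mathtt{u}(p)$ and with $\mathtt{t}^V(g)$, $\mathtt{t}^V(g')$ composable in $\widetilde{F}_{2n}\rightrightarrows F_n$, one cannot in general compose $g$ and $g'$ horizontally; instead one corrects $g$ by the horizontal unit square $h=\mathtt{u}^H\bigl(\mathtt{s}^H(g)^{-1}\mathtt{t}^H(g')\bigr)$. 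Because horizontal units have unit vertical source and target, $k=\mathtt{m}^V(g,h)$ still satisfies $\mathtt{s}^V(k)=\mathtt{u}(p)$ and $\mathtt{t}^V(k)=\mathtt{t}^V(g)$, while now $\mathtt{s}^H(k)=\mathtt{t}^H(g')$; hence $\mathtt{m}^H(k,g')$ is defined, and it has vertical target $\mathtt{t}^V(g)\,\mathtt{t}^V(g')$ and vertical source $\mathtt{u}(p)\,\mathtt{u}(p)=\mathtt{u}(p)$, which is the desired closure (inverses are handled by $\mathtt{i}^H$). If you write this out, your reflection-symmetry reduction of the horizontal statement to the vertical one is a legitimate replacement for the paper's tacit transposition of the double groupoid, and the proof goes through; but as submitted, the argument rests on an unjustified orbit--leaf identification exactly where it should contain this computation.
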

\begin{rema} Note that for $p\in F_n$, $\mathcal{G}_{2n} \cdot^V p $ is a groupoid over the orbit $\widetilde{F}_{2n}^*\cdot p$ and $\mathcal{G}_{2n} \cdot^H p$ is a groupoid over $\widetilde{F}_{2n}\cdot p$ but $\widetilde{F}_{2n}$ is a transitive groupoid and so $\widetilde{F}_{2n}\cdot p=F_n$. It follows that $\mathcal{G}_{2n} \cdot^H p \rightrightarrows F_n$ is a symplectic groupoid that integrates the Poisson structure on $F_n$. \end{rema}  
As we shall explain below, the proof of the next proposition specializes to a proof of Proposition \ref{cor:symorb}. 
 
\begin{prop}\label{pro:symorbact} For all $x\in \widetilde{F}_{n+m}$, the symplectic actions of Theorem \ref{thm:symdoumor} restrict to a pair of commuting symplectic actions of $\mathcal{G}_{2n} \cdot^V q_0(x) $ and $\mathcal{G}_{2m} \cdot^V p_0(x)$ on the orbit $\mathcal{G}_{n+m}\cdot x \subset \widetilde{F}_{n+m}$. \end{prop}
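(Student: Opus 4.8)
The plan is to exploit the defining compatibility of the horizontal Morita equivalence of Theorem~\ref{thm:symdoumor} with the vertical groupoid structures, and then to cut everything down to the vertical orbit $O:=\mathcal{G}_{n+m}\cdot x\subseteq \widetilde{F}_{n+m}$. Since the horizontal action map $\mathcal{G}_{n+m}\times_{\widetilde{F}_{2n}^*}\mathcal{G}_{2n}\to \mathcal{G}_{n+m}$ is a morphism with respect to the vertical groupoid structures, it covers an action $\widetilde{F}_{n+m}\times_{F_n}\widetilde{F}_{2n}\to \widetilde{F}_{n+m}$ of the vertical side groupoid $\widetilde{F}_{2n}\rightrightarrows F_n$ through $q_0$ (and symmetrically for $\mathcal{G}_{2m}$ through $p_0$). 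So the first reduction is that it suffices to show that these two base actions restrict to $O$ and, moreover, restrict to actions of the subgroupoids $\mathcal{G}_{2n}\cdot^V q_0(x)$ and $\mathcal{G}_{2m}\cdot^V p_0(x)$. Along the way I would record the general fact, valid in any double Lie groupoid, that a vertical orbit through a unit of a side groupoid is a Lie subgroupoid of that side, and that in the symplectic case it is a symplectic leaf, hence a symplectic subgroupoid; this is exactly Proposition~\ref{cor:symorb}, so the argument below subsumes its proof when the acting groupoid acts on one of its own unit orbits.

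The core is then two compatibility checks. First, that the action is defined on $O$: because $q$ is a morphism of vertical groupoids, for any $\eta\in\mathcal{G}_{n+m}$ with $\mathtt{s}^V(\eta)=x$ and $\mathtt{t}^V(\eta)=y\in O$ the element $q(\eta)\in\widetilde{F}_{2n}^*$ is a vertical arrow from $q_0(x)$ to $q_0(y)$, whence $q_0(O)\subseteq \widetilde{F}_{2n}^*\cdot q_0(x)$, which is precisely the base of the symplectic subgroupoid $\mathcal{G}_{2n}\cdot^V q_0(x)$; thus the groupoid action is composable along $q_0|_O$. Second, that it preserves $O$: given $a\in \mathcal{G}_{2n}\cdot^V q_0(x)$ I would lift $a$ to $g\in\mathcal{G}_{2n}$ with $\mathtt{t}^V(g)=a$, $\mathtt{s}^V(g)=\mathtt{u}(q_0(x))$ and $\mathtt{t}^H(g)=q(\eta)$, the existence of such a $g$ following from the double source map $(\mathtt{s}^V,\mathtt{s}^H)$ being a submersion, with the corner constraints automatically consistent. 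Then $\eta\cdot g$ is defined, and since the action map is a vertical morphism and the vertical unit $\mathtt{s}^V(g)=\mathtt{u}(q_0(x))$ acts trivially, one gets $\mathtt{s}^V(\eta\cdot g)=x$ and $\mathtt{t}^V(\eta\cdot g)=y\cdot a$; this exhibits $y\cdot a\in \mathcal{G}_{n+m}\cdot x=O$, as wanted.

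It remains to verify that the two restricted actions are symplectic and commute. Commutativity is inherited verbatim from the commuting horizontal actions of Theorem~\ref{thm:symdoumor}. For symplecticity I would use that $O$ is a symplectic leaf of $\widetilde{F}_{n+m}$ and that $\mathcal{G}_{2n}\cdot^V q_0(x)$ carries its leaf symplectic form as in Proposition~\ref{cor:symorb}, and then identify both, together with the restricted action, as the moment map reduction of the same glued-surface data used in Theorem~\ref{thm:symdoumor}, but with the decoration $H$ sharpened so as to single out the leaf $O$ (exactly as in the passage from $\Gamma_{2n}$ to the special configuration groupoids $\Gamma^{(\mathbf{u},\mathbf{u}^{-1})}$ recorded in \eqref{eq:spconfladec} and \S\ref{subsec:symfol}). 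The graph of the restricted action then appears as the reduction of the Lagrangian graph of the ambient symplectic action, and the explicit reduced two-form \eqref{eq:equ2for} of Theorem~\ref{thm:morequ} shows that it is Lagrangian for the leaf forms. I expect this last step to be the main obstacle, precisely because the symplectic forms on $O$ and on $\mathcal{G}_{2n}\cdot^V q_0(x)$ are the \emph{leaf} forms of the Poisson structures on $\widetilde{F}_{n+m}$ and $\widetilde{F}_{2n}$, not restrictions of the ambient forms on $\mathcal{G}_{n+m}$ and $\mathcal{G}_{2n}$; the delicate point is therefore to check that cutting out the leaf $O$ by a sharper decoration remains compatible with the surface gluing, so that \eqref{eq:equ2for} applies leaf-wise, which I anticipate follows from the fact that the relevant leaves are themselves decorated moduli spaces.
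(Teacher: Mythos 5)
Your overall strategy is the same as the paper's: regard the actions of Theorem \ref{thm:symdoumor} as inducing actions of the vertical side groupoids $\widetilde{F}_{2n}\rightrightarrows F_n$ and $\widetilde{F}_{2m}\rightrightarrows F_m$ on $q_0,p_0:\widetilde{F}_{n+m}\to F_n,F_m$, check composability over the orbit (your observation that $q_0(O)\subseteq\widetilde{F}^*_{2n}\cdot q_0(x)$, the base of $\mathcal{G}_{2n}\cdot^V q_0(x)$, is correct and matches the paper's standing hypothesis), and prove orbit preservation by lifting to squares in the double groupoids. But your key step is not justified: you need $g\in\mathcal{G}_{2n}$ with \emph{three} prescribed edges, $\mathtt{t}^V(g)=a$, $\mathtt{s}^V(g)=\mathtt{u}(q_0(x))$ and $\mathtt{t}^H(g)=q(\eta)$, and you derive its existence from the double source map $(\mathtt{s}^V,\mathtt{s}^H)$ being a submersion. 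This fails on two counts. First, in the paper's definition the double source map is only required to be a submersion, not surjective, so one cannot even fill in two prescribed source edges in general. Second, even surjectivity of $(\mathtt{s}^V,\mathtt{s}^H)$ would only allow prescribing the two \emph{source} edges; prescribing three edges of a square is generically impossible --- in a vacant double groupoid (e.g.\ the one attached to a matched pair) two adjacent edges already determine the square uniquely, so a third edge cannot be chosen freely. Corner compatibility, which you do check, is necessary but far from sufficient.

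The gap is exactly what the paper's explicit construction repairs. Take any $g_0\in\mathcal{G}_{2n}$ with $\mathtt{t}^V(g_0)=a$ and $\mathtt{s}^V(g_0)=\mathtt{u}(q_0(x))$, which exists by the very definition of $\mathcal{G}_{2n}\cdot^V q_0(x)$. Both $\mathtt{s}^H(g_0)$ and $q(\eta)$ are arrows of $\widetilde{F}^*_{2n}$ from $q_0(x)$ to $q_0(y)$, so $\lambda:=\mathtt{s}^H(g_0)^{-1}q(\eta)$ is a loop at $q_0(x)$; now correct $g_0$ by vertically multiplying with the horizontal unit over this loop,
\begin{equation*}
k:=\mathtt{m}^V\bigl(g_0,\mathtt{u}^H(\lambda)\bigr),
\end{equation*}
so that by the interchange laws $\mathtt{t}^V(k)=a$, $\mathtt{s}^V(k)=\mathtt{u}(q_0(x))$ and $\mathtt{s}^H(k)=\mathtt{s}^H(g_0)\lambda=q(\eta)$. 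Then the action of $k$ on $\eta$ is defined, and since the action map is a morphism of vertical groupoids and $\mathtt{s}^V(k)$ is a unit, it has source $x$ and target $a\cdot\mathtt{t}(\eta)$, proving that the orbit is preserved. This correction trick (the element $h=\mathtt{u}^H(\mathtt{s}^H(g)^{-1}p(g'))$, $k=\mathtt{m}^V(g,h)$ of the paper, Fig.~\ref{fig:resmor}) --- and not a transversality or surjectivity argument --- is the actual content of the proof. Your closing paragraph on symplecticity via sharpened decorations and a leafwise \eqref{eq:equ2for} is also not needed and not the paper's route: once the actions are known to restrict, commutativity and symplecticity are inherited from Theorem \ref{thm:symdoumor} together with the fact that the orbits involved are the symplectic subgroupoids and leaves already exhibited in Proposition \ref{cor:symorb}.
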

\begin{proof} Firstly, the groupoid structures in Proposition \ref{cor:symorb} arise from setting $n=m$ and $x=p\in F_n\subset \F$ in the argument below. Now take $x \in \widetilde{F}_{n+m}$ and $g\in \mathcal{G}_{2m}$ such that $\mathtt{s}^V(g)=p_0(x)$. Suppose there is $g'\in \mathcal{G}_{n+m}$ such that $\mathtt{s}(g')=x$ and $\mathtt{s}(\mathtt{t}^V(g))=p_0(\mathtt{t}(g')) $. We need to show that $\mathtt{t}^V(g)\cdot \mathtt{t}(g')\in \mathcal{G}_{n+m}\cdot x $. Consider $h:=\mathtt{u}^H(\mathtt{s}^H(g)^{-1}p(g'))$, note that $\mathtt{s}^V(h)=\mathtt{t}^V(h)= p_0(x)$. Then $k:=\mathtt{m}^V(g,h)$ is such that $\mathtt{t}^V(k)=\mathtt{t}^V(g) $ and $\mathtt{s}^H (k)=p(g')$. As a consequence, the element $k\cdot g'$ is defined and it satisfies the conditions $\mathtt{t}(k\cdot g')= \mathtt{t}^V(g)\cdot \mathtt{t}(g')$ and $\mathtt{s}(k\cdot g')=x$, thus completing the proof, see Fig.~\ref{fig:resmor}.          
\end{proof}

\begin{wrapfigure}{r}{8cm} 
\begin{tikzpicture}[line cap=round,line join=round,>=triangle 45,x=1cm,y=1cm,scale=0.7]
\clip(-5.388461380851111,-4.303729547683533) rectangle (6.73463735311124,1.9359045012127334);
\fill[line width=1pt,color=black,fill=black,fill opacity=0.10000000149011612] (-3.2310078990054785,1.3811005069157833) -- (-3.24,-0.72) -- (-1.1388994930842171,-0.7289921009945219) -- (-1.129907392089695,1.3721084059212614) -- cycle;
\fill[line width=1pt,color=black,fill=black,fill opacity=0.10000000149011612] (-3.2125614835904406,-1.8857596630873033) -- (-3.212561483590441,-3.8964189433263567) -- (-1.201902203351387,-3.8964189433263567) -- (-1.2019022033513869,-1.8857596630873033) -- cycle;
\fill[line width=1pt,color=black,fill=black,fill opacity=0.10000000149011612] (2.8285123288731711,-0.20329115289316996) -- (2.828512328873171,-2.3955924084275217) -- (5.0208135844075223,-2.3955924084275217) -- (5.020813584407523,-0.20329115289317023) -- cycle;
\draw [line width=1pt,color=black] (-3.2310078990054785,1.3811005069157833)-- (-3.24,-0.72);
\draw [line width=1pt,color=black] (-3.24,-0.72)-- (-1.1388994930842171,-0.7289921009945219);
\draw [line width=1pt,color=black] (-1.1388994930842171,-0.7289921009945219)-- (-1.129907392089695,1.3721084059212614);
\draw [line width=1pt,color=black] (-1.129907392089695,1.3721084059212614)-- (-3.2310078990054785,1.3811005069157833);
\draw [line width=1pt,color=black] (-3.2125614835904406,-1.8857596630873033)-- (-3.212561483590441,-3.8964189433263567);
\draw [line width=1pt,color=black] (-3.212561483590441,-3.8964189433263567)-- (-1.201902203351387,-3.8964189433263567);
\draw [line width=1pt,color=black] (-1.201902203351387,-3.8964189433263567)-- (-1.2019022033513869,-1.8857596630873033);
\draw [line width=1pt,color=black] (-1.2019022033513869,-1.8857596630873033)-- (-3.2125614835904406,-1.8857596630873033);
\draw [line width=1pt,color=black] (2.8285123288731711,-0.20329115289316996)-- (2.828512328873171,-2.3955924084275217);
\draw [line width=1pt,color=black] (2.828512328873171,-2.3955924084275217)-- (5.0208135844075223,-2.3955924084275217);
\draw [line width=1pt,color=black] (5.0208135844075223,-2.3955924084275217)-- (5.020813584407523,-0.20329115289317023);
\draw [line width=1pt,color=black] (5.020813584407523,-0.20329115289317023)-- (2.8285123288731711,-0.20329115289316996);
\begin{scriptsize}
\draw [fill=black] (-3.2310078990054785,1.3811005069157833) circle (1.5pt);
\draw[color=black] (-2.1509281261570234,1.7354230171129545) node {$\mathtt{t}^V(g)  $};
\draw [fill=black] (-3.24,-0.72) circle (1.5pt);
\draw[color=black] (-2.1509281261570234,-0.37507595276713035) node {$p_0(x)$};
\draw [fill=black] (-1.1388994930842171,-0.7289921009945219) circle (1.5pt);
\draw [fill=black] (-1.129907392089695,1.3721084059212614) circle (1.5pt);
\draw [fill=black] (-3.2125614835904406,-1.8857596630873033) circle (1.5pt);
\draw[color=black] (-2.1509281261570234,-1.5366684090577198) node {$p_0(x)$};
\draw [fill=black] (-3.212561483590441,-3.8964189433263567) circle (1.5pt);
\draw[color=black] (-2.1509281261570234,-3.5490046361526844) node {$p_0(x)$};
\draw [fill=black] (-1.201902203351387,-3.8964189433263567) circle (1.5pt);
\draw [fill=black] (-1.2019022033513869,-1.8857596630873033) circle (1.5pt);
\draw [fill=black] (2.8285123288731711,-0.20329115289316996) circle (1.5pt);
\draw[color=black] (3.955546613687174,0.14845867542017752) node {$\mathtt{t}(g') $};
\draw [fill=black] (2.828512328873171,-2.3955924084275217) circle (1.5pt);
\draw[color=black] (3.955546613687174,-2.643842580114174) node {$x$};
\draw [fill=black] (5.0208135844075223,-2.3955924084275217) circle (1.5pt);
\draw [fill=black] (5.020813584407523,-0.20329115289317023) circle (1.5pt);
\draw[color=black] (-2.1672885832878767,0.3883537607383388) node {$g$};
\draw[color=black] (-0.2672885832878767,0.3883537607383388) node {$\mathtt{s}^H(g)  $};
\draw[color=black] (-2.1509281261570234,-2.834656294039775) node {$h$};
\draw[color=black] (0.1509281261570234,-2.834656294039775) node {$\mathtt{s}^H (g)^{-1} p(g')$};
\draw[color=black] (3.955546613687174,-1.2640524094778516) node {$g'$};
\draw[color=black] (2.155546613687174,-1.2640524094778516) node {$p(g')$};
\end{scriptsize}
\end{tikzpicture}
\caption{Restriction of a horizontal Morita equivalence to an action of vertical orbits}\label{fig:resmor}\end{wrapfigure}
\begin{rema} We can see that the above argument extends to any Morita equivalence of double groupoids. However, the actions thus obtained do not have to be principal in general. \end{rema}  
One may deduce from Proposition \ref{cor:symorb} that generalized Schubert cells are contained in symplectic leaves, a fact pointed out in \cite[Thm. 5.1]{conpoigro} (see Corollary \ref{cor:gensch}). However, as it is unclear whether the vertical or horizontal source-fibres of $\mathcal{G}_{2n}$ are connected, it is not immediate that such symplectic leaves are groupoids themselves in our picture.

\section{Symplectic groupoids over configuration Poisson groupoids of flags} In this section, we construct symplectic double groupoids integrating the total configuration Poisson groupoids of flags $\Gamma_{2n} \rightrightarrows F_n$. To start we explain what the general integration strategy is for a decorated moduli space and then we shall show how the strategy specializes to the examples in \S\ref{sec:confla}. This discussion is based on \cite{poigromod,2lagpoi}. 

Consider a decorated moduli space $\mathfrak{M}_G(\Sigma,V)_{H,\mathcal{A}} $ as in \eqref{eq:decmodspa}. Now we apply the observation of \S\ref{subsec:poigromod} to a specific marked surface obtained from $(\Sigma,V)$, which coincides with the {\em topological double of $(\Sigma,V)$} introduced in \cite{symdoumodspa}. Let $(\Sigma_D,V_D)$ be the surface obtained by gluing two copies of $\Sigma$ as follows:
\begin{itemize}
    \item The two copies of $\partial \Sigma$ in the disjoint union $\Sigma \sqcup \Sigma$ are identified via the identity map to produce a closed surface $\Sigma \cup_{\partial \Sigma} \Sigma$. Consequently, any boundary component of $\Sigma$ becomes a closed curve in the interior of the doubled surface.
    \item We perform the real oriented blowup of every point in the image of $V$ within $\Sigma \cup_{\partial \Sigma} \Sigma$. Thus, if the original set $V$ consists of $n$ points on $\partial \Sigma$, the resulting surface $\Sigma_D$ has exactly $n$ boundary components. 
\item Finally, we choose two marked points on each boundary component of $\Sigma_D$, this determines the set of marked points $V_D\subset \partial \Sigma_D$. 
\end{itemize}
The decoration $\mathcal{A} $ of $V$ determines a unique decoration $\mathcal{A}_D$ of the vertices of $(\Sigma_D,V_D)$ which is symmetric with respect to the decomposition of $\Sigma_D$ as the gluing of two copies of $(\Sigma,V)$ as above. Now decorate the boundary of $(\Sigma_D,V_D)$ with the $\mathcal{A}_D $-orbit $\mathcal{L}$ of the unit $1\in G^{E_D}$, where $E_D$ is the set of boundary edges of $(\Sigma_D,V_D)$. 

Then we have a symplectic groupoid structure on 
\begin{align}  \mathfrak{M}_G(\Sigma_D,V_D)_{\mathcal{L},\mathcal{A}_D} \rightrightarrows \mathfrak{M}_G(\Sigma,V)_{G^E,\mathcal{A}}, \label{eq:intdecmodspa} \end{align} 
see \cite[Thm. 3.5, Prop. 3.8]{poigromod} and also \cite{2lagpoi}. Note that, in the decoration of $\mathfrak{M}_G(\Sigma,V)_{G^E,\mathcal{A}}$, the edges are fully decorated with copies of $G$. 
\begin{exa} The surface $(\widehat{\Sigma_k},\widehat{V_k})$ constructed at the beginning of \S\ref{sec:morequ} is the particular case of $(\Sigma_D,V_D)$ that corresponds to $(\Sigma_k,V_k)$ being a disc with $k+1$ marked points. Note that, in general, $\mathcal{L} $ is given by a product of subgroups of $G \times G$ just as in the special case of Remark \ref{rem:dec dou sur}. \end{exa}
\begin{prop} The inclusion $\mathfrak{M}_G(\Sigma,V)_{H,\mathcal{A}}\subset \mathfrak{M}_G(\Sigma,V)_{G^E,\mathcal{A}} $ is a Poisson map and the decorated moduli space $\mathfrak{M}_G(\Sigma,V)_{G^E,\mathcal{A}}$ is in turn integrable by the symplectic groupoid \eqref{eq:intdecmodspa}. \qed \end{prop}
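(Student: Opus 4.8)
The plan is to treat the two assertions separately, and to exploit the fact that both decorated moduli spaces in question are reductions by one and the same group. Since the group $K=\prod_{v\in V}K_v$ appearing in \eqref{eq:decmodspa} depends only on the Lie-algebra datum $\mathcal{A}$ through the subalgebras $\mathfrak{a}_v\cap\mathfrak{g}_\Delta$, and the decorations $(H,\mathcal{A})$ and $(G^E,\mathcal{A})$ share the same $\mathcal{A}$, the two spaces are
\[ \mathfrak{M}_G(\Sigma,V)_{H,\mathcal{A}}=\mu^{-1}(H)/K,\qquad \mathfrak{M}_G(\Sigma,V)_{G^E,\mathcal{A}}=M/K, \]
with the \emph{same} $K$. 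First I would record that the inclusion is a well-defined embedding: because $\mathcal{A}$ preserves $H$ and $K$ integrates $\bigoplus_v\mathfrak{a}_v\cap\mathfrak{g}_\Delta\subset\mathcal{A}$, the $G^V$-equivariance of $\mu$ makes $\mu^{-1}(H)\subset M$ a $K$-invariant submanifold; together with freeness and properness of the $K$-action this yields an embedded submanifold $\mu^{-1}(H)/K\hookrightarrow M/K$.

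The heart of the first assertion is tangency of the ambient Poisson bivector to this submanifold. The key observation is that the bivector $\pi_{\Sigma,V}+\sum_{v\in V}\mathtt{a}_{\Sigma,V}(\pi_v)$ of \eqref{eq:bivmodspa} is literally the same expression for both decorations, since the quasi-Poisson tensor $\pi_{\Sigma,V}$ and the fibrewise bivectors $\pi_v$ depend only on the skeleton and on $\mathcal{A}$, not on $H$. By \cite[Thm. 1.4]{quisur2} applied to $(H,\mathcal{A})$, this expression, evaluated at a point $m\in\mu^{-1}(H)$, lies in $\wedge^2\bigl(T_m\mu^{-1}(H)/\mathtt{a}_{\Sigma,V}(\bigoplus_v\mathfrak{g}_\Delta\cap\mathfrak{a}_v)\bigr)$; that is, it is tangent to $\mu^{-1}(H)$ modulo the $K$-orbit directions. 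Under the natural inclusion of quotient tangent spaces this says exactly that the Poisson bivector of $\mathfrak{M}_G(\Sigma,V)_{G^E,\mathcal{A}}$ is tangent at $[m]$ to $T_{[m]}\bigl(\mu^{-1}(H)/K\bigr)$. Hence $\mathfrak{M}_G(\Sigma,V)_{H,\mathcal{A}}$ is a Poisson submanifold, and since the restriction of the ambient bivector coincides by construction with the reduction computing the Poisson structure of $\mathfrak{M}_G(\Sigma,V)_{H,\mathcal{A}}$, the inclusion is a Poisson map.

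For the second assertion I would simply invoke the doubling construction. The symplectic groupoid \eqref{eq:intdecmodspa} is precisely what the procedure of \S\ref{subsec:poigromod} produces from the topological double $(\Sigma_D,V_D)$ equipped with the symmetric decoration $\mathcal{A}_D$ and the boundary orbit $\mathcal{L}$; by \cite[Thm. 3.5, Prop. 3.8]{poigromod} its base is the decorated moduli space whose edge-decoration is the restriction of $\mathcal{A}_D$ to a single copy of $\Sigma$. Since the original boundary edges $E$ become interior curves of $\Sigma_D$ after gluing along $\partial\Sigma$, no moment-map constraint survives on them, so the base is $\mathfrak{M}_G(\Sigma,V)_{G^E,\mathcal{A}}$; the conditions that $V_D$ meets every boundary component and that $T\mathcal{L}$ is spanned by the $\mathcal{A}_D$-action guarantee, again via \cite[Prop. 3.8]{poigromod}, that \eqref{eq:intdecmodspa} is symplectic and integrates the base.

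The step I expect to require the most care is matching the two reductions: one must check that the very same $K$ and the very same bivector \eqref{eq:bivmodspa} govern both spaces, and that the ambient quotient $M/K$ is genuinely a smooth (in the algebraic setting, geometric) quotient, for which I would appeal to Proposition \ref{pro:slim}. Granting this, the tangency is not an extra computation but a reinterpretation of \cite[Thm. 1.4]{quisur2}: the property that \eqref{eq:bivmodspa} is a well-defined section over $T\mu^{-1}(H)$ for the decoration $(H,\mathcal{A})$ is exactly the tangency exhibiting $\mathfrak{M}_G(\Sigma,V)_{H,\mathcal{A}}$ as a Poisson submanifold of $\mathfrak{M}_G(\Sigma,V)_{G^E,\mathcal{A}}$. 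A cleaner but equivalent route, matching the strategy borrowed from \cite{craruipoi}, is to read off the Lie--Dirac (hence Poisson) submanifold structure directly from the reduced Dirac description of $\mu^{-1}(H)$ as the clean preimage of the $\mathcal{A}$-invariant submanifold $H$.
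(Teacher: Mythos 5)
Your proposal is correct and follows essentially the same route as the paper, which states this proposition with no written proof precisely because both Poisson structures are reductions of the one expression \eqref{eq:bivmodspa} by the same group $K$ (so tangency, and hence the Poisson property of the inclusion, is a reinterpretation of \cite[Thm. 1.4]{quisur2}), while integrability of the ambient space is exactly the content of \cite[Thm. 3.5, Prop. 3.8]{poigromod} applied to the topological double. Your elaboration — same $K$, same bivector, smoothness of the ambient quotient in the relevant cases via Proposition \ref{pro:slim} — makes explicit the points the paper treats as immediate.
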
 
As a consequence, integrating a specific decorated moduli space $\mathfrak{M}_G(\Sigma,V)_{H,\mathcal{A}} $ amounts to: (I) constructing the larger symplectic groupoid \eqref{eq:intdecmodspa} and then (II) reducing the restriction of \eqref{eq:intdecmodspa} to  $\mathfrak{M}_G(\Sigma,V)_{H,\mathcal{A}} $ (if possible) by its characteristic (or null) foliation as in \cite[\S 9.2,Prop. 8]{craruipoi} to obtain a suitable integration. Such a restricted groupoid is a coisotropic submanifold of $\mathfrak{M}_G(\Sigma_D,V_D)_{\mathcal{L},\mathcal{A}_D}$ and is called an {\em over-symplectic groupoid}, it plays a role in the theory of normal forms around Poisson submanifolds \cite{fermar}. The symplectic groupoid \eqref{eq:intdecmodspa} may be viewed as the fibred product of two 2-shifted Lagrangian morphisms with target $(G \times G)^E$, as explained in \cite{2lagpoi}.

A further simplification of the integration problem for Poisson submanifolds arises when they are Lie-Dirac submanifolds \cite{dirsub, craruipoi}. A submanifold of an integrable Poisson manifold is Lie-Dirac if and only if it is the base manifold of a symplectic subgroupoid of some integration. Consequently, in this setting, rather than performing a reduction of an over-symplectic groupoid, the problem reduces to identifying a subgroupoid inside it that has a trivial intersection with the leaves of its characteristic foliation, thereby becoming a symplectic subgroupoid. This is precisely the situation that is relevant for the present work. 
\subsection{Integration of the total configuration Poisson groupoids of flags} 
Now we shall implement the general strategy for the integration of decorated moduli spaces described above to the $n$-th total configuration Poisson groupoids of flags. In this situation, since we are dealing with decorated moduli spaces which are also Poisson groupoids, there is a further general observation about how to produce a symplectic double groupoid that is an integration of a Poisson subgroupoid of one of its side groupoids. Although such an observation is not fully general, it suffices for our purposes.
\subsubsection{Lie-Dirac subgroupoids of Poisson groupoids and slim symplectic double groupoids} The infinitesimal counterpart of a (smooth or holomorphic) Lie groupoid ${G} \rightrightarrows M$ is its {\em Lie algebroid}, which is the vector bundle $A_G:=\ker T\mathtt{s}|_M $, equipped with the vector bundle map called its {\em anchor} $\mathtt{a}:A_G \rightarrow TM$, which is given by $T\mathtt{t}|_A$ and a Lie bracket on its sheaf of sections, given by extending each local section to a local right-invariant vector field on $G$ and then restricting the Lie bracket of vector fields to such right-invariant extensions \cite{macgen}.

In the case of the side groupoids $\mathcal{A},\mathcal{B} $ of a symplectic double groupoid $\mathcal{D} $, their corresponding Lie algebroids constitute a {\em Manin triple} \cite{liuweixu} or, equivalently, a {\em Lie bialgebroid} \cite{macxu}. Conversely, the main result of \cite{tracou} is that, if the Lie algebroids $A,B$ in a Manin triple over $M$ have transverse anchor maps to $TM$, we can construct $\mathcal{D} $ under mild assumptions. These assumptions include the existence of Lie groupoids $\mathcal{A} $ and $\mathcal{B} $ that are respective integrations of $A$ and $B$. In such a situation, we can produce a double Lie groupoid as a submanifold
\begin{align}  \mathcal{D} \subset \{(x,v,u,y) \in \mathcal{A} \times \mathcal{B}\times \mathcal{B} \times \mathcal{A}| \, \mathtt{t}(x)=\mathtt{t}  (u),\, \mathtt{s}(x)=\mathtt{t}  (v),\, \mathtt{s}(u)=\mathtt{t}  (y),\, \mathtt{s}(v)=\mathtt{s}  (y) \}; \label{eq:luwei1}\end{align} 
with the two multiplications defined by 
\begin{equation} \begin{gathered} \mathtt{m}^H ((x,v,u,y) ,(x',w,v,y') )=( xx',w,u, yy'),\\
\mathtt{m}^V ((x,v,u,y) ,(y,v',u',z) )=(x,vv',uu',z);\end{gathered}\label{eq:luweimul}
\end{equation}  
see \cite[Thm. 4.10]{tracou}. In what follows, we shall say that a double groupoid $\mathcal{D}$ is {\em slim} if it is embedded in the product of its side groupoids via the map
\[ d\mapsto (\mathtt{t}^V(d),\mathtt{s}^H(d),\mathtt{t}^H(d),\mathtt{s}^V(d)   ). \]
 
\begin{rema} Providing that $\mathcal{A},\mathcal{B}  $ are equipped with suitable 2-forms (which can be produced using van Est integration \cite{2lagpoi}), we can define a symplectic form on $\mathcal{D} $ which makes it into a symplectic double groupoid \cite[Thm. 4.10]{tracou}. Such a symplectic form also depends on the choice of a splitting of an exact Courant algebroid \cite[Thm. 3.5, Thm. 4.10]{tracou}. Since these methods are transcendental, it is not obvious that the resulting symplectic form should be algebraic, even if $\mathcal{A} $ and $\mathcal{B}$ are. This contrasts with the symplectic forms derived from moduli-theoretic considerations, which are manifestly algebraic.\end{rema}

Suppose that $\mathcal{D} $ as in \eqref{eq:luwei1} is a slim symplectic double groupoid with symplectic form $\omega $. Recall that $B$ can be identified with $A^*$ via the pairing given by 
\begin{equation}  (u,\xi)\mapsto \omega(T\mathtt{u}^V(u),T\mathtt{u}^H(\xi)) \label{eq:pairing} \end{equation} 
for all $(u,\xi)\in A\oplus B$ \cite[Prop. 2.6]{intgk}. Suppose that $\widetilde{\mathcal{A}}\subset \mathcal{A}$ is a saturated Poisson subgroupoid, meaning that it is a union of $\mathcal{D}$-orbits, then $\mathcal{D}|_{\widetilde{\mathcal{A} } } $ is a Lie subgroupoid. The corresponding Lie algebroid $\widetilde{A}\subset A$ is a Lie subalgebroid over a submanifold $N \subset M$ such that the annihilator $ \text{Ann}(\widetilde{A})\subset B|_N$ is a bundle of ideals lying in $\ker (\mathtt{a}_B )$, where $\mathtt{a}_B:B \rightarrow TM$ is the anchor. Suppose that $ \text{Ann}(\widetilde{A})$ is integrable by a normal source-connected Lie subgroupoid $\mathcal{N} \subset \mathcal{B}|_N $ that we shall call an {\em annihilator subgroupoid of $\widetilde{\mathcal{A} } $}. Then we can define the following double Lie subgroupoid of \eqref{eq:luwei1}:
\begin{align}  \mathcal{K} =\{(x,u,v,x) \in \widetilde{\mathcal{A}} \times_N \mathcal{N}\times \mathcal{N} \times_N \widetilde{\mathcal{A}}| \, (x,u,v,x)\in \mathcal{D}  \}. \label{eq:luwei3}\end{align}
\begin{prop}\label{pro:ker} The Lie algebroid  $\mathfrak{K}   $ of $\mathcal{K}$ over $\widetilde{A}$ coincides with $\ker \iota^*\omega|_{\widetilde{\mathcal{A} }} $, where $\iota:\mathcal{D}|_{\widetilde{\mathcal{A}  } } \hookrightarrow \mathcal{D} $ is the inclusion.
\end{prop}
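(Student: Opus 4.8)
The plan is to verify the asserted equality fibrewise over $\widetilde{\mathcal{A}}$. Write $A^V \to \mathcal{A}$ for the Lie algebroid of the vertical groupoid $\mathcal{D} \rightrightarrows \mathcal{A}$; since $\omega$ is multiplicative for this structure, $\mathcal{D} \rightrightarrows \mathcal{A}$ is a symplectic groupoid integrating $\mathcal{A}$ as a Poisson manifold and $\omega$ identifies $A^V \cong T^*\mathcal{A}$. Both $\mathfrak{K}$ and $\ker \iota^*\omega|_{\widetilde{\mathcal{A}}}$ are subbundles of $A^V|_{\widetilde{\mathcal{A}}} = T^*\mathcal{A}|_{\widetilde{\mathcal{A}}}$: the former because $\mathcal{K} \subset \mathcal{D}|_{\widetilde{\mathcal{A}}}$ is a vertical subgroupoid over $\widetilde{\mathcal{A}}$, and the latter because the characteristic directions of the coisotropic restriction live inside $\ker T\mathtt{s}^V$ along the units. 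I would therefore fix $x \in \widetilde{\mathcal{A}}$ and compute both fibres inside $A^V_x = T^*_x\mathcal{A}$, noting that each carries its Lie bracket as a subobject of $A^V$, so that matching the underlying bundles suffices.

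First I would identify $\ker \iota^*\omega|_{\widetilde{\mathcal{A}}}$. Since $\widetilde{\mathcal{A}}$ is saturated it is a Poisson submanifold of $\mathcal{A}$, so $\mathcal{D}|_{\widetilde{\mathcal{A}}} = (\mathtt{s}^V)^{-1}(\widetilde{\mathcal{A}})$ is coisotropic and $\ker \iota^*\omega = (T\mathcal{D}|_{\widetilde{\mathcal{A}}})^\omega$ is its characteristic distribution. The standard computation at a unit $\mathtt{u}^V(x)$, using the canonical model $T_{\mathtt{u}^V(x)}\mathcal{D} \cong T_x\mathcal{A} \oplus T^*_x\mathcal{A}$ in which $\omega$ is the duality pairing, $T\mathtt{s}^V$ is the projection onto $T_x\mathcal{A}$ and $T\mathtt{t}^V$ differs from it by the anchor, yields $\ker \iota^*\omega|_x = \mathrm{Ann}(T_x\widetilde{\mathcal{A}}) \subset T^*_x\mathcal{A}$; this is precisely \cite[\S 9.2, Prop. 8]{craruipoi}. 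Moreover saturation forces $\mathrm{Ann}(T_x\widetilde{\mathcal{A}})$ into the kernel of the Poisson anchor of $\mathcal{A}$ (because the image of the anchor is tangent to $\widetilde{\mathcal{A}}$), so these null directions have vanishing anchor, matching the fact that $\mathcal{K}$ is a bundle of groups in the vertical direction.

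Next I would compute $\mathfrak{K}$ from \eqref{eq:luwei3}. In the slim coordinates $(x,v,u,y) = (\mathtt{t}^V, \mathtt{s}^H, \mathtt{t}^H, \mathtt{s}^V)$ of \eqref{eq:luwei1}, an element of $\mathcal{K}$ has its two $\widetilde{\mathcal{A}}$-legs equal and its two $\mathcal{B}$-legs in $\mathcal{N}$; since $\mathcal{N}$ integrates $\mathrm{Ann}(\widetilde A) \subset \ker \mathtt{a}_B$ it is a bundle of groups, and the matching conditions of \eqref{eq:luwei1} force $\mathtt{s}^V = \mathtt{t}^V$ on $\mathcal{K}$. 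Differentiating at $\mathtt{u}^V(x)$ and imposing $T\mathtt{s}^V = 0$ gives that $\mathfrak{K}_x$ consists of those $w \in A^V_x$ with vanishing anchor whose two horizontal legs are tangent to $\mathcal{N}$, i.e. lie in $\mathrm{Ann}(\widetilde A)_{\mathtt{s}(x)}$ and $\mathrm{Ann}(\widetilde A)_{\mathtt{t}(x)}$. It then remains to show that, under $A^V_x \cong T^*_x\mathcal{A}$, this horizontal-leg condition is exactly $w \in \mathrm{Ann}(T_x\widetilde{\mathcal{A}})$.

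The heart of the argument, and the step I expect to be the main obstacle, is this last translation between two annihilators taken with respect to different pairings: $\mathrm{Ann}(\widetilde A) \subset B$ uses the duality $B \cong A^*$ of the side algebroids over $M$, whereas $\mathrm{Ann}(T_x\widetilde{\mathcal{A}}) \subset T^*_x\mathcal{A}$ uses the tangent--cotangent pairing over $\mathcal{A}$. To bridge them I would exploit the pairing \eqref{eq:pairing}, namely $\omega(T\mathtt{u}^V(\,\cdot\,), T\mathtt{u}^H(\,\cdot\,))$, the multiplicativity of $\omega$, and the explicit multiplications \eqref{eq:luweimul}; after using right translations in $\mathcal{A}$ to reduce to a unit of $M$, these combine into a formula computing $\langle w, \delta x\rangle$ for $w \in A^V_x$ and $\delta x \in T_x\mathcal{A}$ in terms of the horizontal legs of $w$ paired against the $A$-component of $\delta x$. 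Evaluating on $\delta x \in T_x\widetilde{\mathcal{A}}$ then shows that the horizontal legs of $w$ lie in $\mathrm{Ann}(\widetilde A)$ if and only if $w \in \mathrm{Ann}(T_x\widetilde{\mathcal{A}})$; since saturation already makes the latter imply the vanishing of the anchor of $w$, the anchor constraint in the description of $\mathfrak{K}_x$ becomes automatic, and we conclude $\mathfrak{K}_x = \mathrm{Ann}(T_x\widetilde{\mathcal{A}}) = \ker\iota^*\omega|_x$. Globalising over $\widetilde{\mathcal{A}}$ yields the proposition.
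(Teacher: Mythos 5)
Your steps 1 and 2 are fine, and your underlying mechanism --- multiplicativity of $\omega$ together with the duality pairing \eqref{eq:pairing} along units --- is the same one the paper uses. The genuine gap is in step 3, and it is the heart of the matter rather than a routine translation. Given your steps 1 and 2, the equality you still owe is, upon taking annihilators, \emph{equivalent} to the spanning property
\begin{equation*}
T_x\widetilde{\mathcal{A}}\;=\;\text{im}\,\pi_{\mathcal{A}}^\sharp|_x\;+\;\bigl(\ker T_x\mathtt{s}\cap T_x\widetilde{\mathcal{A}}\bigr)\;+\;\bigl(\ker T_x\mathtt{t}\cap T_x\widetilde{\mathcal{A}}\bigr),
\qquad x\in\widetilde{\mathcal{A}},
\end{equation*}
because the set of $w\in A^V_x$ with vanishing anchor and horizontal legs in $\text{Ann}(\widetilde{A})$ is exactly the annihilator of the right-hand side, while $\ker\iota^*\omega|_x=\text{Ann}(T_x\widetilde{\mathcal{A}})$ is the annihilator of the left-hand side. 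Your translation formula assumes this rather than proves it: it reduces $\langle w,\delta x\rangle$ to pairings of the legs of $w$ against $A$-components of $\delta x$, but to conclude you need those components to lie in $\widetilde{A}$ whenever $\delta x\in T_x\widetilde{\mathcal{A}}$, which is precisely the displayed statement. That statement is not formal: a subspace of $P+S+T$ containing $P$ need not equal $P$ plus its intersections with $S$ and $T$, so saturation plus being a subgroupoid does not suffice. What is missing is slimness, which your argument never actually invokes: infinitesimally, slimness of $\mathcal{D}$ says that $T\mathcal{A}=\text{im}\,\pi_{\mathcal{A}}^\sharp+\ker T\mathtt{s}+\ker T\mathtt{t}$, and for non-slim symplectic double groupoids the proposition is simply false. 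For instance, take $\mathcal{D}=T^*(M\times G)$, the cotangent double groupoid of the bundle of Lie groups $\mathcal{A}=M\times G$ with the zero Poisson structure, with $\mathcal{B}=M\times\mathfrak{g}^*$, $\widetilde{\mathcal{A}}=M\times H$ and $\mathcal{N}=M\times\text{Ann}(\mathfrak{h})$: every core direction in $T^*M$ has vanishing anchor and trivial horizontal legs, so the set cut out by your conditions is strictly larger than the conormal bundle of $\widetilde{\mathcal{A}}$.

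The paper sidesteps this pointwise issue by a different reduction: $\iota^*\omega$ is \emph{doubly} multiplicative on the double groupoid $\mathcal{D}|_{\widetilde{\mathcal{A}}}$ and $\mathcal{K}$ consists of vertical isotropy groups, so both $\ker\iota^*\omega$ and $\mathfrak{K}$ are transported by horizontal and vertical translations from their restrictions to the double units $N\subset M$; there the comparison is immediate because $\omega|_M$ is the duality pairing between $A$ and $B$. To complete your proof you must either establish the displayed spanning property for saturated Poisson subgroupoids of slim symplectic double groupoids (this is where slimness and saturation must both enter), or replace your step 3 by the paper's reduction to $N$.
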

\begin{proof} Using the fact that $\mathcal{D}|_{\widetilde{\mathcal{A}  } }$ is still a double groupoid, the pullback of the symplectic form to it is still doubly multiplicative and so it vanishes on $\mathcal{K} $, because $\mathcal{K} $ consists of vertical isotropy groups. To verify that $\ker \iota^*\omega $ is given by $\mathfrak{K} $, we can just check the claim at $N \subset \mathcal{D}|_{\widetilde{\mathcal{A}  } }$ and that follows from the fact that $\omega|_M $ determines the duality pairing between $A$ and $B$. \end{proof}
\begin{prop}\label{pro:quopoisub} Suppose that the right translation action of $\mathcal{N} $ on $\mathcal{B}|_N$ is principal with quotient $\widetilde{\mathcal{B} } \rightrightarrows N$ and there is a multiplicative section $\sigma: \widetilde{\mathcal{B} } \rightarrow \mathcal{B}  $ of the quotient map. If $(\mathtt{t}^H, \mathtt{s}^H): \mathcal{D}|_{\widetilde{\mathcal{A} } } \rightarrow \mathcal{B}|_N^2 $ has clean intersection with the diagonal embedding of $\sigma(\widetilde{\mathcal{B}})$, then the double Lie groupoid 
\[ \widetilde{\mathcal{D} }:= \{(x,v,u,y) \in \mathcal{D}|_{\widetilde{\mathcal{A} } }| \, u,v\in \sigma(\widetilde{\mathcal{B} } ) \} \] 
is a symplectic double subgroupoid of $\mathcal{D} $ that integrates the Poisson groupoid $\widetilde{\mathcal{A} }$. \end{prop}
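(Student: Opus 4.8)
The plan is to realize $\widetilde{\mathcal{D}}$ as a complementary transversal to the characteristic foliation of the over-symplectic groupoid $(\mathcal{D}|_{\widetilde{\mathcal{A}}},\iota^*\omega)$, in the spirit of the Lie--Dirac integration scheme of \cite{craruipoi} recalled above: rather than reducing by the null foliation, one exhibits a subgroupoid meeting each leaf cleanly. Accordingly the argument splits into two parts. First I would show that $\widetilde{\mathcal{D}}$ is a smooth double Lie subgroupoid of $\mathcal{D}$ with vertical side $\widetilde{\mathcal{A}}$; then that $\iota^*\omega$ restricts to a nondegenerate form on it.

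For the subgroupoid structure I would read off closure directly from the multiplication formulae \eqref{eq:luweimul}. The multiplicativity of $\sigma$ is exactly what makes $\sigma(\widetilde{\mathcal{B}})\subset\mathcal{B}|_N$ a subgroupoid, and this is the one nonformal input: in a vertical product $\mathtt{m}^V((x,v,u,y),(y,v',u',z))=(x,vv',uu',z)$ the horizontal data $vv',uu'$ then remain in $\sigma(\widetilde{\mathcal{B}})$, while in a horizontal product $\mathtt{m}^H((x,v,u,y),(x',w,v,y'))=(xx',w,u,yy')$ the new horizontal source and target $w,u$ are already among the constrained entries and $xx',yy'$ stay in the subgroupoid $\widetilde{\mathcal{A}}$. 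Units and inverses are handled the same way, using that $\sigma$ preserves units. Smoothness of $\widetilde{\mathcal{D}}$, cut out by the constraint $u,v\in\sigma(\widetilde{\mathcal{B}})$ on $(\mathtt{t}^H,\mathtt{s}^H)$, and the identification of $T\widetilde{\mathcal{D}}$ with the preimage of $T\sigma(\widetilde{\mathcal{B}})$ under $(T\mathtt{t}^H,T\mathtt{s}^H)$, are precisely the content of the clean-intersection hypothesis. This produces a double Lie subgroupoid with sides $\widetilde{\mathcal{A}}$ (vertical) and $\sigma(\widetilde{\mathcal{B}})\cong\widetilde{\mathcal{B}}$ (horizontal), for which $\iota^*\omega|_{\widetilde{\mathcal{D}}}$ is automatically multiplicative in both directions.

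The symplecticity is the heart of the matter. By the preceding proposition $\ker\iota^*\omega=\mathfrak{K}$, the Lie algebroid of $\mathcal{K}$; since $\mathcal{N}$, and hence $\mathcal{K}$, is source-connected, the null foliation $\mathcal{F}$ of the multiplicative presymplectic form $\iota^*\omega$ is the orbit foliation of right translation by $\mathcal{K}$, which by \eqref{eq:luweimul} moves the horizontal entries $(v,u)$ of a point of $\mathcal{D}|_{\widetilde{\mathcal{A}}}$ inside their right $\mathcal{N}$-cosets. Because the quotient map $\mathcal{B}|_N\to\widetilde{\mathcal{B}}$ by this principal action admits $\sigma$ as a section, $\sigma(\widetilde{\mathcal{B}})$ meets every $\mathcal{N}$-coset exactly once and transversally; combined with the clean-intersection description of $T\widetilde{\mathcal{D}}$ this yields the direct sum decomposition $T\widetilde{\mathcal{D}}\oplus\mathfrak{K}=T(\mathcal{D}|_{\widetilde{\mathcal{A}}})$ along $\widetilde{\mathcal{D}}$. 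I would then invoke the elementary fact that if a presymplectic form on a manifold $P$ has constant-rank kernel $K$ and $S\subset P$ satisfies $TS\oplus K=TP$ along $S$, its restriction to $S$ is nondegenerate: a radical vector $w\in TS$ is $\iota^*\omega$-orthogonal to $TS$ and, since $K=\ker\iota^*\omega$, also to $K$, hence to $TS\oplus K=TP$, so $w\in K\cap TS=0$. Thus $(\widetilde{\mathcal{D}},\iota^*\omega|_{\widetilde{\mathcal{D}}})$ is a symplectic double subgroupoid of $\mathcal{D}$; as its vertical side groupoid is $\widetilde{\mathcal{A}}$ carrying the Poisson structure restricted from $\mathcal{A}$, it integrates the Poisson groupoid $\widetilde{\mathcal{A}}$.

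I expect the main obstacle to be the precise identification of the null foliation $\mathcal{F}$ and the verification that $\sigma(\widetilde{\mathcal{B}})$ supplies a \emph{complementary} section rather than a mere transversal, i.e.\ that the dimension count closing the direct sum $T\widetilde{\mathcal{D}}\oplus\mathfrak{K}=T(\mathcal{D}|_{\widetilde{\mathcal{A}}})$ holds pointwise. This is where all the hypotheses are used in concert: the preceding proposition to know $\ker\iota^*\omega=\mathfrak{K}$, source-connectedness to pass from the algebroid $\mathfrak{K}$ to the leaves of $\mathcal{F}$, principality of the $\mathcal{N}$-action so that the cosets form a clean fibration with $\sigma$ as section, and the clean-intersection hypothesis to control $T\widetilde{\mathcal{D}}$. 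Once complementarity is established, nondegeneracy and the rest of the statement are formal.
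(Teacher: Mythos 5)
Your proposal is correct and rests on the same two pillars as the paper's own proof --- the identification $\ker\iota^*\omega=\mathfrak{K}$ from the preceding proposition and the complementarity supplied by the multiplicative section $\sigma$ --- but it globalizes nondegeneracy by a genuinely different mechanism. The paper verifies nondegeneracy of $\omega|_{\widetilde{\mathcal{D}}}$ only along the units $\widetilde{\mathcal{A}}$, where the preceding proposition applies verbatim (the trivial intersection of the Lie algebroid of $\widetilde{\mathcal{D}}\rightrightarrows\widetilde{\mathcal{A}}$ with $\mathfrak{K}$ is read off ``by construction''), and then propagates it to all of $\widetilde{\mathcal{D}}$ by multiplicativity of the restricted 2-form; you instead identify the full null foliation of $\iota^*\omega$ with the orbit foliation of right $\mathcal{K}$-translation and close the pointwise splitting $T\widetilde{\mathcal{D}}\oplus\mathfrak{K}=T(\mathcal{D}|_{\widetilde{\mathcal{A}}})$, after which nondegeneracy is elementary linear algebra. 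These are equivalent in substance: your global kernel identification is itself a multiplicativity statement (the kernel of a constant-rank multiplicative 2-form is the right-translate of its value at units, which is where source-connectedness of $\mathcal{N}$ enters), so you have front-loaded the multiplicativity into the kernel description rather than into the propagation of nondegeneracy. The paper's variant is shorter because it never needs the kernel away from units, which is literally what the preceding proposition provides; yours is more self-contained once the global statement is granted, and the complementarity/dimension count you rightly flag as the real work is equally implicit in the paper's ``by construction''. Your explicit verification that $\widetilde{\mathcal{D}}$ is a smooth double Lie subgroupoid (closure via multiplicativity of $\sigma$, smoothness via clean intersection) is a useful addition the paper leaves tacit. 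One point to repair: your final claim, that the vertical side $\widetilde{\mathcal{A}}$ ``carries the Poisson structure restricted from $\mathcal{A}$'' and hence is integrated by $\widetilde{\mathcal{D}}$, is asserted rather than proved --- this is exactly what the paper's last sentence establishes by observing that the duality pairing between $\widetilde{A}$ and $B|_N/\text{Ann}(\widetilde{A})$ is the one induced by $\omega$; you should either reproduce that pairing argument or invoke the correspondence between symplectic subgroupoids and Lie-Dirac submanifolds \cite[Thm. 9]{craruipoi} to conclude that the Poisson structure induced on the base agrees with the restricted one.
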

\begin{rema} In the terminology of \cite{craruipoi}, $\widetilde{\mathcal{A} }$ is then a {\em Lie-Dirac submanifold}, being the infinitesimal counterpart of a symplectic subgroupoid, see \cite[Thm. 9]{craruipoi}. Lie-Dirac submanifolds were introduced in \cite{dirsub}, where they were called Dirac submanifolds. Interestingly, not every Poisson submanifold is a Lie-Dirac submanifold \cite{craruipoi,dirsub}. It is easy to find situations where the anchors of $\widetilde{A}$ and $\widetilde{B} $ are not transverse, hence Proposition \ref{pro:quopoisub} could lead to integrating Manin triples for which \cite[Thm. 4.10]{tracou} does not apply. \end{rema} 

\begin{proof}[Proof of Proposition \ref{pro:quopoisub}] By construction, the Lie algebroid of $\widetilde{\mathcal{D} }$ over $\widetilde{\mathcal{A} }$ has trivial intersection with $\mathfrak{K} $, we only need to show that $\omega $ restricts to a nondegenerate 2-form along $\widetilde{\mathcal{A} }\subset \widetilde{\mathcal{D} }$; in fact, by multiplicativity it has to be nondegenerate globally. Since the duality pairing between the Lie algebroids $\widetilde{A} $ and $B|_N/ \text{Ann}(\widetilde{A} )$ is the one induced by $\omega$, $\widetilde{\mathcal{D} }$ integrates $\widetilde{\mathcal{A} }$ as desired. Note that $T \widetilde{\mathcal{D}}+\mathfrak{k}  =T (\mathcal{D}|_{\widetilde{A} }) $ along $\widetilde{A}$, since $\sigma(\widetilde{\mathcal{B} } )$ is a slice for the quotient map $\mathcal{B}|_N \rightarrow  \widetilde{\mathcal{B} } $. Since Proposition \ref{pro:ker} tells us that $\mathfrak{k}=\ker \iota^*\omega|_{\widetilde{\mathcal{A} } } $, $\omega $ is nondegenerate when restricted to $T \widetilde{\mathcal{D} }|_{\widetilde{\mathcal{A} } } $, and then its multiplicativity implies that it is nondegenerate globally. \end{proof}
\subsubsection{The $n$th total configuration Poisson groupoid of flags $\Gamma_{2n} \rightrightarrows F_n$ as a Lie-Dirac submanifold of $\widetilde{F}_{2n} \rightrightarrows F_n$} Now we are going to use our integration of $\widetilde{F}_{2n} \rightrightarrows F_n$ to obtain an integration of $\Gamma_{2n} \rightrightarrows F_n$. 
\begin{thm}\label{thm:inttotcon} We have that $\Gamma_{2n} \rightrightarrows F_n$ is a Lie-Dirac submanifold of $\widetilde{F}_{2n} \rightrightarrows F_n$ which is integrable by a complex algebraic symplectic double subgroupoid of $\mathcal{G}_{2n}$. \end{thm}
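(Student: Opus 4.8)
The plan is to apply the slim double groupoid machinery culminating in Proposition \ref{pro:quopoisub}. By Proposition \ref{pro:slim} the symplectic double groupoid $\mathcal{G}_{2n}$ is slim, so it takes the form \eqref{eq:luwei1} with side groupoids $\mathcal{A}=\widetilde{F}_{2n}$ (vertical) and $\mathcal{B}=\widetilde{F}_{2n}^*$ (horizontal) over $M=F_n$, and the pairing \eqref{eq:pairing} identifies $B$ with $A^*$. I would set $\widetilde{\mathcal{A}}=\Gamma_{2n}$ and verify the hypotheses of Proposition \ref{pro:quopoisub}: that $\Gamma_{2n}$ is a saturated Poisson subgroupoid, that its annihilator is integrated by a normal source-connected annihilator subgroupoid $\mathcal{N}\subset\widetilde{F}_{2n}^*$, that the right $\mathcal{N}$-action is principal and admits a multiplicative section $\sigma$, and that the relevant clean-intersection condition holds. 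The output $\widetilde{\mathcal{D}}=\mathcal{H}_{2n}$ is then the desired symplectic double subgroupoid, and $\Gamma_{2n}$ is Lie-Dirac by \cite[Thm. 9]{craruipoi}.

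Saturation holds because $\Gamma_{2n}$ is a Poisson subgroupoid of $\widetilde{F}_{2n}$ by \eqref{eq:confla}, hence a union of symplectic leaves, and these leaves are exactly the $\mathcal{D}$-orbits in the sense of Proposition \ref{cor:symorb}. For the algebroid data I would use the moduli picture of \S\ref{subsec:decdis}: $\Gamma_{2n}$ is cut out of $\widetilde{F}_{2n}$ by decorating the single edge joining $v_1$ and $v_{2n+1}$ with $B_-$ rather than $G$, so its Lie algebroid $\widetilde{A}\subset A$ has corank $|\Delta_+|$ and is obtained by replacing $\mathfrak{g}$ by $\mathfrak{b}_-$ along that edge. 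Dualizing through \eqref{eq:pairing}, $\mathrm{Ann}(\widetilde{A})\subset B$ is the rank-$|\Delta_+|$ bundle modeled on the nilpotent complement $\mathfrak{g}/\mathfrak{b}_-\cong\mathfrak{n}_+$; being tangent to the fibres of the dual groupoid and closed under the bracket, it is a bundle of ideals inside $\ker(\mathtt{a}_B)$, and it integrates to a bundle-of-groups subgroupoid $\mathcal{N}$ with unipotent fibres, automatically normal and source-connected since unipotent groups are simply connected.

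With $\mathcal{N}$ identified, principality of the right $\mathcal{N}$-action on $\widetilde{F}_{2n}^*$ and the existence of an algebraic multiplicative section $\sigma$ of $\widetilde{F}_{2n}^*\to\widetilde{\mathcal{B}}$ should follow from the global triangular slice furnished by $B_-$, i.e.\ the same Gauss-cell mechanism $N_-TN_+$ used at the end of the proof of Theorem \ref{thm:symdoumor}. This explicit, algebraic choice of slice is what guarantees $\mathcal{H}_{2n}$ remains a complex algebraic variety, in contrast with the transcendental construction of \cite{tracou}: indeed $\mathcal{H}_{2n}$ is cut out of the algebraic variety $\mathcal{G}_{2n}$ (Proposition \ref{pro:slim}) by the algebraic conditions $u,v\in\sigma(\widetilde{\mathcal{B}})$ of \eqref{eq:luwei3}. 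Proposition \ref{pro:quopoisub} then delivers the symplectic double subgroupoid $\mathcal{H}_{2n}\subset\mathcal{G}_{2n}$ integrating $\Gamma_{2n}$.

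The main obstacle I anticipate is the nondegeneracy underlying Proposition \ref{pro:quopoisub}, namely $\widetilde{A}\cap\mathfrak{K}=0$: one must show that the restriction $\iota^*\omega$ of the symplectic form to $\mathcal{G}_{2n}|_{\Gamma_{2n}}$ becomes nondegenerate along the slice $\mathcal{H}_{2n}$. This is precisely the property separating a \emph{Lie-Dirac} submanifold from a mere Poisson submanifold, and I would reduce it to a root-theoretic transversality statement: the complement to $\mathfrak{b}_-$ must pair nondegenerately with $\sigma(\widetilde{\mathcal{B}})$, which follows from the nondegeneracy of $\langle\,,\,\rangle$ between $\mathfrak{n}_+$ and $\mathfrak{n}_-$ in the triangular decomposition $\mathfrak{g}=\mathfrak{n}_-\oplus\mathfrak{t}\oplus\mathfrak{n}_+$. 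A secondary point is the clean-intersection/smoothness hypothesis ensuring $\mathcal{H}_{2n}$ is an embedded submanifold; I expect this to reduce, as for $\mathcal{G}_{2n}$ itself, to the product-of-subgroups form of the decorations in Remark \ref{rem:dec dou sur}.
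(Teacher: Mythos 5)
Your overall strategy is the paper's: run Proposition \ref{pro:quopoisub} on the slim double groupoid $\mathcal{G}_{2n}$ of Proposition \ref{pro:slim}, with $\widetilde{\mathcal{A}}=\Gamma_{2n}$. However, the execution has genuine gaps, and they sit exactly where the paper does its real work, namely Lemma \ref{lem:ann}. First, the annihilator: dualizing $A/\widetilde{A}\cong\mathfrak{g}/\mathfrak{b}_-$ gives $(\mathfrak{g}/\mathfrak{b}_-)^*\cong\mathfrak{b}_-^{\perp}=\mathfrak{n}_-$, embedded in $\mathfrak{g}^*\subset\mathfrak{b}_+\oplus\mathfrak{b}_-$ as $0\oplus\mathfrak{n}_-$; your identification with ``$\mathfrak{g}/\mathfrak{b}_-\cong\mathfrak{n}_+$'' drops the dual, and the $\mathfrak{n}_+$-part $\mathfrak{n}_+\oplus 0$ of $\mathfrak{g}^*$ does \emph{not} annihilate the diagonal $\mathfrak{b}_-$ (since $\langle\mathfrak{n}_-,\mathfrak{n}_+\rangle\neq 0$). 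While the abstract isomorphism type (nilpotent, rank $|\Delta_+|$) survives this slip, the concrete embedding is what defines $\mathcal{N}_{2n}$, the quotient, and the section, so it cannot be left ambiguous. More importantly, the pairing \eqref{eq:pairing} is defined by the moduli-theoretic symplectic form $\Omega_{2n}$, not a priori by the Manin triple $\mathfrak{d}=\mathfrak{g}_\Delta\oplus\mathfrak{g}^*$; the paper's Lemma \ref{lem:ann} computes it through the triangulation formula of \S\ref{subsec:mom map surf} and gets the vanishing from the fact that the 2-form \eqref{eq:polwie} vanishes on $B_-\times N_-\subset G^2$, then concludes by rank count. Your ``root-theoretic transversality'' presupposes that \eqref{eq:pairing} agrees with the Lie-theoretic pairing, which is precisely the statement requiring proof.

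Second, the multiplicative section. You derive $\sigma$ from ``the Gauss-cell mechanism $N_-TN_+$,'' but the Gauss cell is only a dense open subset of $G$, so it can at best give a section over a dense open set, which is useless for Proposition \ref{pro:quopoisub}: a globally defined multiplicative algebraic section is needed. The actual section is the global splitting $G^*\cong N_-\rtimes B_+$ (each $(a,b)\in G^*$ factors uniquely as $(1,b\,t_a)\cdot(a,t_a^{-1})$ with $t_a=\mathrm{pr}_T(a)$), which produces the wide subgroupoid $\Gamma_{2n}^*=[(B_+\times N_+^{n-1})\times_{B_+^n}G^n]$ of Lemma \ref{lem:ann}; in the paper the Gauss cell enters only in proving surjectivity of moment maps (Theorems \ref{thm:symdoumor} and \ref{thm:resmorequ}). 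A smaller but real issue: your saturation argument equates symplectic leaves with $\mathcal{D}$-orbits, an identification the paper explicitly flags as unclear in \S\ref{subsec:symfol} (the source-connectedness of the fibres of $\mathcal{G}_{2n}$ is unknown, so orbits may be strictly larger than leaves); the paper avoids this by verifying smoothness of the restricted double groupoid directly, via transversality of the foliations of $\Gamma_{2n}$ and $\Gamma_{2n}^*$.
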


To prove this result we combine the description of the groupoid structure on $\widetilde{F}_{2n}^* \rightrightarrows F_n$ in terms of explicit formulas given in Propositions \ref{pro:expduapoi} and \ref{pro:slim} with Proposition \ref{pro:quopoisub}.
\begin{lem}\label{lem:ann} The annihilator subgroupoid $\mathcal{N}_{2n} \subset \widetilde{F}_{2n}^*  $ of the total configuration groupoid of flags $\Gamma_{2n} \rightrightarrows F_n$ is isomorphic to the groupoid $N_- \times F_n \rightrightarrows F_n$, viewed as a bundle of Lie groups, and the quotient map $\widetilde{F}_{2n}^* \rightarrow \widetilde{F}_{2n}^*/\mathcal{N}_{2n}$ has a section given by the canonical inclusion of the wide subgroupoid of $\widetilde{F}_{2n}^*$:
\[ \Gamma_{2n}^*:=[(B_+ \times N_+^{n-1} )\times_{B_+^n} G^n] \subset  \widetilde{F}_{2n}^*,\] 
where we view $\widetilde{F}_{2n}^*$ as in Proposition \ref{pro:expduapoi}. \end{lem}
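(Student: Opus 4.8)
The plan is to verify directly the hypotheses of Proposition \ref{pro:quopoisub} in the case $\mathcal{A}=\widetilde{F}_{2n}$, $\mathcal{B}=\widetilde{F}_{2n}^*$, $\widetilde{\mathcal{A}}=\Gamma_{2n}$ (so that $N=F_n$, since $\Gamma_{2n}$ is wide), and then to exhibit $\Gamma_{2n}^*$ as the complementary wide subgroupoid. First I would identify the Lie algebroid $\widetilde{A}\subset A$ of $\Gamma_{2n}$. By construction $\Gamma_{2n}=\mathfrak{M}_G(\Sigma_{2n},V_{2n})_{H_{2n}^-,\mathcal{A}_{2n}}$ is cut out from $\widetilde{F}_{2n}$ by demanding that the value on the last boundary edge lie in $B_-$; hence along the units $F_n$ the block $A/\widetilde{A}$ of the isotropy is exactly the $\mathfrak{g}/\mathfrak{b}_-\cong\mathfrak{n}_+$ direction in which this edge leaves $B_-$. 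Dualizing by means of the pairing \eqref{eq:pairing} determined by the symplectic form on $\mathcal{G}_{2n}$ (equivalently, the Manin-triple pairing identifying $B=A^*$), one gets $\operatorname{Ann}(\widetilde{A})=(A/\widetilde{A})^*$, which under $\mathfrak{g}^*\cong\mathfrak{g}$ via $\langle\,,\,\rangle$ becomes $\mathfrak{b}_-^{\perp}=\mathfrak{n}_-$. Thus $\operatorname{Ann}(\widetilde{A})$ is the trivial Lie algebra bundle with fibre $\mathfrak{n}_-$ over $F_n$.

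Next I would check that $\operatorname{Ann}(\widetilde{A})$ is a bundle of ideals lying in $\ker \mathtt{a}_B$. That it lies in $\ker\mathtt{a}_B$ follows because these $\mathfrak{n}_-$-directions are isotropy directions of $\widetilde{F}_{2n}^*$ rather than base-moving ones, which one reads off from the explicit anchor of Proposition \ref{pro:expduapoi}. Normality reduces, after the $B_+^n$-reduction, to the fact that $\mathfrak{n}_-$ is an ideal of $\mathfrak{b}_-$ and that $\{1\}\times N_-$ is normal in the position-$1$ decoration group $G^*=\{(a,b)\in B_+\times B_-\ |\ \text{pr}_T(a)\text{pr}_T(b)=1\}$, since $N_-\trianglelefteq B_-$. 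The source-connected integration of this bundle of ideals is then a bundle of groups with fibre $N_-$; as $N_-$ is unipotent, hence closed and simply connected, the bundle is trivial, giving the asserted annihilator subgroupoid $\mathcal{N}_{2n}\cong N_-\times F_n\rightrightarrows F_n$.

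Finally, for the section I would use the explicit models of Propositions \ref{pro:expduapoi} and \ref{pro:slim} to present both $\widetilde{F}_{2n}^*$ and $\Gamma_{2n}^*=[(B_+\times N_+^{n-1})\times_{B_+^n}G^n]$ as wide subgroupoids over $F_n$, and prove the unique factorization $\widetilde{F}_{2n}^*=\Gamma_{2n}^*\cdot\mathcal{N}_{2n}$ with $\Gamma_{2n}^*\cap\mathcal{N}_{2n}$ equal to the units. Because $\mathcal{N}_{2n}$ acts only through the position-$1$ slot, this reduces to the group-level statement that $G^*=B_+^{\mathrm{sec}}\cdot(\{1\}\times N_-)$ is a semidirect product, where $B_+^{\mathrm{sec}}=\{(a,\text{pr}_T(a)^{-1})\ |\ a\in B_+\}\cong B_+$: writing $b=\text{pr}_T(b)\,n_-$ according to $B_-=T\ltimes N_-$ and using $\text{pr}_T(a)\text{pr}_T(b)=1$, every $(a,b)\in G^*$ factors uniquely through $B_+^{\mathrm{sec}}$ and $\{1\}\times N_-$, the latter being normal. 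A dimension count ($\dim\widetilde{F}_{2n}^*-\dim\Gamma_{2n}^*=\dim N_-$) confirms the codimensions match. Descending this factorization through the $B_+^n$-reduction then shows that $\Gamma_{2n}^*\hookrightarrow\widetilde{F}_{2n}^*$ is a multiplicative section of $\widetilde{F}_{2n}^*\to\widetilde{F}_{2n}^*/\mathcal{N}_{2n}$.

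The main obstacle I anticipate is the dualization step: pinning down $\operatorname{Ann}(\widetilde{A})$ precisely inside the explicit model of $\widetilde{F}_{2n}^*$ and verifying \emph{simultaneously} that it lands in $\ker\mathtt{a}_B$ and is conjugation-invariant, since this requires tracking the $\omega$-duality through the moment-map reduction rather than merely at the Lie-algebra level. The second delicate point is establishing that the position-$1$ factorization is genuinely multiplicative over all of $F_n$ at once, i.e. a section of groupoids and not just a fibrewise set-theoretic splitting.
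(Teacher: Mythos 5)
Your skeleton is the right one, and much of it matches the paper: the annihilator is indeed the constant bundle $\mathfrak{n}_-\times F_n$ sitting inside the action algebroid $(\mathfrak{g}^*\times\mathfrak{n}_+^{n-1})\ltimes F_n$ of Proposition \ref{pro:expduapoi}, its integration is $N_-\times F_n$, and your semidirect factorization $G^*=B_+^{\mathrm{sec}}\ltimes(\{1\}\times N_-)$, pushed through the $B_+^n$-reduction, is exactly how the paper produces the section $\Gamma_{2n}^*$ (the paper compresses this into ``the last claim follows directly from Proposition \ref{pro:expduapoi}'', i.e.\ from the multiplication rule \eqref{eq:grostrdua}). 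The genuine gap is the dualization step, and you have diagnosed it yourself without closing it. You write that the pairing \eqref{eq:pairing} is ``equivalently, the Manin-triple pairing identifying $B=A^*$'', and from this you read off $\text{Ann}(\widetilde{A})\cong\mathfrak{n}_-$ purely Lie-theoretically. But that equivalence is precisely what must be proved: the identifications of $A$ with the algebroid of $\widetilde{F}_{2n}$ and of $B$ with the explicit action algebroid arise from two separate moduli-theoretic constructions, and there is no a priori reason that the duality induced by $\Omega_{2n}$ takes the naive form in these coordinates. Asserting it, and then listing it as the ``main obstacle,'' leaves the inclusion $\mathfrak{n}_-\times F_n\subseteq\text{Ann}(\gamma_{2n})$ (where $\gamma_{2n}$ is the algebroid of $\Gamma_{2n}$) unproven, so the hypotheses of Proposition \ref{pro:quopoisub} are never actually verified.

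The paper's proof consists essentially of resolving this single point, and it does so without ever computing the full pairing or inverting the duality $B\cong A^*$ in coordinates: a vanishing argument plus a rank count suffice. One chooses a triangulation $\mathcal{T}$ of $(\widehat{\Sigma}_{2n},\widehat{V}_{2n})$ so that the marked points over the $\mathfrak{g}^*$-decorated vertex $v_1$ span two adjacent triangles $T_1,T_2$ (Fig.~\ref{fig:triang}). Since $\Omega_{2n}$ is assembled, per \S\ref{subsec:mom map surf}, as a sum over triangles of pullbacks of the $2$-form $\omega$ of \eqref{eq:polwie}, and since an element $\xi\in\mathfrak{n}_-\subset\mathfrak{g}^*$ only affects the coordinates attached to edges adjacent to the vertices over $v_1$, only $T_1$ and $T_2$ can contribute to $\Omega_{2n}(T\mathtt{u}^V(u),T\mathtt{u}^H(\xi))$ for $u\in\gamma_{2n}$. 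On those two factors the relevant edges carry values in $B_-$ (the arcs $b_1,b_2$ joining $\iota_1(\widehat{v}_1)$ to $\iota_2(\widehat{v}_1)$, and their primed counterparts) and in $N_-$ (the boundary arcs joining $\iota_i(\widehat{v}_1)$ to $\iota_i(\widehat{v}_1')$), and $\omega$ vanishes identically on $B_-\times N_-\subset G^2$ because $\langle\mathfrak{b}_-,\mathfrak{n}_-\rangle=0$. Hence the pairing vanishes, so $\mathfrak{n}_-\times F_n\subseteq\text{Ann}(\gamma_{2n})$, and equality follows since both have rank $\dim\mathfrak{n}_+$, the corank of $\gamma_{2n}$. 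To complete your write-up along your own lines you would need this computation (or an equivalent one tracking the $\omega$-duality through the reduction); the remaining items in your plan --- the anchor and normality checks via \eqref{eq:grostrdua}, triviality of the bundle of groups by unipotency, and multiplicativity of the section --- are sound and consistent with the paper.
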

\begin{proof} 
For each of the vertices $v_i$ in $V_{0,n}$, denote by $\widehat{v}_i$ and $\widehat{v}_i'$ its two images in $\widehat{\Sigma}_{0,n}$ and let $\iota_i:\widehat{\Sigma}_{0,n} \hookrightarrow \widehat{\Sigma}_{2n}$ be the two inclusions. Suppose that $v_1$ is decorated with the Lie algebra $\mathfrak{g}^* $ as in \eqref{eq:dec halfsur}. Take a triangulation $\mathcal{T} $ of $(\widehat{\Sigma}_{2n},\widehat{V}_{2n} )$ such that $\{\iota_1(\widehat{v}_1),\iota_1(\widehat{v}_1'),\iota_2(\widehat{v}_1')\}$ and $\{\iota_2(\widehat{v}_1),\iota_2(\widehat{v}_1'),\iota_1(\widehat{v}_1)\}$ constitute the vertices of two adjacent triangles, respectively denoted by $T_1$ and $T_2$ in what follows, see Fig.~\ref{fig:triang}. The reflection with respect to the dotted vertical axis of symmetry in Fig.~\ref{fig:triang} exchanges the images of $\iota_1$ and $\iota_2$. 
\begin{figure}[H] 
\begin{tikzpicture}[semithick, line cap=round, line join=round, scale=0.8]
  \def\r{0.2} 

  \coordinate (ET) at (0, 3);   
  \coordinate (EB) at (0, -3);  

  \coordinate (C1T) at (-3, \r);  
  \coordinate (C1B) at (-3, -\r); 
  \coordinate (C2T) at (-1, \r);  
  \coordinate (C2B) at (-1, -\r); 
  \coordinate (C3T) at (1, \r);   
  \coordinate (C3B) at (1, -\r);  
  \coordinate (C4T) at (3, \r);   
  \coordinate (C4B) at (3, -\r);  

  \draw[thick] (0,0) ellipse (5 and 3);
  \foreach \x in {-3,-1,1,3} {
    \draw[thick] (\x,0) circle (\r);
  }

  \draw[dotted, thin] (ET) -- (EB);

  \draw[dashed] (ET) to[bend right=30] (C1T); 
  \draw[dashed] (ET) to[bend right=10] (C2T); 
  \draw[dashed] (ET) to[bend left=10] (C3T);  
  \draw[dashed] (ET) to[bend left=30] (C4T);  

  \draw[dashed] (EB) to[bend left=30] (C1B);  
  \draw[dashed] (EB) to[bend left=10] (C2B);  
  \draw[dashed] (EB) to[bend right=10] (C3B); 
  \draw[dashed] (EB) to[bend right=30] (C4B); 

  \draw[dashed] (EB) to[out=175,in=190, looseness=1.5] (C1T);  
  \draw[dashed] (EB) to[out=5, in=350, looseness=1.5] (C4T); 

  \draw[dashed] (C1T) to[bend left=20] (C2T);
  \draw[dashed] (C1B) to[bend right=20] (C2B);
  
  \draw[dashed] (C2T) to[bend left=20] node[midway, above] {$b_1$} (C3T);
  \draw[dashed] (C2B) to[bend right=20] node[midway, below] {$b_2$} (C3B);
  
  \draw[dashed] (C3T) to[bend left=20] (C4T);
  \draw[dashed] (C3B) to[bend right=20] (C4B);

  \draw[dashed] (C1T) to[bend left=10] (C2B);
  \draw[dashed] (C3T) to[bend left=10] (C4B);
  \draw[dashed] (C2B) to[bend right=10] (C3T);

  \begin{scriptsize}
    \fill (ET) circle (1.5pt) node[above=4pt] {$\iota_1(\widehat{v}_3')=\iota_2(\widehat{v}_3')$};
    \fill (EB) circle (1.5pt) node[below=4pt] {$\iota_1(\widehat{v}_3)=\iota_2(\widehat{v}_3)$};
    
    \fill (C1T) circle (1pt) node[above left=4pt, anchor=east] {$\iota_1(\widehat{v}_2')$};
    \fill (C1B) circle (1pt) node[below left=4pt, anchor=east] {$\iota_1(\widehat{v}_2)$};
    
    \fill (C2T) circle (1pt) node[above left=2pt] {$\iota_1(\widehat{v}_1')$};
    \fill (C2B) circle (1pt) node[below left=2pt] {$\iota_1(\widehat{v}_1)$};
    
    \fill (C3T) circle (1pt) node[above right=2pt] {$\iota_2(\widehat{v}_1')$};
    \fill (C3B) circle (1pt) node[below right=2pt] {$\iota_2(\widehat{v}_1)$};
    
    \fill (C4T) circle (1pt) node[above right=4pt, anchor=west] {$\iota_2(\widehat{v}_2')$};
    \fill (C4B) circle (1pt) node[below right=4pt, anchor=west] {$\iota_2(\widehat{v}_2)$};
  \end{scriptsize}
\end{tikzpicture}
\caption{The dashed edges together with the boundary edges define a triangulation of $\widehat{\Sigma}_{4} $}
    \label{fig:triang}
\end{figure}
The Lie algebroid $\nu_{2n}:=\mathfrak{n}_- \times F_n$ of $\mathcal{N}_{2n}$ is embedded in the action Lie algebroid $(\mathfrak{g}^* \times \mathfrak{n}_+^{n-1}) \ltimes F_n $ of $\widetilde{F}_{2n}^*$ via the inclusion $\mathfrak{n}_-\subset \mathfrak{g}^*$. In order to compute the pairing between $\nu_{2n}$ and the Lie algebroid $\gamma_{2n}$ of $\Gamma_{2n} \rightrightarrows F_n$, we take the symplectic form $\Omega_{2n}$ on $\mathcal{G}_{2n}$ and apply it to $ (u,\xi)\in \gamma_{2n}\oplus \nu_{2n}  $ embedded in $T \mathcal{G}_{2n}$ as in \eqref{eq:pairing}. According to \S\ref{subsec:mom map surf}, such a symplectic form is determined by embedding $\hom(\pi_1(\widehat{\Sigma}_{2n},\widehat{V}_{2n} ),G )\subset (G^2)^{k}$ using the orientation of the edges of $\mathcal{T} $ inherited from $\widehat{\Sigma}_{2n} $ and then pulling back $\omega $ as in \eqref{eq:polwie} to each $G^2$ factor in $G^{2k}$, where $k$ is the number of triangles in $\mathcal{T}$. It follows then that the only triangles contributing a potentially non-zero term in $\Omega_{2n}(T\mathtt{u}^V(u),T\mathtt{u}^H(\xi)  )$ are $T_1 $ and $T_2$. But by construction, the vertical boundary edges joining $\iota_i(\widehat{v}_1)$ and $\iota_i(\widehat{v}_1')$ in Fig.~\ref{fig:triang} are decorated with $N_-$. On the other hand, the edges joining $\iota_1(\widehat{v}_1)$ with $\iota_2(\widehat{v}_1)$ (and $\iota_1(\widehat{v}_1')$ with $\iota_2(\widehat{v}_1')$) are decorated with $B_-$, these edges are labelled with $b_1,b_2$ in Fig.~\ref{fig:triang}. Since $\omega $ vanishes on $B_- \times N_-\subset G^2$, we have then that $\Omega_{2n}(T\mathtt{u}^V(u),T\mathtt{u}^H(\xi)  )=0$. So $\nu_{2n}$ lies in the annihilator of $\gamma_{2n}$ but since it has the same rank it must coincide with it. The last claim follows directly from Proposition \ref{pro:expduapoi}. 
 \end{proof}  
\begin{proof}[Proof of Theorem \ref{thm:inttotcon}] According to Prop.~\ref{pro:slim}, $\mathcal{G}_{2n}$ is a slim double groupoid with algebraic structure maps, so the result follows from applying  Prop.~\ref{pro:quopoisub} to $\Gamma_{2n}^*$. The smoothness of the restricted double groupoid follows directly from considering generators for the fundamental groupoids (or from observing that the foliations of $\Gamma_{2n}$ and $\Gamma_{2n}^* $ are still transverse). \end{proof}
We shall denote by $\mathcal{H}_{2n}$ (with sides $\Gamma_{2n}$ and $\Gamma_{2n}^*$) the resulting symplectic double groupoid from Theorem \ref{thm:inttotcon}. It follows immediately from Theorem \ref{thm:inttotcon} that the following result holds.
\begin{coro}\label{thm:insspe} If $\Gamma^{(\mathbf{u},\mathbf{u}^{-1})  } \rightrightarrows \mathcal{O}^{\mathbf{u} }$ is open in $\Gamma_{2n}$, then it is integrable by a complex algebraic double subgroupoid of $\mathcal{H}_{2n}$. \qed\end{coro}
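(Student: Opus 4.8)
The plan is to obtain the required integration by simply restricting the symplectic double groupoid $\mathcal{H}_{2n}$ of Theorem~\ref{thm:inttotcon} to the open locus lying over $\mathcal{O}^{\mathbf{u}}$; because the hypothesis makes this locus \emph{open}, no reduction by a characteristic foliation is needed and everything restricts on the nose.

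First I would record that the hypothesis ``$\Gamma^{(\mathbf{u},\mathbf{u}^{-1})}$ is open in $\Gamma_{2n}$'' is equivalent to ``$\mathcal{O}^{\mathbf{u}}$ is open in $F_n$''. Indeed, the source map $\mathtt{s}\colon \Gamma_{2n}\to F_n$ of the side groupoid $\Gamma_{2n}\rightrightarrows F_n$ is a submersion, hence open, and restricts to a surjection $\Gamma^{(\mathbf{u},\mathbf{u}^{-1})}\to\mathcal{O}^{\mathbf{u}}$, so openness of the former forces openness of the latter; conversely $\Gamma^{(\mathbf{u},\mathbf{u}^{-1})}=\mathtt{s}^{-1}(\mathcal{O}^{\mathbf{u}})\cap\mathtt{t}^{-1}(\mathcal{O}^{\mathbf{u}})$ is the preimage of an open set. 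In particular $\Gamma^{(\mathbf{u},\mathbf{u}^{-1})}$ carries a Poisson structure as an open Poisson submanifold of $\Gamma_{2n}$ and is an open Poisson subgroupoid over $\mathcal{O}^{\mathbf{u}}$.

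Next I would define the candidate integration as
\[ \mathcal{H}_{2n}^{\mathbf{u}}:=(\mathtt{s}^V)^{-1}\big(\Gamma^{(\mathbf{u},\mathbf{u}^{-1})}\big)\cap(\mathtt{t}^V)^{-1}\big(\Gamma^{(\mathbf{u},\mathbf{u}^{-1})}\big)\subset\mathcal{H}_{2n}. \]
Writing an element in the slim presentation \eqref{eq:luwei1} as $(x,v,u,y)$ with $x=\mathtt{t}^V$, $y=\mathtt{s}^V\in\Gamma_{2n}$ and $u=\mathtt{t}^H$, $v=\mathtt{s}^H\in\Gamma_{2n}^*$, this is exactly the locus where the four corner points of the square in $F_n$ all lie in $\mathcal{O}^{\mathbf{u}}$; the compatibility relations of \eqref{eq:luwei1} then also force $u,v\in\Gamma_{2n}^*|_{\mathcal{O}^{\mathbf{u}}}$. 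Since $\mathtt{s}^V,\mathtt{t}^V$ are continuous and $\Gamma^{(\mathbf{u},\mathbf{u}^{-1})}$ is open, $\mathcal{H}_{2n}^{\mathbf{u}}$ is open in $\mathcal{H}_{2n}$, hence a complex algebraic subvariety on which the restriction of the symplectic form is again symplectic. I would then verify that $\mathcal{H}_{2n}^{\mathbf{u}}$ is a double subgroupoid with vertical side $\Gamma^{(\mathbf{u},\mathbf{u}^{-1})}$ and horizontal side $\Gamma_{2n}^*|_{\mathcal{O}^{\mathbf{u}}}$. For the vertical structure this is the restriction of $\mathcal{H}_{2n}\rightrightarrows\Gamma_{2n}$ to the open subset $\Gamma^{(\mathbf{u},\mathbf{u}^{-1})}$ of its unit manifold, so it is automatically an open symplectic subgroupoid and integrates the open Poisson submanifold $\Gamma^{(\mathbf{u},\mathbf{u}^{-1})}$. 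For the horizontal structure I would use that $\Gamma^{(\mathbf{u},\mathbf{u}^{-1})}$ is a subgroupoid of the horizontal side $\Gamma_{2n}\rightrightarrows F_n$ together with the fact that $\mathtt{s}^V,\mathtt{t}^V$ are morphisms for the horizontal structures; concretely, by \eqref{eq:luweimul} the horizontal product $(xx',w,u,yy')$ remains in $\mathcal{H}_{2n}^{\mathbf{u}}$ because $x,x',y,y'\in\Gamma^{(\mathbf{u},\mathbf{u}^{-1})}$ and the latter is closed under composition, with the analogous statements for horizontal units and inverses.

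The only point that needs a little care — and where I expect the (entirely routine) bookkeeping to go — is checking simultaneous saturation of the open locus under \emph{both} the vertical and the horizontal multiplications. This is immediate from the corner description: both multiplications only permute and compose the four corner points among themselves, and the condition ``all corners in $\mathcal{O}^{\mathbf{u}}$'' is manifestly preserved. Thus no genuine obstacle arises, precisely because openness of $\Gamma^{(\mathbf{u},\mathbf{u}^{-1})}$ replaces the characteristic-foliation reduction of Theorem~\ref{thm:inttotcon} by a plain open restriction, and $\mathcal{H}_{2n}^{\mathbf{u}}$ is the desired complex algebraic symplectic double subgroupoid of $\mathcal{H}_{2n}$ integrating $\Gamma^{(\mathbf{u},\mathbf{u}^{-1})}\rightrightarrows\mathcal{O}^{\mathbf{u}}$.
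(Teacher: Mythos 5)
Your proposal is correct and follows essentially the same route as the paper, which states this corollary with no separate proof precisely because openness of $\Gamma^{(\mathbf{u},\mathbf{u}^{-1})}$ in $\Gamma_{2n}$ makes it an immediate consequence of Theorem \ref{thm:inttotcon}: one simply restricts the symplectic double groupoid $\mathcal{H}_{2n}$ to the open locus over $\mathcal{O}^{\mathbf{u}}$, where the symplectic form, both multiplications, and algebraicity all restrict on the nose. Your explicit check in the slim presentation that the ``all corners in $\mathcal{O}^{\mathbf{u}}$'' condition is preserved by both the vertical and horizontal products is exactly the routine bookkeeping the paper leaves implicit.
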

\subsection{Restriction of the Morita equivalence to $\mathcal{H}_{2n}$} We have the following restricted version of Theorem \ref{thm:symdoumor}
. As demonstrated in the proof of Proposition \ref{pro:slim}, there is a natural inclusion $\mathcal{G}_{m+n}\subset \widetilde{F}_{m+n} \times \widetilde{F}_{2n}^* \times \widetilde{F}_{2m}^* \times \widetilde{F}_{m+n}$, so we may define $\mathcal{H}_{m+n} $ as the intersection 
\[ \mathcal{H}_{n+m}= \mathcal{G}_{m+n}\cap\left( \Gamma_{m+n} \times \Gamma_{2n}^* \times \Gamma_{2m}^* \times \Gamma_{m+n}\right),\]
and we have that it is a symplectic subgroupoid of $\mathcal{G}_{n+m} $ over $\Gamma_{n+m}$ that serves as a symplectic bimodule between $\mathcal{H}_{2m} $ and $\mathcal{H}_{2n} $.
\begin{thm}\label{thm:resmorequ} The Morita equivalence of Theorem \ref{thm:symdoumor} restricts to a symplectic Morita equivalence between $\mathcal{H}_{2m}$  and $\mathcal{H}_{2n}$. \end{thm}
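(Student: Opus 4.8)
The plan is to verify the definition of symplectic Morita equivalence directly for the triple $(\mathcal{H}_{2n},\mathcal{H}_{2m},\mathcal{H}_{n+m})$, obtaining all of the structure by restricting the Morita data produced in Theorem \ref{thm:symdoumor}. Recall that $\mathcal{H}_{2n}$, $\mathcal{H}_{2m}$ and $\mathcal{H}_{n+m}$ are the slim symplectic subgroupoids cut out of $\mathcal{G}_{2n}$, $\mathcal{G}_{2m}$ and $\mathcal{G}_{n+m}$ by the decoration conditions that define $\Gamma$ and $\Gamma^*$, so that under the slim embeddings they sit inside products of the corresponding $\Gamma$'s and $\Gamma^*$'s (Proposition \ref{pro:quopoisub}, together with the definition of $\mathcal{H}_{n+m}$ preceding the statement). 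Since the excerpt already records that $\mathcal{H}_{n+m}$ is a symplectic subgroupoid of $\mathcal{G}_{n+m}$ over $\Gamma_{n+m}$, the verification splits into three checks: that the commuting symplectic actions of $\mathcal{G}_{2n}$ and $\mathcal{G}_{2m}$ on $\mathcal{G}_{n+m}$ carry the $\mathcal{H}$'s into $\mathcal{H}_{n+m}$; that the restricted actions remain symplectic; and that the restricted moment maps $p|_{\mathcal{H}_{n+m}}\colon \mathcal{H}_{n+m}\to \Gamma_{2m}^*$ and $q|_{\mathcal{H}_{n+m}}\colon \mathcal{H}_{n+m}\to \Gamma_{2n}^*$ are surjective submersions inducing the expected identifications of orbit spaces.

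First I would show that the actions restrict. The restricted Morita equivalence is realized by the very same gluing of surfaces used in the proof of Theorem \ref{thm:symdoumor}, now carrying the $B_-$-decoration on the distinguished edges that defines $\Gamma$ and $\Gamma^*$; gluing two representations whose values satisfy these decorations again satisfies them, so the action maps send $\mathcal{H}_{2n}\times \mathcal{H}_{n+m}$ into $\mathcal{H}_{n+m}$ and symmetrically for the left action. Equivalently, in the slim coordinates the relevant composition formulas \eqref{eq:luweimul} only modify the $\Gamma_{2n}^*$- or $\Gamma_{2m}^*$-factor by composition with an element already lying in that wide subgroupoid, so closure is immediate; this is the same mechanism, applied to Lie--Dirac subgroupoids rather than to orbits, by which the actions restrict in Proposition \ref{pro:symorbact}. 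Commutativity of the two restricted actions is inherited from the unrestricted ones. Symplecticity then follows exactly as in Theorem \ref{thm:symdoumor}, from the gluing formula \eqref{eq:equ2for} of Theorem \ref{thm:morequ} applied to the $\mathcal{H}$-decorated surfaces, together with the fact that each $\mathcal{H}$ carries the restriction of the ambient symplectic form and is a symplectic subgroupoid.

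The main obstacle is principality of the restricted actions, which reduces to surjectivity of $p|_{\mathcal{H}_{n+m}}$ and $q|_{\mathcal{H}_{n+m}}$; freeness and properness are inherited from the unrestricted principal actions, being restrictions to closed invariant subspaces along closed subgroupoids. For surjectivity I would rerun Step~3 of the proof of Theorem \ref{thm:symdoumor} within the tighter $\Gamma$-decoration: given $\xi\in \Gamma_{2n}^*$, I must extend the associated decorated representation of $\widehat{\Sigma_{0,n}}$ to a representation of $\widehat{\Sigma_{n+m}}$ obeying the $\mathcal{H}_{n+m}$-constraints, that is, with the distinguished edge now decorated by $B_-$ rather than by $G$. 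This amounts to solving the analogue of the equation $h^{-1}(y^{-1}x)h=bg^{-1}$ of that proof, but with $g$ constrained by the $B_-$-decoration of the cutting edge and with $x,y$ subject to the $\Gamma^*$-conditions. The transversality of the characteristic foliations of $\Gamma_{2n}$ and $\Gamma_{2n}^*$ observed in the proof of Theorem \ref{thm:inttotcon} guarantees that the dimensions match, and the big Gauss cell $N_-TN_+$ covering argument, applied to the constrained product $y^{-1}x$, supplies the required solution; this is the step I expect to demand the most care.

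Finally, surjectivity of the restricted moment maps, combined with the unrestricted orbit-space identifications $\mathcal{G}_{n+m}/\mathcal{G}_{2n}\cong \widetilde{F}_{2m}^*$ and $\mathcal{G}_{n+m}/\mathcal{G}_{2m}\cong \widetilde{F}_{2n}^*$, yields the restricted identifications $\mathcal{H}_{n+m}/\mathcal{H}_{2n}\cong \Gamma_{2m}^*$ and $\mathcal{H}_{n+m}/\mathcal{H}_{2m}\cong \Gamma_{2n}^*$, since each orbit of a restricted action is the intersection of an ambient orbit with $\mathcal{H}_{n+m}$. This exhibits $\mathcal{H}_{n+m}$ as a symplectic Morita bimodule between $\mathcal{H}_{2m}$ and $\mathcal{H}_{2n}$ and completes the proof.
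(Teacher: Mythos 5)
Your skeleton (restrict the actions via the slim description of Proposition \ref{pro:slim}, check symplecticity of the actions, then rerun Step 3 of Theorem \ref{thm:symdoumor} for surjectivity) parallels the paper's proof, and your first and last steps are essentially what the paper does. But there is a genuine gap at the central point: you never prove that the restriction of the symplectic form of $\mathcal{G}_{n+m}$ to $\mathcal{H}_{n+m}$ is nondegenerate. You take this from the sentence preceding the theorem (``the excerpt already records that $\mathcal{H}_{n+m}$ is a symplectic subgroupoid''), but that sentence is a forward announcement whose only justification is the proof of this very theorem, so your appeal to it is circular. Note also that the claim is not automatic: a subgroupoid of a symplectic groupoid cut out by decoration conditions is in general only coisotropic (this is exactly the over-symplectic phenomenon discussed before Theorem \ref{thm:inttotcon}), and $\mathcal{H}_{n+m}$ with $n\neq m$ is not covered by Proposition \ref{pro:quopoisub} or Theorem \ref{thm:inttotcon}, which apply only to the double groupoids $\mathcal{G}_{2n}$. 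The paper's fix is a linking-groupoid argument: the two actions assemble $\mathcal{H}_{2n}\sqcup \mathcal{H}_{n+m}\sqcup \mathcal{H}_{2m}$ into a single Lie groupoid over $\widetilde{F}^*_{2n}\sqcup\widetilde{F}^*_{2m}$; the inherited 2-form on this groupoid is multiplicative and is nondegenerate along the units by the pairing computation of Lemma \ref{lem:ann}, hence nondegenerate globally, in particular on $\mathcal{H}_{n+m}$. Without an argument of this kind your second check (symplecticity of the actions via \eqref{eq:equ2for}) has nothing to stand on, since the graphs cannot be lagrangian inside products of spaces that are not yet known to be symplectic.

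A secondary inaccuracy: in your surjectivity step the covering fact you invoke is the wrong one. Once the $G^*$-data is constrained by the $\Gamma^*$-conditions (i.e.\ to the wide subgroupoid with $B_+$ replacing $G^*$), the product $y^{-1}x$ no longer sweeps the big Gauss cell $N_-TN_+$; it is confined to a Borel subgroup. The correct replacement, and what the paper uses, is that every element of $G$ lies in a Borel subgroup and all Borel subgroups are conjugate, so conjugates of $B_+$ still cover $G$ and the extension equation $h^{-1}(y^{-1}x)h=bg^{-1}$ remains solvable under the constraints. Your transversality remark gives a dimension count but not the existence of a solution. These two repairs --- the linking-groupoid nondegeneracy argument and the Borel-conjugacy covering fact --- are exactly what separates your outline from a complete proof.
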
      
\begin{proof} By the description of the groupoid structure on $\mathcal{G}_{n+m} $ given in the proof of Proposition \ref{pro:slim}, it follows that $\mathcal{H}_{n+m}$ is preserved by the actions of $\mathcal{H}_{2n},\mathcal{H}_{2m} $. To show that the symplectic form on $\mathcal{G}_{n+m} $ restricts to a symplectic form on $\mathcal{H}_{n+m} $ we proceed as follows. We have that the Lie groupoid structures together with the actions determine a groupoid structure
\[ \mathcal{H}_{2n}\sqcup \mathcal{H}_{n+m} \sqcup \mathcal{H}_{2m} \rightrightarrows \widetilde{F}^*_{2n} \sqcup \widetilde{F}^*_{2m},  \]
as in \cite[Prop. 6.4]{baigua}. Since the 2-form inherited by such a groupoid is multiplicative and nondegenerate along the unit inclusion according to Lemma \ref{lem:ann}, it is nondegenerate globally, in particular, nondegenerate on $\mathcal{H}_{n+m}$.
 
To show that the restricted moment maps $\mathcal{H}_{n+m} \rightarrow \Gamma^*_{2n},\Gamma^*_{2m}$ are still surjective, we proceed as in Step 3 of the proof of Theorem \ref{thm:symdoumor} and we use the fact that every element in $G$ lies in a Borel subgroup and all Borel subgroups are conjugate. Principality follows from using the expression of the double groupoid actions in terms the actions of $\Gamma_{2m}$ and $\Gamma_{2n} $ on $\Gamma_{m+n}$, which are principal. Similarly to the proof of Theorem \ref{thm:symdoumor}, the identification
\[\mathcal{H}_{2m} \times_{\Gamma_{2m}^*} \mathcal{H}_{m+n} \rightarrow \mathcal{H}_{m+n} \times_{\Gamma_{2n}^*} \mathcal{H}_{m+n}, \qquad (h,z)\mapsto (h\cdot z,z);  \]
may be expressed in terms of the corresponding identification for the $\Gamma_{2m}$-action on $\Gamma_{m+n}$. The case of $\mathcal{H}_{2n}$ is treated analogously.  
 \end{proof} 
\subsection{Toric actions on $\mathcal{G}_{2n}$ from quasi-hamiltonian actions on fission spaces} There are canonical $T=B_+\cap B_-$-actions on $\widetilde{F}_m$ and $F_m$ given by
\begin{align} t\cdot [g_1,\dots,g_m]=[tg_1,g_2,\dots,g_m] ; \label{eq:tact} \end{align} 
for all $t\in T$ and all $[g_i]\in \widetilde{F}_{m}$ or $[g_i]\in F_m$. The $T$-orbit of a symplectic leaf inside each of these spaces is called a {\em $T$-leaf} in \cite{conpoigro} and we have the decomposition of $\Gamma_{2n}$ into a finite number of $T$-leaves. In particular, it is proven in \cite{conpoigro} that each special configuration Poisson groupoid of flags $\Gamma^{(\mathbf{u},\mathbf{u}^{-1})  } \rightrightarrows \mathcal{O}^{\mathbf{u} }$ is a $T$-leaf in $\Gamma_{2n}$. The $T$-leaves in $\Gamma_{2n}$ are isomorphic to generalized double Bruhat cells and hence, if $G$ is simply-connected or of adjoint type, they are isomorphic to decorated double Bott-Samelson cells, which are cluster varieties \cite{cludoubot}. Due to these remarkable facts, it is desirable to lift the $T$-actions of \eqref{eq:tact} to the symplectic double groupoids $\mathcal{G}_{2n}$ while providing a purely moduli-theoretic interpretation of them. In fact, the $T$-action lifted to $\mathcal{G}_{2n}$ may be viewed as coming from the theory of wild character varieties \cite{geobrasto,quahammer,fisspa}.

This interpretation follows in a rather straightforward fashion from results in the literature, so we shall be brief. The {\em fission space} ${}_G \mathcal{A}_T=G \times G^*\subset G \times(B_+ \times B_-)$ is a quasi-hamiltonian $G \times T$-space with moment map and action as follows:
\[ \mu_{{}_G \mathcal{A}_T}(c,x,y)=(c^{-1}y^{-1}xc,t^{-1}),\qquad (g,s)\cdot (c,x,y)=(scg^{-1},sxs^{-1},sys^{-1}), \qquad t=\text{pr}_T(x);  \]
for all $(c,x,y)\in {}_G \mathcal{A}_T$ and $(g,s)\in G \times T$, the quasi-symplectic 2-form is as in \cite[Eq. (8)]{quahammer} (the corresponding space $\widetilde{\mathcal{C} }$ therein differs by a covering map from this one):
\[ \omega_{{}_G \mathcal{A}_T}|_{(c,x,y)}=\frac{1}{2}\left[ \langle (yc)^*\theta^r,(xc)^*\theta^r \rangle+\langle (yc)^*\theta^l,c^*\theta^l \rangle- \langle (xc)^*\theta^l,c^*\theta^l \rangle \right]. \]
The quasi-hamiltonian space ${}_G \mathcal{A}_T$ models the generalized monodromy data of meromorphic connections on a disc with a single pole of order two at the origin (with a regular semisimple irregular type). We may interpret ${{}_G \mathcal{A}_T}$ as the decorated moduli space of the cylinder with two marked points on  one component and one on the other, with the component labelled with two points being decorated with $G^*$ and the other one with $G$, see Fig.~\ref{fig:fusdou}. 

Recall that, given two quasi-hamiltonian $G$-spaces $M_1$, $M_2$, we can take their fusion product $M_1\ast M_2$ and then reduce at level $1\in G$:
\[ \mu_i:M_i \rightarrow G, \qquad \mu_1\mu_2: M_1\ast M_2=M_1 \times M_2 \rightarrow G, \qquad [M_1\ast M_2]\sslash G:=(\mu_1\mu_2)^{-1}(1)/G. \]
If $M_1$ and $M_2$ are representation varieties, this last operation corresponds to taking the representation variety of the surface obtained by gluing the original surfaces along a boundary component. This is one formulation of the principle according to which {\em gluing equals reduction} \cite{alemeimal}.
 
Now consider the marked surface $(\widehat{\Sigma_{2n}},\widehat{V_{2n}}')$, where $\widehat{\Sigma}_{2n}$ is as in \S\ref{sec:morequ} but $\widehat{V_{2n}}'$ is given by removing two marked points from $\widehat{V_{2n}}$. Then by decorating all the boundary components of $(\widehat{\Sigma_{2n}},\widehat{V_{2n}}')$ that have two marked points on them with the group $\widetilde{B}_+$ just as in \S\ref{sec:morequ}, we obtain a submanifold
\[ \mathcal{M}_{2n}'\hookrightarrow \hom(\Pi_1(\widehat{\Sigma_{2n}},\widehat{V_{2n}}'),G) \] 
that determines a quasi-hamiltonian $G^2$-space $\mathcal{M}_{2n}=\mathcal{M}'_{2n}/B_+^{2n-1}$ with the restricted action and 2-form inherited from $\hom(\Pi_1(\widehat{\Sigma_{2n}},\widehat{V_{2n}}'),G)$. Here the $B_+^{2n-1}$-action is the one induced by the decoration. In other words, we are performing partial reduction with respect to the decoration of all but two of the boundary components of $(\widehat{\Sigma_{2n}},\widehat{V_{2n}}')$. 
\begin{lem}\label{lem:grofus} We have that $\mathcal{G}_{2n}$ is symplectomorphic to the reduced fusion product $[({}_G \mathcal{A}_T\times  \overline{{}_G \mathcal{A}_T} )\ast \mathcal{M}_{2n} ]\sslash G^2 $, where the fusion is with respect to the $G^2$-actions and $\overline{{}_G \mathcal{A}_T} $ denotes the opposite quasi-hamiltonian $G \times T$-space.   \end{lem}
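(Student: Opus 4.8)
The plan is to deduce the symplectomorphism from the principle that \emph{gluing equals reduction} \cite{alemeimal,quisur,quisur2}, applied to a decomposition of the double surface $(\widehat{\Sigma_{2n}},\widehat{V_{2n}})$ underlying $\mathcal{G}_{2n}$. The first step is to exhibit $(\widehat{\Sigma_{2n}},\widehat{V_{2n}})$, with its decoration $(\mathcal{L}_{2n},\widehat{\mathcal{A}_{2n}})$, as the surface obtained by gluing the middle surface underlying $\mathcal{M}_{2n}$ to two cylinders along two disjoint circles. Concretely, the two boundary components of $\widehat{\Sigma_{2n}}$ that carry the $G^*$-type decorations---those associated with the vertices $v_1$ and $v_{2n+1}$, decorated by $\mathfrak{g}^*$ and $(\mathfrak{g}^*)^\vee$ at their respective pairs of marked points---are each split off by a cut circle, producing two cylinders $C_1$ and $C_2$. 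Removing one marked point from each of these two components is precisely the passage from $\widehat{V_{2n}}$ to $\widehat{V_{2n}}'$, so that the surface underlying $\mathcal{M}_{2n}$ is the complement of $C_1\sqcup C_2$: its $2n-1$ components with two marked points carry the decoration $\widetilde{B}_+$, while the two cut circles become single-marked-point boundaries carrying the two residual $G$-moment maps that make $\mathcal{M}_{2n}$ a quasi-hamiltonian $G^2$-space.

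Next I would identify the decorated moduli space of each cylinder with a fission space. By the interpretation recalled before the statement (Fig.~\ref{fig:fusdou}), the cylinder with two marked points decorated by $G^*$ on one boundary and one marked point decorated by $G$ on the other has decorated moduli space ${}_G\mathcal{A}_T$, together with the stated quasi-symplectic form $\omega_{{}_G\mathcal{A}_T}$ and $G\times T$-moment map. The orientation reversal built into the doubling (the copy $i_2(\Sigma)$ carries the reversed orientation) interchanges the two $G$-factors in the $G^*$-type decoration---this is exactly the difference between the decorations $(\mathfrak{g}^*,(\mathfrak{g}^*)^\vee)$ on the component $v_1$ and $((\mathfrak{g}^*)^\vee,\mathfrak{g}^*)$ on the component $v_{2n+1}$---so that $C_2$ yields $\overline{{}_G\mathcal{A}_T}$ rather than ${}_G\mathcal{A}_T$. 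This accounts for the opposite quasi-hamiltonian structure in the statement.

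With the decomposition in place, I would invoke gluing equals reduction: gluing $C_1$ and $C_2$ to $\mathcal{M}_{2n}$ along the two single-marked-point boundaries corresponds to forming the fusion product $({}_G\mathcal{A}_T\times\overline{{}_G\mathcal{A}_T})\ast\mathcal{M}_{2n}$ with respect to the two $G$-factors and then reducing at $1\in G^2$. Since $\mathcal{M}_{2n}=\mathcal{M}'_{2n}/B_+^{2n-1}$ is itself the partial reduction along the $\widetilde{B}_+$-decorated components, and this reduction involves vertices disjoint from those adjacent to the two glued circles, it commutes with the $G^2$-reduction; hence the reduced fusion product is the fully reduced decorated moduli space $\mathfrak{M}_G(\widehat{\Sigma_{2n}},\widehat{V_{2n}})_{\mathcal{L}_{2n},\widehat{\mathcal{A}_{2n}}}=\mathcal{G}_{2n}$. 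Finally I would check that the 2-forms agree: the quasi-symplectic form assembled on the glued surface from $\omega_{{}_G\mathcal{A}_T}$, its opposite, and the 2-form of $\mathcal{M}_{2n}$ coincides, after reduction, with the form on $\mathcal{G}_{2n}$ described in \S\ref{subsec:mom map surf}, because both are built by the same edge-by-edge prescription on a common triangulation.

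The main obstacle I anticipate is the precise bookkeeping at the two glued components: confirming that the $G^*$-type decoration on a cut-off cylinder reproduces \emph{exactly} ${}_G\mathcal{A}_T$---including the moment map $\mu_{{}_G\mathcal{A}_T}$ and, crucially, the residual $T$-factor---and checking that after the $G^2$-reduction the surviving $T$-directions coming from the fission factors are carried isomorphically onto the structure on $\mathcal{G}_{2n}$ (these will furnish the lifted torus action of the subsequent theorem). Matching the orientation conventions that distinguish ${}_G\mathcal{A}_T$ from $\overline{{}_G\mathcal{A}_T}$ and verifying the independence of the partial $B_+^{2n-1}$-reduction from the $G^2$-gluing are the remaining delicate points; once these are settled, the symplectomorphism is a direct consequence of the gluing--reduction formalism.
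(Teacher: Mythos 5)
Your proposal is correct and takes essentially the same approach as the paper: the paper likewise interprets ${}_G \mathcal{A}_T$ and $\overline{{}_G \mathcal{A}_T}$ as decorated moduli spaces of cylinders, identifies $({}_G \mathcal{A}_T\times \overline{{}_G \mathcal{A}_T})\ast \mathcal{M}_{2n}$ with the decorated moduli space of a sphere with $2n+3$ holes, and concludes by the associativity and commutativity of fusion and reduction (gluing equals reduction), with 2-form compatibility handled as in \cite[Prop. 7]{quahammer}. The only difference is presentational—you cut $\widehat{\Sigma_{2n}}$ into two cylinders plus a middle piece, while the paper assembles the same surface by gluing—and your remaining checks (opposite decoration giving $\overline{{}_G \mathcal{A}_T}$, commutation of the partial $B_+^{2n-1}$-reduction with the $G^2$-reduction) are exactly the points the paper disposes of by the same observations.
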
 
\begin{proof} This follows immediately from the fact that one can sew and then decorate different boundary arcs in any order, in the language of \cite{quisur2}. As we discussed, ${{}_G \mathcal{A}_T}$ may be viewed as the moduli space of a decorated cylinder, hence ${}_G \mathcal{A}_T\times  \overline{{}_G \mathcal{A}_T} $ corresponds to a disjoint union of two such cylinders and $({}_G \mathcal{A}_T\times  \overline{{}_G \mathcal{A}_T} )\ast \mathcal{M}_{2n}  $ is the decorated moduli space of a sphere with $2n+3$ holes, this identification is compatible with the 2-forms, just as in \cite[Prop. 7]{quahammer}. Reduction by $G^2$ removes the two additional unmarked boundary components, see Fig.~\ref{fig:fusdou}. \end{proof} 
\begin{figure}[H] 
\begin{tikzpicture}[line width=0.6pt]

\begin{scope}[shift={(0,0)}]
    \node at (0.3, 1.6) {${{}_G \mathcal{A}_T}$}; 
    \draw (0,0.8) ellipse (0.15 and 0.3);
    \draw (0.6,0.8) ellipse (0.15 and 0.3);
    \draw (0,1.1) -- (0.6,1.1) (0,0.5) -- (0.6,0.5);
    
    \draw (0,-0.8) ellipse (0.15 and 0.3);
    \draw (0.6,-0.8) ellipse (0.15 and 0.3);
    \draw (0,-0.5) -- (0.6,-0.5) (0,-1.1) -- (0.6,-1.1);
    
    \node at (0.3, -1.6) {$\overline{{}_G \mathcal{A}_T}$};
\end{scope}

\begin{scope}[shift={(2.5,0)}]
    \node at (1.5, 1.8) {$\mathcal{M}_{4}$}; 

    \draw (0,0.8) ellipse (0.15 and 0.3);
    \draw (0,-0.8) ellipse (0.15 and 0.3);
    
    \draw[dashed, gray!60] (1.5,0) ellipse (0.15 and 0.3);

    \draw (0,1.1) to[out=0,in=120] (1.5,0.3);
    \draw (0,-1.1) to[out=0,in=-120] (1.5,-0.3);
    \draw (0,0.5) to[out=0,in=0,looseness=1.5] (0,-0.5);

    \begin{scope}[shift={(1.5,0)}]
        \draw (1.5,0) ellipse (0.15 and 0.3) node[right=6pt] {$\widetilde{B}_+ $};
        \draw (0.75,0.75) ellipse (0.3 and 0.15) node[above=4pt] {$\widetilde{B}_+ $};
        \draw (0.75,-0.75) ellipse (0.3 and 0.15) node[below=6pt] {$\widetilde{B}_+ $};
        
        \draw (0,0.3) to[out=45,in=180] (0.45,0.75);
        \draw (1.05,0.75) to[out=0,in=135] (1.5,0.3);
        \draw (1.5,-0.3) to[out=-135,in=0] (1.05,-0.75);
        \draw (0.45,-0.75) to[out=180,in=-45] (0,-0.3);
    \end{scope}
\end{scope}

\begin{scope}[shift={(8.0,0)}]
    \node at (1.15, 1.8) {$\mathcal{G}_{4} $}; 
    \draw (0,0.8) ellipse (0.15 and 0.3) node[left=6pt] {$G^*$};
    \draw (0,-0.8) ellipse (0.15 and 0.3) node[left=6pt] {$G^*$};
    \draw (1.2,1.2) ellipse (0.15 and 0.3) node[below=6pt] {$\widetilde{B}_+ $};
    \draw (1.2,-1.2) ellipse (0.15 and 0.3) node[above=6pt] {$\widetilde{B}_+ $};
    \draw (2.3,0) ellipse (0.15 and 0.3) node[right=6pt] {$\widetilde{B}_+ $};
    
    \draw (0,1.1) to[out=10,in=190] (1.2,1.5);
    \draw (1.2,1.5) to[out=0,in=120] (2.3,0.3);
    \draw (2.3,-0.3) to[out=-120,in=0] (1.2,-1.5);
    \draw (1.2,-1.5) to[out=170,in=-10] (0,-1.1);
    
    \draw (0,0.5) to[out=0,in=0,looseness=1.8] (0,-0.5); 
\end{scope}

\end{tikzpicture}
\caption{Gluing both cylinders with the unmarked boundary circles gives rise to the surface on the right which underlies $\mathcal{G}_{4}\cong [({}_G \mathcal{A}_T\times  \overline{{}_G \mathcal{A}_T} )\ast \mathcal{M}_{4} ]\sslash G^2 $}
    \label{fig:fusdou}
\end{figure}  
\begin{thm}\label{thm:lifact} The $T^2$-action on $\mathcal{G}_{2n}$ given by the identification of Lemma \ref{lem:grofus} is an action by automorphisms with respect to the vertical groupoid structure $\mathcal{G}_{2n} \rightrightarrows \widetilde{F}_{2n}$ and it is a multiplicative action with respect to the horizontal groupoid structure $\mathcal{G}_{2n} \rightrightarrows \widetilde{F}_{2n}^*$. Moreover, such an action lifts the $T^2$-action on $\widetilde{F}_{2n}$ given by  
\begin{align}  (t_1,t_2)\cdot [g_1,\dots,g_{2n}]=[t_1g_1,g_2,\dots,g_{2n}t_2^{-1}] \label{eq:t2act} \end{align}  
and it has a $T^2$-valued moment map that makes $\mathcal{G}_{2n}$ into a quasi-hamiltonian $T^2$-space.
\end{thm}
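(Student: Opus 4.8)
The plan is to extract both the $T^2$-action and all four assertions from the presentation $\mathcal{G}_{2n}\cong[({}_G\mathcal{A}_T\times\overline{{}_G\mathcal{A}_T})\ast\mathcal{M}_{2n}]\sslash G^2$ of Lemma~\ref{lem:grofus}, keeping careful track of where the two copies of the fission space ${}_G\mathcal{A}_T=G\times G^*$ sit inside the doubled surface. Since ${}_G\mathcal{A}_T$ is quasi-hamiltonian for $G\times T$, the product $({}_G\mathcal{A}_T\times\overline{{}_G\mathcal{A}_T})$ is quasi-hamiltonian for $G^2\times T^2$; fusing with the $G^2$-space $\mathcal{M}_{2n}$ leaves the two $T$-factors untouched, and reducing by $G^2$ at the identity yields, by the functoriality of fusion and reduction \cite{alemeimal}, a residual quasi-hamiltonian $T^2$-space. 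This residual action is by definition the action of the theorem, and its residual moment map---the product of the two $\mathrm{pr}_T$-components of the moment maps of the ${}_G\mathcal{A}_T$-factors---exhibits $\mathcal{G}_{2n}$ as a quasi-hamiltonian $T^2$-space, which is the last assertion.

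To identify the induced action I would follow the decorating $T$-action of each ${}_G\mathcal{A}_T$-factor through the gluing of Figure~\ref{fig:fusdou}. The $T$-action is supported at the $G^*$-decorated end of each cylinder, and under the gluing these two ends become precisely the two boundary circles of $\widehat{\Sigma_{2n}}$ carrying the decorations $\mathfrak{g}^*$ and $(\mathfrak{g}^*)^\vee$, namely those descending from the vertices $v_1$ and $v_{2n+1}$ of the disc $(\Sigma_{2n},V_{2n})$. A direct computation with the Poisson isomorphism $\Psi_{2n}$ of Theorem~\ref{pro:conflaqpoi} then shows that the decorating action of $t_1\in T$ at $v_1$ multiplies the first boundary edge on the left, sending $[g_1,\dots,g_{2n}]\mapsto[t_1g_1,g_2,\dots,g_{2n}]$, while that of $t_2\in T$ at $v_{2n+1}$ multiplies the last edge on the right, sending $g_{2n}\mapsto g_{2n}t_2^{-1}$. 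Together these reproduce the $T^2$-action \eqref{eq:t2act}, so the $T^2$-action on $\mathcal{G}_{2n}$ covers it as claimed.

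For the compatibility with the two groupoid structures I would use that both arise from the two commuting reflections of $\widehat{\Sigma_{2n}}$ described in the proof of Proposition~\ref{pro:symdou}. The support of the $T^2$-action is the pair of $G^*$-circles, and this support is invariant under the reflection defining the vertical structure (which merely interchanges the two marked points of each circle, compatibly with the symmetric action $(c,x,y)\mapsto(tc,txt^{-1},tyt^{-1})$); hence the action commutes with $\mathtt{s}^V,\mathtt{t}^V$ and with the vertical multiplication \eqref{eq:luweimul}, and since it covers the automorphism \eqref{eq:t2act} of $\widetilde{F}_{2n}$ it is an action by automorphisms of $\mathcal{G}_{2n}\rightrightarrows\widetilde{F}_{2n}$. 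For the horizontal structure one instead checks that the residual $T^2$-valued moment map is a morphism of groupoids $\mathcal{G}_{2n}\to T^2$ for the horizontal composition, which is exactly the multiplicativity condition of \cite{intgk}. This last verification is the main obstacle: one must show that gluing two horizontally composable flat bundles along the seam of $\widetilde{F}_{2n}^*$ multiplies the formal-monodromy data $\mathrm{pr}_T$ at the two ends in the correct way, and here the explicit description of $\widetilde{F}_{2n}^*$ in Proposition~\ref{pro:expduapoi}, together with the quasi-hamiltonian (rather than hamiltonian) nature of the $T^2$-moment map, must be used in an essential way.
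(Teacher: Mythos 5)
Your first and second steps do match the paper: the quasi-hamiltonian $T^2$-structure and its moment map are obtained exactly as you say, from Lemma \ref{lem:grofus} and the standard functoriality of fusion and reduction, and the identification of the induced action on the base with \eqref{eq:t2act} is indeed a computation at the two $G^*$-decorated boundary circles (the paper does it by choosing a skeleton of $(\Sigma_{2n},V_{2n})$ and extending it to generators of the fundamental groupoid of $(\widehat{\Sigma_{2n}},\widehat{V_{2n}})$, which is essentially your computation with $\Psi_{2n}$).

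The gap is in the multiplicativity claim for the horizontal structure. Multiplicativity of the \emph{action} means that the action map is a groupoid morphism $(T^2 \rightrightarrows T) \times (\mathcal{G}_{2n} \rightrightarrows \widetilde{F}^*_{2n}) \rightarrow (\mathcal{G}_{2n} \rightrightarrows \widetilde{F}^*_{2n})$, with $T^2 \rightrightarrows T$ the pair groupoid; concretely, $(t_1,t_3)\cdot(g\, g') = \bigl((t_1,t_2)\cdot g\bigr)\bigl((t_2,t_3)\cdot g'\bigr)$ for horizontally composable $g,g'$. This is \emph{not} the same condition as the $T^2$-valued \emph{moment map} being a groupoid morphism $\mathcal{G}_{2n}\rightarrow T^2$, which is what you propose to verify instead; and you yourself flag that verification as ``the main obstacle'' without carrying it out, so even on your own terms the argument is incomplete. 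The paper removes this obstacle by construction rather than by verification: using Proposition \ref{pro:expduapoi} it writes down an explicit $T$-action by groupoid automorphisms on $\widetilde{F}^*_{2n}\cong (G^*\times N_+^{n-1})\times_{B_+^n}G^n$, namely $t\cdot[(x,y),(n_1,\dots,n_{n-1}),(g_1,\dots,g_n)]=[(txt^{-1},tyt^{-1}),(n_1,\dots,n_{n-1}),(tg_1,g_2,\dots,g_n)]$, and then \emph{defines} the $T^2$-action on $\mathcal{G}_{2n}$ by acting factor-wise through the slim embedding $\mathcal{G}_{2n}\subset \widetilde{F}_{2n}\times\widetilde{F}^*_{2n}\times\widetilde{F}^*_{2n}\times\widetilde{F}_{2n}$ of Proposition \ref{pro:slim}. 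Since the horizontal and vertical multiplications of a slim double groupoid are componentwise, as in \eqref{eq:luweimul}, multiplicativity and the vertical automorphism property are then automatic, and the only remaining check is that this action coincides with the residual quasi-hamiltonian action of Lemma \ref{lem:grofus} and lifts \eqref{eq:t2act} --- again a skeleton computation. Your reflection-symmetry argument for the vertical structure is plausible in spirit but would need the same explicit identification to become a proof; as written, the horizontal multiplicativity is asserted, mischaracterized, and not established.
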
 
\begin{proof} To clarify our terminology, the $T^2$-action is multiplicative in the sense that: (1) it is a groupoid morphism from the direct product groupoid
\[ (T^2 \rightrightarrows T) \times (\mathcal{G}_{2n} \rightrightarrows \widetilde{F}^*_{2n}  ) \rightarrow (\mathcal{G}_{2n} \rightrightarrows \widetilde{F}^*_{2n}  ); \]
where $T^2 \rightrightarrows T$ is regarded as a pair groupoid; (2) moreover, such an action is an action by groupoid automorphisms with respect to the vertical groupoid structure $\mathcal{G}_{2n} \rightrightarrows \widetilde{F}_{2n}$.

The existence and quasi-hamiltonian nature of the moment map $\mathcal{G}_{2n} \rightarrow T^2$ follows from the identification $\mathcal{G}_{2n}\cong [({}_G \mathcal{A}_T\times  \overline{{}_G \mathcal{A}_T} )\ast \mathcal{M}_{2n} ]\sslash G^2 $ of Lemma \ref{lem:grofus}. In fact, such a reduced fusion product inherits a residual $T^2$-valued moment map from ${{}_G \mathcal{A}_T}\times  \overline{{}_G \mathcal{A}_T}$, by the standard properties of the fusion product. To show that the corresponding  $T^2$-action lifts \eqref{eq:t2act} and is multiplicative, we proceed as follows. We identify $\widetilde{F}^*_{2n} \rightrightarrows F_n$ with the groupoid $(G^* \times N_+^{n-1}) \times_{B_+^{n}} G^n     \rightrightarrows F_n$, see Proposition \ref{pro:expduapoi}. Then there is a $T$-action on $(G^* \times N_+^{n-1}) \times_{B_+^{n}} G^n     \rightrightarrows F_n$ by automorphisms given by
\[ t\cdot [(x,y),(n_1,\dots, n_{n-1}),(g_1,g_2,\dots,g_n)]=[(txt^{-1},tyt^{-1}),(n_1,\dots,n_{n-1}),(tg_1,g_2,\dots,g_n)]; \] 
for all $(t, [(x,y),(n_1,\dots, n_{n-1}),(g_1,g_2,\dots,g_n)])\in T \times \widetilde{F}_{2n} $. Then the desired $T^2$-action on $\mathcal{G}_{2n}$ is obtained by viewing it as a slim double groupoid $\mathcal{G}_{2n}\subset \widetilde{F}_{2n} \times \widetilde{F}_{2n}^* \times \widetilde{F}_{2n}^* \times \widetilde{F}_{2n} $, see Proposition \ref{pro:slim}. In fact, we may define a $T$-action as above on each of the factors $\widetilde{F}^*_{2n}$ as above and restrict the action to  $\mathcal{G}_{2n}$. Such an action is automatically multiplicative. Choosing a skeleton for $(\Sigma_{2n},V_{2n})$ and extending it to a set of generators for the fundamental groupoid of $(\widehat{\Sigma_{2n}},\widehat{V_{2n}})$, we may check that the $T^2$-action thus defined lifts \eqref{eq:t2act} as desired and is the quasi-hamiltonian $T^2$-action inherited from ${{}_G \mathcal{A}_T}\times  \overline{{}_G \mathcal{A}_T}$. 
\end{proof}
\begin{rema} The double Bott-Samelson cells are obtained as quotients of the decorated double Bott-Samelson cells by the $T^2$-action \eqref{eq:t2act} (and the latter are isomorphic as varieties to the $T$-leaves of $\Gamma_{2n}$ for $G$ simply-connected or an adjoint form, as mentioned before) \cite{cludoubot}. So Theorem \ref{thm:lifact} should lead to integrating them through a quasi-Hamiltonian reduction of $\mathcal{G}_{2n}$ but to do that we need a more detailed comparison of the cluster Poisson structure used in \cite{cludoubot} with the one used here. \end{rema}   
 
\appendix
\section{Symplectic forms and groupoid structures on decorated moduli spaces}
\subsection{The symplectic form on a decorated moduli space}\label{subsec:mom map surf} Now we explain how to obtain explicit symplectic forms on the moduli spaces we use in the text. If $\Sigma$ is a closed surface, the construction described below may be adapted to recover the Atiyah-Bott symplectic structure, see \cite[\S 5]{weisymmod}. 

If $(\Sigma,V)$ is a marked surface in which $V$ meets every component of $\partial \Sigma$, we say that $(\Sigma,V)$ is a {\em fully marked surface}. Suppose for simplicity that the boundary decoration $(H,\mathcal{A})$ is given by the $\A$-orbit $H$ of the unit $1\in G^E$ and suppose that the decorated moduli space $\mathfrak{M}_G(\Sigma,V)_{H,\mathcal{A}}$ is obtained as a smooth geometric quotient as before. Then $\mathfrak{M}_G(\Sigma,V)_{H,\mathcal{A}}$ is symplectic and its symplectic form is obtained combinatorially by gluing triangles together, see \cite[\S 1.2.1]{quisur2}. This is the case for the symplectic groupoid $\mathcal{G}_k $ constructed as a decorated moduli space $\mathfrak{M}_G(\widehat{\Sigma_k},\widehat{ V_k})_{\mathcal{L}_k,\widehat{\mathcal{A}_k}   }$ in Proposition \ref{pro:intcon}, which is the key example in this paper. 

In general, the basic building block to construct $\mathfrak{M}_G(\Sigma,V)_{H,\mathcal{A}}$ corresponds to a triangle $(\Delta^2,\{v_i\}_{i=1,2,3})$ that we view here as a disc with three marked points. The corresponding representation variety is $M_{\Delta^2}:=\hom(\Pi_1({\Delta^2},\{v_i\}_{i=1,2,3}),G)$. Choose a triangulation $\mathcal{T}=(\mathcal{T}_2,\mathcal{T}_1,\mathcal{T}_0   ) $ of $\Sigma$ in which we denote by $\mathcal{T}_i$ the set of $i$-dimensional simplices; the property that we require from $\mathcal{T} $ is that $V=\partial \Sigma \cap \mathcal{T}_0$. Then the space $\hom(\Pi_1(\Sigma,V),G)$ can be represented as the quotient $M= \mathcal{M} /G^{V'}$, where $\mathcal{M}  \subset M_{\Delta^2}^{ \mathcal{T}_2  } $ is given by 
\[ \mathcal{M} =\{(\rho_T)_{T\in \mathcal{T}_2 }|\rho_T(e)=\rho_{T'}(e')^{-1} \text{ if $e=e'$ as elements of $\mathcal{T}_1 $ and $T,T'\in \mathcal{T}_2$ satisfy $T\neq T'$ } \} \]
and the action is the gauge action \eqref{eq:gauact} corresponding to the set of vertices $V'\subset \mathcal{T}_0 $ contained in the interior of $\Sigma$. Use any identification $ M_{\Delta^2}\cong G^2 $ which is consistent with the cyclic orientation of the edges in $\Delta^2$. Let $\theta^l,\theta^r$ be the respective left and right Maurer-Cartan 1-forms on $G$. Pulling back the 2-form $\omega\in \Omega^2(G^2) $ defined by 
\begin{align}  
 \omega_{(a_1,a_2)}=\frac{1}{2}\langle {a_1}^*\theta^l,{a_2}^*\theta^r \rangle\quad \forall a_1,a_2\in G ,\label{eq:polwie} 
\end{align} 
we obtain $\Omega_{\Delta^2}\in \Omega^2 \left(M_{\Delta^2}\right)$, it turns out that $\Omega_{\Delta^2}$ is independent of the chosen identification. We denote by $\iota: \mathcal{M} \subset M_{\Delta^2}^{ \mathcal{T}_2  } $ the inclusion and we let $\text{pr}_T:M_{\Delta^2}^{ \mathcal{T}_2  } \rightarrow M_{\Delta^2}$ be the projection to the factor corresponding to $T\in \mathcal{T}_2$. Then $\iota^*\sum_{T\in \mathcal{T}_2  }\text{pr}_T^* \Omega_{\Delta^2}$ descends to a 2-form $\Omega_{\Sigma,V}  \in \Omega^2({M}) $ via the quotient map $\mathcal{M} \rightarrow {M} $ \cite[Thm. 1.2]{quisur2}. The 2-form $\Omega_{\Sigma,V}$ is usually called {\em quasi-symplectic} or {\em quasi-hamiltonian}. A corollary of this discussion is that $\Omega_{\Sigma,V}$ descends to a symplectic form on the decorated moduli space $\mathfrak{M}_G(\Sigma,V)_{H,\mathcal{A}}$. 
\subsection{Groupoid structures, Morita equivalences and representation varieties}\label{app}
Here we make explicit the groupoid structures and Morita equivalences used in the text.
\subsubsection{Morita equivalences at the level of representation varieties}\label{subsec:mor} 
The groupoid structures described in \S\ref{subsec:poigromod} arise as quotients of pair groupoids, see \cite[Prop. 3.11]{poigromod}. If $(\Sigma,V)$ is a surface that is being doubled to produce $(\widehat{\Sigma},\widehat{V}  )$ as in \S\ref{subsec:poigromod}, then we have that the pair groupoid over $M:=\hom(\Pi_1(\Sigma,V),G)$ is equipped with a multiplicative moment map
\[ (\mu,\mu):M \times M \rightarrow G^E \times G^E. \]
Then, by taking the level set $(\mu,\mu)^{-1}(U)$, where
\[ U=\left( G^{E-S} \times G^{E-S}\times \prod_{e\in S } G_\Delta\right) \hookrightarrow \left(G^{E-S} \times G^{E-S} \times \prod_{e\in S} (G_e \times G_e) \right)= (G^E \times G^E); \]
we have that $\hom(\Pi_1(\widehat{\Sigma},\widehat{V}),G)$ is the quotient $(\mu,\mu)^{-1}(U)/G^L$, where $G^L$ corresponds to the diagonal gauge action \eqref{eq:gauact} at all the vertices of loops in $S$ and also at some points in $V$ which are adjacent to edges in $S$. 

The same considerations allow us to glue two marked surfaces $(\Sigma,V)$ and $(\Sigma',V')$ along a common set of edges $S\subset E$, $S\subset E'$ to produce a new marked surface $(\Sigma\cup_S \Sigma',V\cup_S V')$ in which the set of marked points is obtained by including $V\sqcup V'$ in $\Sigma\cup_S \Sigma'$ and deleting the vertices adjacent to edges in $S$ corresponding to the ones deleted before, so we consider inclusions $L\subset V$ and $L\subset V'$. Denote $\hom(\Pi_1(\Sigma',V'),G)$ by $M'$ with moment map $\mu'$ and 
\[ U_S=\left(G^{ E-S} \times G^{E'-S}  \times \prod_{e\in S } G_\Delta\right) \hookrightarrow \left(G^{ E-S} \times G^{E'-S} \times \prod_{e\in S} (G_e \times G_e) \right)= (G^E \times G^{E'}). \]
Then we have that $\hom(\Pi_1(\Sigma\cup_S \Sigma',V\cup_S V'),G)$ is obtained as the quotient $(\mu,\mu')^{-1}(U_S)/G^L$:
\begin{align}  M \times_S M':= (\mu,\mu')^{-1}(U_S) \rightarrow (\mu,\mu')^{-1}(U_S)/G^L\cong  \hom(\Pi_1(\Sigma\cup_S \Sigma',V\cup_S V'),G). \label{eq:quomodspa}\end{align} 
Note that here we are dividing by the same subgroup $G^L$ of the gauge group considered before, since we are deleting the same vertices adjacent to edges in $S$ that we deleted to produce $\hom(\Pi_1(\widehat{\Sigma},\widehat{V}),G) \cong (\mu,\mu)^{-1}(U)/G^L$. If $(\Sigma,V)=(\Sigma',V')$, then \eqref{eq:quomodspa} gives us a Lie groupoid over $\hom(\Pi_1({\Sigma},{V_0}),G)$, where $V_0$ is obtained by intersecting $V\cup_SV $ with the image of $V$ under one of the inclusions $\Sigma \subset \Sigma\cup_S \Sigma$. Otherwise, it is a space that serves as a bimodule for a Morita equivalence.
 
\begin{thm}\label{thm:morequ} Suppose that we have two inclusions $S\subset E, E'$ that induce marked surfaces by gluing as above. Then the canonical Morita equivalence between pair groupoids 
\[ \adjustbox{scale=0.85,center}{ \begin{tikzcd}
M\times M \arrow[d, shift right] \arrow[d, shift left] &  & M\times M' \arrow[lld] \arrow[rrd] &  & M'\times M' \arrow[d, shift right] \arrow[d, shift left] \\
M                                                      &  &                                    &  & M'                                                      
\end{tikzcd} } \]
descends by reduction to a Morita equivalence between Lie groupoids
\[ \adjustbox{scale=0.85,center}{\begin{tikzcd}
{\hom(\Pi_1(\widehat{\Sigma},\widehat{V}),G)} \arrow[d, shift right] \arrow[d, shift left] &  & {\hom(\Pi_1(\Sigma\cup_S \Sigma',V\cup_S V'),G)} \arrow[lld,"p"'] \arrow[rrd,"p'"] &  & {\hom(\Pi_1(\widehat{\Sigma'},\widehat{V'}),G)} \arrow[d, shift right] \arrow[d, shift left] \\
{\hom(\Pi_1({\Sigma},{V_0}),G)}                                                            &  &                                                                          &  & {\hom(\Pi_1({\Sigma'},{V_0'}),G).}                                                           
\end{tikzcd}} \]
Moreover, if $(\widehat{\Sigma},\widehat{V})$, $(\Sigma\cup_S \Sigma',V\cup_S V')$ and $(\widehat{\Sigma'},\widehat{V'})$ are fully marked surfaces as in \S\ref{subsec:mom map surf}, then their corresponding quasi-symplectic forms satisfy 
\begin{align}  \mathtt{s}^*\Omega_{\Sigma\cup_S \Sigma',V\cup_S V'}-\mathtt{t}^*\Omega_{\Sigma\cup_S \Sigma',V\cup_S V'}=\text{pr}_1^*\Omega_{\widehat{\Sigma},\widehat{V}}- \text{pr}_2^*\Omega_{\widehat{\Sigma'},\widehat{V'}}; \label{eq:equ2for}  \end{align} 
where $\mathtt{s},\mathtt{t}$ are the source and target maps of the action groupoid corresponding to the actions above:\footnotesize 
\begin{align*}  \hom(\Pi_1(\widehat{\Sigma},\widehat{V}),G) \ltimes \hom(\Pi_1(\Sigma\cup_S \Sigma',V\cup_S V'),G) \rtimes \hom(\Pi_1(\widehat{\Sigma'},\widehat{V'}),G) \rightrightarrows \hom(\Pi_1(\Sigma\cup_S \Sigma',V\cup_S V'),G)  \end{align*} \normalsize
and $\text{pr}_i$ are the corresponding projections.  
\end{thm}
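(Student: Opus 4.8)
The plan is to build the reduced Morita equivalence directly from the canonical pair-groupoid Morita equivalence displayed in the statement, exploiting that every ingredient — the two groupoid structures, the bimodule actions, and the quasi-symplectic $2$-forms — is produced by the single operation of restricting to a moment-map level set and dividing by the gauge action of $G^L$, exactly as in \S\ref{subsec:mor} and \cite[Prop.~3.11]{poigromod}. Thus there are two things to establish: first, that the canonical actions of the pair groupoids $M\times M$ and $M'\times M'$ on $M\times M'$ survive this reduction and descend to a Morita equivalence of the reduced groupoids; and second, the $2$-form identity \eqref{eq:equ2for}. The first point is essentially functoriality of reduction, while the second is the computational heart.

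For the descent, I would start from the standard pair-groupoid bimodule structure: $M\times M$ acts on the left of $M\times M'$ by $(x,y)\cdot(y,w)=(x,w)$ and $M'\times M'$ acts on the right by $(x,w)\cdot(w,w')=(x,w')$, with moment maps the two projections, which are surjective submersions, commute, and realize the orbit-space identifications $(M\times M')/(M\times M)\cong M'$ and $(M\times M')/(M'\times M')\cong M$. The key local check is that these actions preserve the relevant level sets: if $(\mu(x),\mu(y))\in U$ and $(\mu(y),\mu'(w))\in U_S$, then for every glued edge $e\in S$ the $e$-components of $\mu(x)$, $\mu(y)$ and $\mu'(w)$ all agree in $G_\Delta$, so by transitivity $(\mu(x),\mu'(w))\in U_S$ and the composite $(x,w)$ again lies in the bimodule level set; the analogous statement holds for the right action. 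Since the gauge action of $G^L$ commutes with pair-groupoid composition, these restricted action maps are $G^L$-equivariant and descend to the quotients, which by \eqref{eq:quomodspa} are precisely the three spaces appearing in the second diagram. The Morita axioms for the descended data — surjective submersivity of $p,p'$, commutation of the two actions, and the two orbit-space identifications — then follow from the corresponding pair-groupoid statements together with the principality of the $G^L$-action, so that restriction to the level sets and passage to the quotient preserve submersions as well as the free, proper transitivity on the $p$- and $p'$-fibres.

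For the $2$-form identity I would use the combinatorial description of the quasi-symplectic forms from \S\ref{subsec:mom map surf}: after choosing triangulations of the three fully marked surfaces that are compatible with the gluing along $S$, each of $\Omega_{\widehat\Sigma}$, $\Omega_{\Sigma\cup_S\Sigma'}$ and $\Omega_{\widehat{\Sigma'}}$ is the pullback of a sum of copies of the basic triangle form \eqref{eq:polwie}, one per triangle. Because no triangle straddles a glued edge, these sums split according to the $\Sigma$- and $\Sigma'$-pieces, and the orientation reversal built into the doubling produces the relative minus signs, so that $\Omega_{\widehat\Sigma}$ is the difference of a $\Sigma$-contribution in one orientation and the other. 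The source map $\mathtt{s}$ of the action groupoid is the projection onto the bimodule factor, while the target $\mathtt{t}$ is the result of the double action, i.e. the gluing that replaces the $\Sigma$-part of the bimodule representation by the image of the left factor under the target map of $\hom(\Pi_1(\widehat\Sigma,\widehat V),G)$ and its $\Sigma'$-part by the corresponding part of the right factor. Computing $\mathtt{s}^*\Omega_{\Sigma\cup_S\Sigma'}-\mathtt{t}^*\Omega_{\Sigma\cup_S\Sigma'}$ triangle by triangle, the surviving difference of $\Sigma$-contributions reassembles — using that the $\Sigma$-part of $\mathtt{s}$ is the source restriction of the left factor while the $\Sigma$-part of $\mathtt{t}$ is its target restriction — into $\text{pr}_1^*\Omega_{\widehat\Sigma}$, and symmetrically the difference of $\Sigma'$-contributions reassembles into $-\text{pr}_2^*\Omega_{\widehat{\Sigma'}}$, which is \eqref{eq:equ2for}. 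Conceptually this is the additivity of the quasi-Hamiltonian $2$-form under fusion and gluing in the sense of \cite{alemeimal,quisur2}.

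The main obstacle is precisely this last bookkeeping: one must verify that the triangle contributions match with the correct orientation signs and, in particular, that the interface triangles incident to the glued edges in $S$ contribute no leftover cross-terms after the cancellation. This is where the orientation conventions of the doubling and the explicit form \eqref{eq:polwie} must be used carefully, since it amounts to tracking how the representation values on each triangle transform under $\mathtt{s}$ versus $\mathtt{t}$. Once this identity is established, the reduced actions of the previous paragraph are automatically quasi-symplectic, which is exactly the property invoked downstream in the proof of Theorem \ref{thm:symdoumor}.
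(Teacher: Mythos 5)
There is a genuine gap in the first half of your argument. You assert that the surjective submersivity of $p$ and $p'$ ``follows from the corresponding pair-groupoid statements together with the principality of the $G^L$-action, so that restriction to the level sets and passage to the quotient preserve submersions.'' That is precisely the step which is \emph{not} formal: the projections $\text{pr}_1\colon M\times M'\to M$ and $\text{pr}_2\colon M \times M' \to M'$ are surjective submersions, but their restrictions to the level set $M\times_S M'=(\mu,\mu')^{-1}(U_S)$ need not be. Surjectivity of $p$ amounts to an extension problem: given a representation of $\Pi_1(\Sigma,V_0)$, one must produce $\rho'\in M'$ whose moment-map values on the glued edges $S$ match the prescribed ones, i.e.\ one must know that the composition of $\mu'\colon M'\to G^{E'}$ with the projection to $G^S$ is onto. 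Nothing in the pair-groupoid Morita equivalence or in the principality of the gauge action yields this; it is a topological fact about the surface $\Sigma'$. The paper's proof of this theorem is devoted exactly to that point: extend $S$ to a skeleton of $(\Sigma',V')$, so that $\Pi_1(\Sigma',V')$ is a free groupoid on a generating set containing $S$, whence representations with arbitrary prescribed values on $S$ exist. (Compare Step 3 of the proof of Theorem \ref{thm:symdoumor}, where the analogous surjectivity of moment maps again requires a bespoke argument rather than formal reduction.) Without this step your descended actions exist, but the Morita axioms --- in particular that $p,p'$ are surjective submersions --- are unproven.

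The second half of your proposal, the identity \eqref{eq:equ2for}, is in substance the paper's argument. The paper assumes without loss of generality that $(\Sigma,V)$ and $(\Sigma',V')$ are themselves fully marked and invokes the gluing identity $q^*\Omega_{\Sigma\cup_S \Sigma',V\cup_S V'}=(\Omega_{\Sigma,V},-\Omega_{\Sigma',V'})$ for the quotient map $q\colon M\times_S M'\to \hom(\Pi_1(\Sigma\cup_S\Sigma',V\cup_S V'),G)$, together with its analogues for $\widehat{\Sigma}$ and $\widehat{\Sigma'}$; both sides of \eqref{eq:equ2for} then pull back along surjective submersions to the same combination of $\pm\Omega_{\Sigma,V}$, $\pm\Omega_{\Sigma',V'}$ terms, and the identity descends. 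Your triangle-by-triangle bookkeeping is a re-derivation of that gluing identity and would work, but you leave it as an acknowledged ``obstacle'' rather than carrying it out; citing the gluing formula for the quasi-symplectic forms (as the paper does, following \cite{quisur2}) closes it in one line.
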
  
\begin{proof} The fact that the canonical Morita equivalence descends to a bimodule between the indicated groupoids is immediate. The only property that may be non obvious is the fact that $p$ and $p'$ are surjective submersions. To show surjectivity note that $p$ being surjective amounts to extending a representation of $\Pi_1(\Sigma,V_0)$ to one of $\Pi_1(\Sigma\cup_S \Sigma',V\cup_S V')$. But this is the same as taking a representation with prescribed values on $S$ and extending it to $\Pi_1(\Sigma',V')$. This is possible because we can extend $S$ to a skeleton that serves to identify $\Pi_1(\Sigma',V')$ with a free groupoid and so the representations of the latter groupoid are determined by their arbitrary values on independent generators. 

The last statement follows from assuming without loss of generality that $(\Sigma,V)$ and $(\Sigma',V')$ are fully marked surfaces themselves and then the quasi-symplectic forms satisfy 
\[ q^*\Omega_{\Sigma\cup_S \Sigma',V\cup_S V'}=(\Omega_{\Sigma,V},-\Omega_{\Sigma',V'}); \]
where $q:M \times_S M' \rightarrow \hom(\Pi_1(\Sigma\cup_S \Sigma',V\cup_S V'),G)$ is the quotient map, and similar equations hold for the other quasi-symplectic forms. As a consequence, \eqref{eq:equ2for} holds.  
\end{proof}
\begin{rema} When imposing boundary decorations, \eqref{eq:equ2for} implies that the Morita equivalences of Theorem \ref{thm:morequ} descend to commuting symplectic (or Poisson) actions on a bimodule, see \cite[Lemma 2.1]{morsym}. However, principality might not hold. \end{rema} 

\subsubsection{The groupoid structures on $\widetilde{F}^*_{2n} $ and $\mathcal{G}_{2n} $}\label{subsec:expgro}
The quasi-Poisson structures of \S\ref{subsec:quapoimodspa} may be explicitly described in terms of the choice of a skeleton for the corresponding marked surface, such a choice also makes explicit the groupoid structures on decorated moduli spaces introduced in \S\ref{sec:morequ}. 

Suppose that we take a disc with two marked points $(\Sigma_1,V_1)$. If we double it by taking two copies of $\Sigma_1$ and gluing them along two arcs, one lying in each boundary edge, we obtain $(\widehat{\Sigma_1},\widehat{V_1} ) $, a cylinder with two marked points on each boundary component, see Fig.~\ref{fig:d(g)}.
\begin{figure} 
  \adjustbox{scale=0.65}{ \begin{tikzpicture}[line cap=round,line join=round,>=stealth,x=1cm,y=1cm]
\clip(0.9225977872989136,5.054707750009231) rectangle (8.939063347504506,10.93009059971214);
\draw [rotate around={2.4681179138281935:(4.897030347083754,7.969806602197147)},line width=1.5pt] (4.897030347083754,7.969806602197147) ellipse (2.493458409507036cm and 2.442284565100449cm);
\draw [line width=1.5pt] (4.898569333327452,7.945723450432493) circle (1.0329687440559219cm);
\draw [dotted,line width=1pt,->] (4.868322580846303,10.42)-- (4.865265084346215,8.978155168850842);
\draw [dotted,line width=1pt,->] (4.864506859204151,5.527580972840095)-- (4.865542535335133,6.913282819301171);
\draw [dotted,line width=1pt,->] (4.7,10.41)-- (4.868322580846303,10.41);
\draw [dotted,line width=1pt,->]  (4.89,10.41)--(4.868322580846303,10.41);
\draw [dotted,line width=1pt,->]  (4.81,8.98)--(4.868322580846303,8.989256585177921);
\draw [dotted,line width=1pt,->]  (4.9,8.987)--(4.868322580846303,8.989256585177921);
\begin{scriptsize}
\draw [fill=black] (4.868322580846303,10.41197109892813) circle (1.5pt);
\draw[color=black] (5,10.7) node {$\widehat{v}_2'$};
\draw [fill=black] (4.864506859204151,5.527580972840095) circle (1.5pt);
\draw[color=black] (5.1,5.27) node {$\widehat{v}_2$};
\draw [fill=black] (4.865265084346215,8.978155168850842) circle (1.5pt);
\draw[color=black] (5.1,8.6) node {$\widehat{v}_1'$};
\draw [fill=black] (4.865542535335133,6.913282819301171) circle (1.5pt);
\draw[color=black] (5.1,7.1) node {$\widehat{v}_1$};
\draw[color=black] (4.1,7.9) node {$g_1$};
\draw[color=black] (6.6,7.9) node {$a_1g_2a_2^{-1}$};
\draw[color=black] (7.6,7.9) node {$g_2$};
\draw[color=black] (1.6,7.9) node {$a_1^{-1}g_1a_2$};
\draw[color=black] (5.1,6.2) node {$a_2$};
\draw[color=black] (5.1,9.6) node {$a_1$};
\end{scriptsize}
\end{tikzpicture}} \begin{tikzpicture}[line cap=round,line join=round,>=triangle 45,x=1cm,y=1cm,scale=0.4]
\clip(6.731235138195215,2.370045838934741) rectangle (14.450076564099032,10.000518546930586);
\draw [line width=1pt] (10.038525092765054,7.201928556518199) circle (2.538970219326206cm);
\draw [line width=.8pt,dashed] (8.541048363676854,9.25227809268628)-- (7.707309555030952,6.196043925334053);
\draw [line width=.8pt,dashed] (7.707309555030952,6.196043925334053)-- (9.951549208037106,4.664448513226991);
\draw [line width=.8pt,dashed] (9.951549208037106,4.664448513226991)-- (12.400823407913643,6.271382241243403);
\draw [line width=.8pt,dashed] (12.400823407913643,6.271382241243403)-- (11.466682015570012,9.301151645775183);
\begin{scriptsize}
\draw [fill=black] (11.466682015570012,9.301151645775183) circle (1.5pt);
\draw[color=black] (11.748194840341337,9.473334159621421) node {$v_5$};
\draw [fill=black] (8.541048363676854,9.25227809268628) circle (1.5pt);
\draw[color=black] (8.128123976674797,9.437471276131001) node {${v}_1$};
\draw [fill=black] (9.951549208037106,4.664448513226991) circle (1.5pt);
\draw[color=black] (9.88415704638585,5.254610839828577) node {$v_3$};
\draw [fill=black] (7.707309555030952,6.196043925334053) circle (1.5pt);
\draw[color=black] (7.133216184507002,6.3102278357664305) node {${v}_2$};
\draw [fill=black] (12.400823407913643,6.271382241243403) circle (1.5pt);
\draw[color=black] (12.952771651846273,6.274364952276011) node {$v_4$};
\end{scriptsize}
\end{tikzpicture}
    \caption{The edges labelled by $\{g_1,g_2,a_1,a_2\}$ constitute a skeleton of the cylinder and $\mu$ is given by restricting a representation to the boundary; to the right a skeleton for $\Sigma_{0,4}$}
    \label{fig:d(g)}
\end{figure}
Using the generators displayed in Fig.~\ref{fig:d(g)}, the groupoid structure on $\hom(\Pi_1(\widehat{\Sigma_1},\widehat{V_1} ),G) \rightrightarrows \hom(\Pi_1({\Sigma_1},{V_1} ),G)$ becomes the product of the pair groupoid $G \times G \rightrightarrows G$ and two copies of $G$ as a Lie group $\mathcal{E} :=((G \times G)\times G^2)\rightrightarrows G$, the moment map $\mu$ is then given by:
\begin{align}
    &\mu(a_1,a_2,g_1,g_2)=((a_1g_2a_2^{-1},g_1),(a_1^{-1}g_1a_2,g_2))\quad  \forall (a_1,a_2,g_1,g_2)\in \mathcal{E} ;\label{eq:mommapgro}
\end{align} 
In Fig.~\ref{fig:d(g)}, we may identify $a_i,g_i$ with the values of a representation $\rho\in \hom(\pi_1(\widehat{\Sigma_1},\widehat{V_1}),G)$ on the skeleton therein displayed; the source and target maps are given then by applying $\rho$ to $a_i$ as in the figure.
 
To obtain the groupoid structures of \eqref{eq:dousurdousym} and \eqref{eq:dousurdual} we may just take a skeleton of the surface that is being doubled and then we take the fibred product of as many copies of the groupoid $\mathcal{E} $ as there are edges in the skeleton, with respect to the moment maps and the incidence relations prescribed by the skeleton. We start with the case of \eqref{eq:dousurdual} as \eqref{eq:dousurdousym} can be expressed in terms of it, as we shall see in Proposition \ref{pro:slim} below.
\begin{prop}\label{pro:expduapoi} We have that $\widetilde{F}^*_{2n} \rightrightarrows F_n$ is a complex algebraic Poisson groupoid, which is isomorphic to the quotient of an action groupoid:
\[ (G^* \times N_+^{n-1}) \times_{B_+^{n}} G^n     \rightrightarrows F_n; \] 
where the space of arrows is an associated bundle to the principal $B_+^n$-bundle $G^n \rightarrow F_n$ and the multiplication is specified in \eqref{eq:grostrdua} below.  \end{prop}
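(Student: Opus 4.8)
The plan is to make the decorated moduli space defining $\widetilde{F}^*_{2n}\rightrightarrows F_n$ completely explicit from a skeleton of $(\Sigma_{0,n},V_{0,n})$ and then to match the reduced groupoid with the claimed associated bundle. First I would fix the skeleton of $(\Sigma_{0,n},V_{0,n})$ consisting of the $n$ segments joining $v_1$ to the remaining marked points, as on the right of Fig.~\ref{fig:d(g)}, so that a representation of $\Pi_1(\Sigma_{0,n},V_{0,n})$ is recorded by its values $(x_1,\dots,x_n)\in G^n$. Following the recipe of \S\ref{subsec:expgro}, the representation variety of the double underlying \eqref{eq:dousurdual} is then the fibred product of $n$ copies of the elementary groupoid $\mathcal{E}=((G\times G)\times G^2)\rightrightarrows G$, one per skeletal edge, glued along the incidences at $v_1$; this presents the groupoid \eqref{eq:dousurdual} and its moment map entirely in terms of the boundary edges.

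Next I would impose the boundary decoration \eqref{eq:decduagro} circle by circle. The component attached to $v_1$, constrained by $(a,b^{-1})\in G^*$, yields the $G^*$-factor; each component attached to $v_2,\dots,v_n$, constrained by $(a,b^{-1})\in\widetilde{B}_+$, forces its relative boundary coordinate into $N_+$ and so yields the $N_+^{n-1}$-factors; and the component attached to $v_{n+1}$, constrained by $(a,b^{-1})\in B_+\times G$, leaves a free $G$-direction. I would then carry out the gauge reduction prescribed by the vertex decoration, whose isotropy is $B_+$ at the vertices decorated by $\widetilde{\mathfrak{b}}_+$ and trivial at those decorated by $\mathfrak{g}^*$ (cf.\ \eqref{eq:dec halfsur} and Remark \ref{rem:base F2n}). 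After this reduction the values read off along the source sheet assemble into the total space of the principal $B_+^n$-bundle $G^n\to F_n$, with $B_+^n$ acting as in \eqref{eq:gauactdisc}, while the decorated transition data between the two sheets give the fibre $G^*\times N_+^{n-1}$ with its residual $B_+^n$-action inherited from \eqref{eq:gauact}; this identifies the arrow space with the balanced product $(G^*\times N_+^{n-1})\times_{B_+^n}G^n$.

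With this parametrization in hand, I would read off the groupoid structure over $F_n$: source and target are the restrictions of a doubled representation to the two sheets $\iota_1,\iota_2$, each post-composed with the Poisson isomorphism onto $F_n$ of Remark \ref{rem:base F2n}, and the multiplication is obtained by gluing two representations along the shared sheet, producing the explicit law asserted in the statement. Algebraicity of all structure maps then follows exactly as in Proposition \ref{pro:intcon}, since the quotient by the fibrewise free $B_+^n$-action is a geometric quotient; the Poisson structure is the one already carried by the decorated moduli space, so $\widetilde{F}^*_{2n}$ is a complex algebraic Poisson groupoid.

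The step I expect to be the main obstacle is the bookkeeping of the gauge reduction: one must track precisely how the $n+1$ decoration constraints interact with the gauge action so as to cut the doubled data down to exactly $G^*\times N_+^{n-1}$ with the correct $B_+^n$-twisting, and then extract the explicit multiplication rather than merely an abstract groupoid law. Once the reduced arrows are pinned down in the coordinates $[(x,y),(n_1,\dots,n_{n-1}),(g_1,\dots,g_n)]$ used in the proof of Theorem \ref{thm:lifact}, the verification of the groupoid axioms is routine.
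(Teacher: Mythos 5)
Your proposal is correct and follows essentially the same route as the paper's proof: present the doubled representation variety via a skeleton as a fibred product of copies of $\mathcal{E}$, impose the boundary decoration \eqref{eq:decduagro}, carry out the gauge reduction in stages (collapsing each $\widetilde{B}_+$ to $N_+$ via $(a,b)\mapsto a^{-1}b$ at one sheet's vertices, then quotienting by the residual $B_+^n$) so that the arrow space becomes the associated bundle $(G^*\times N_+^{n-1})\times_{B_+^n}G^n$, and obtain algebraicity and the Poisson structure from the geometric-quotient description. The only discrepancies are minor: the paper's skeleton joins consecutive marked points (the one actually shown in Fig.~\ref{fig:d(g)}, and the one from which \eqref{eq:grostrdua} is read off literally) rather than radiating from $v_1$, which changes only the coordinates of the presentation, and your appeal to Proposition \ref{pro:intcon} for algebraicity would be circular in the paper's logical order (its algebraicity rests on Proposition \ref{pro:slim}, hence on the present proposition), although you also give the correct direct justification, namely that the quotient is geometric because it is a composite of associated-bundle quotients.
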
 
\begin{proof} We will provide a description of the desired groupoid structure as the quotient by a multiplicative action of $B_+^{2n} \rightrightarrows B_+^n$ (regarded as a pair groupoid $B_+^n \times B_+^n \rightrightarrows B_+^n $) on an action groupoid:
\begin{align}  \left[(G^* \times \widetilde{B}_+^{n-1} \times B_+) \ltimes  G^n  \right] \rightrightarrows G^n. \label{eq:duagroske}\end{align}  We start with the following skeleton of $(\Sigma_{0,n},V_{0,n})$: we take one edge between each pair of consecutive marked points except for $v_{n+1}$ and $v_1$ as in the disc displayed in Fig.~\ref{fig:d(g)}. The groupoid \eqref{eq:duagroske} is then obtained as the fibred product of $n$ copies of $\mathcal{E} $ and the $n$ subgroups $L_{v_i}\subset G \times G$ given by the decoration \eqref{eq:decduagro}. We only give the final simplified description of this fibred product below.

Embed the edges of the skeleton of $(\Sigma_{0,n},V_{0,n})$ described above using one of the inclusions $(\Sigma_{0,n},V_{0,n}) \subset (\widehat{\Sigma_{0,n}},\widehat{V_{0,n}})  $. Taking the union of these edges with all the boundary edges in $\widehat{\Sigma_{0,n}} $ except for the one decorated with $G$, we obtain a skeleton for $(\widehat{\Sigma_{0,n}},\widehat{V_{0,n}})$, see Fig.~\ref{fig:skedual}.
To introduce notation, we let $(z_i,z_i')\in \widetilde{B_+} $ be the values of a representation on the boundary edges of a boundary component decorated with $\widetilde{B_+}$, similarly, $(x,y)\in G^*$ correspond to the boundary component decorated with $G^*=B_+ \times_T B_-\subset G \times G$ and $(g,b)\in G \times B_+$ corresponds to the remaining boundary component. Then the action map of \eqref{eq:duagroske} is given by 
\[ ((x,y),(z_i,z_i'),b) \cdot (g_1,\dots, g_n) =(h_1,\dots,h_n ); \]
where the components of $[h_1,\dots,h_n]$ are as below (their geometric meaning is illustrated by Fig.~\ref{fig:skedual}):
\begin{equation}  h_i=\begin{cases} &xg_1z_1^{-1}, \quad \text{if $i=1 $}  \\
&z_{i-1}'g_iz_i^{-1}, \quad \text{if $1<i\leq n-1$,} \\
&z_{n-1}'g_{n}b^{-1}, \quad \text{if $i=n$;} \end{cases}  \label{eq:grostrdua} \end{equation} 
for $n=1$, we put $h_1=xg_1b^{-1}$. This formula specifies the groupoid structure of \eqref{eq:duagroske} (which is isomorphic to a subgroupoid of \eqref{eq:dousurdual}).
\begin{figure}\centering 
\begin{tikzpicture}[line cap=round, line join=round, >=stealth, scale=0.6]

\coordinate (TL) at (0, 3);
\coordinate (BL) at (0, -3);
\coordinate (C1top) at (3, 0.8);
\coordinate (C1bot) at (3, -0.8);
\coordinate (C2top) at (7, 0.8);
\coordinate (C2bot) at (7, -0.8);

\draw [line width=1pt] (TL) -- (BL) node[midway, left=5pt] {};
\draw [line width=1pt] (TL) .. controls (12, 3) and (12, -3) .. (BL) node[pos=0.5, right=5pt] {$b$};

\draw [line width=1pt, ->] (C1bot) to[bend left=90] node[pos=0.5, left=2pt] {$y$} (C1top);
\draw [line width=1pt, ->] (C1bot) to[bend right=90] node[pos=0.5, right=2pt] {$x$} (C1top);

\draw [line width=1pt, ->] (C2bot) to[bend left=90] node[pos=0.5, left=2pt] {$z_1$} (C2top);
\draw [line width=1pt, ->] (C2bot) to[bend right=90] node[pos=0.5, right=2pt] {$z_1'$} (C2top);

\draw [line width=0.4pt,dashed,<-] (C1top) to[bend left=25] node[pos=0.5, below=1pt] {$h_1$} (C2top);
\draw [line width=0.4pt,dashed,<-] (C1bot) to[bend right=25] node[pos=0.5, above=1pt] {$g_1$} (C2bot);

\draw [line width=0.4pt,dashed,->] (TL) to[bend left=20] node[pos=0.4, below=1pt] {$h_2$} (C2top);
\draw [line width=0.4pt,dashed,->] (BL) to[bend right=20] node[pos=0.4, above=2pt] {$g_{2}$} (C2bot);

\begin{scriptsize}
\fill (TL) circle (1.5pt) node[left=3pt] {$\widehat{v}_3'$};
\fill (BL) circle (1.5pt) node[left=3pt] {$\widehat{v}_3$};
\fill (C1top) circle (1.5pt) node[above=2pt] {$\widehat{v}_1'$};
\fill (C1bot) circle (1.5pt) node[below=2pt] {$\widehat{v}_1$};
\fill (C2top) circle (1.5pt) node[above right] {$\widehat{v}_2'$};
\fill (C2bot) circle (1.5pt) node[below right] {$\widehat{v}_2$};
\end{scriptsize}

\end{tikzpicture}
\caption{The generators for $\Pi_1(\widehat{\Sigma_{0,2}},\widehat{V_{0,2}}) $ displayed above illustrate the groupoid structure $\widetilde{F}_{4}^*\rightrightarrows F_2 $}
\label{fig:skedual}
\end{figure}
It also follows from this description that \eqref{eq:duagroske} contains the action groupoid $B_+^n \ltimes G^n \rightrightarrows G^n$ induced by the gauge action \eqref{eq:gauact} on $\hom(\Pi_1(\Sigma_{0,n},V_{0,n}),G)$ restricted to the subgroup of $G^V$ determined by the decoration, which is exactly $B_+^n$; in fact, we have the inclusion $B_+^n \subset G^* \times \widetilde{B}_+^{n-1} \times B_+$ given by $(b_i)_i\mapsto ((1,1),(b_i,b_i)_{i<n},b_n)$. It follows that $\widetilde{F}_{2n}^*$ is isomorphic to the quotient of \eqref{eq:duagroske} by the action of $B_+^{2n} $ defined by right and left translation by elements in $B_+^n \ltimes G^n \rightrightarrows G^n$: 
\[ (b_i,c_i)_i \cdot u=\mathtt{m}((b_i,\mathtt{t}(u)),\mathtt{m}( u,\mathtt{i} (c_i,\mathtt{s}(u)))) . \]
As a consequence, using the map $p:\widetilde{B_+} \rightarrow N_+$ determined by $(a,b)\mapsto a^{-1}b$, we have that the groupoid structure on $\widetilde{F}_{2n}^* \rightrightarrows F_n$ is the unique one such that the quotient map $q_{\widetilde{F}_{2n}^* }: (G^* \times \widetilde{B}_+^{n-1} \times B_+) \times  G^n \rightarrow (G^* \times N_+^{n-1}) \times_{B_+^{n}} G^n     \rightrightarrows F_n$
\[ (((x,y),(z_i,z_i'),b),g_1\dots g_n) \mapsto ((x,y),[p(z_i,z_i'),g_1\dots g_n]) \]
is a groupoid morphism. Denote by $\widehat{v}_i,\widehat{v}_i'$ the two images of $v_i\in V_{0,n}$ in $\widehat{\Sigma_{0,n}}$ under the natural inclusions ${\Sigma_{0,n}}\subset \widehat{\Sigma_{0,n}}$, see Fig.~\ref{fig:skedual}. The quotient $q_{\widetilde{F}_{2n}^* }$ of the gauge action of $B_+^{2n}$ may be described by stages, first by letting a copy of $B_+^n$ act at the vertices $\widehat{v}_i'$ and then the remaining copy of $B_+^n$ acts at the vertices $\widehat{v}_i$. Hence the quotient $q_{\widetilde{F}_{2n}^* }$ may be viewed as the composite
\[ \begin{tikzcd}
G^*\times \widetilde{B}_+^{n-1} \times B_+ \times G^n  \arrow[rr] &  & G^*\times N_+^{n-1}\times G^n \arrow[rr] &  & \widetilde{F}_{2n}^*\cong (G^*\times N_+^{n-1})\times_{B_+^n} G^n
\end{tikzcd} \]
of two quotient maps for associated bundles: the first one corresponding to the homogeneous space quotient map $G^*\times \widetilde{B}_+^{n-1} \times B_+ \rightarrow G^*\times N_+^{n-1}$ and the trivial action on $G^n$; the second one being associated to the principal bundle $G^n \rightarrow F_n$ and the restricted gauge action of $B_+^n$ on $G^*\times N_+^{n-1}$. Therefore, $q_{\widetilde{F}_{2n}^*}$ is a geometric quotient \cite[Prop. 2.1.1.7]{gitdec} (the statement only mentions the existence of a categorical quotient but the proof shows it is geometric as well). Since the structure sheaf on the quotient is obtained from the invariant functions on $G^*\times \widetilde{B}_+^{n-1} \times B_+ \times G^n$, which inherit a Poisson structure from the quasi-Poisson formalism, we obtain the result. \end{proof}
\begin{rema} Note that, in order to apply the construction of \cite{gitdec} to our situation, we have to promote $G^*\times \widetilde{B}_+^{n-1} \times B_+ \times G^n$ to an affine scheme and then its quotient as above is obtained by gluing the quasi-affine schemes determined by a local trivialization of an algebraic principal bundle (in our case, such local trivializations exist in the Zariski topology). Then $(G^*\times N_+^{n-1})\times_{B_+^n} G^n$, as a complex manifold, is obtained as the analytification of the scheme resulting from that gluing. \end{rema} 
\begin{coro}\label{cor:gensch} Any generalized Schubert cell $\mathcal{O}^{\mathbf{u} } \subset F_n\subset \widetilde{F}_{2n}$ is contained in a single symplectic leaf of $\widetilde{F}_{2n}$ and in a symplectic subgroupoid of the form $\mathcal{G}\cdot^V \overline{\mathbf{u}} \subset \widetilde{F}_{2n} $. \end{coro}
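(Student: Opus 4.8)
The plan is to identify every generalized Schubert cell with a single orbit of the dual side groupoid $\widetilde{F}_{2n}^*\rightrightarrows F_n$ and then to feed this into Proposition~\ref{cor:symorb}. Fix representatives $\dot u_i\in N_G(T)$ of the Weyl group elements $u_i$ and let $\overline{\mathbf{u}}:=[\dot u_1,\dots,\dot u_n]\in F_n$ be the resulting base point of $\mathcal{O}^{\mathbf{u}}$. The core of the argument is the orbit identity
\[ \widetilde{F}_{2n}^*\cdot \overline{\mathbf{u}}=\mathcal{O}^{\mathbf{u}}, \]
where the left-hand side is the orbit of $\overline{\mathbf{u}}$ under the Lie groupoid $\widetilde{F}_{2n}^*\rightrightarrows F_n$.

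To establish this I would use the explicit groupoid structure of Proposition~\ref{pro:expduapoi}. Applying the target of an arrow to $\overline{\mathbf{u}}$ produces, by \eqref{eq:grostrdua}, a point $[h_1,\dots,h_n]$ with $h_1=x\dot u_1 z_1^{-1}$, $h_i=z_{i-1}'\dot u_i z_i^{-1}$ for $1<i<n$, and $h_n=z_{n-1}'\dot u_n b^{-1}$, where the parameters $x$, $z_i$, $z_i'$, $b$ all lie in $B_+$ (being the relevant components of $G^*$, $\widetilde{B}_+$, and the last copy of $B_+$). Since $B_+\dot u_i B_+=B_+u_iB_+$, each $h_i$ lies in the Bruhat cell $B_+u_iB_+$, which immediately gives the inclusion $\widetilde{F}_{2n}^*\cdot\overline{\mathbf{u}}\subseteq\mathcal{O}^{\mathbf{u}}$. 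For the reverse inclusion I would solve, for an arbitrary $[p_1,\dots,p_n]\in\mathcal{O}^{\mathbf{u}}$ with $p_i\in B_+u_iB_+$, the equations $h_i=p_i$ in $F_n$: one has the action parameters in $B_+$ together with the residual $\times_{B_+}$ and $/B_+$ equivalences defining $F_n$ at one's disposal, so that the coupling of the torus parts imposed by the $\widetilde{B}_+$-constraint can be absorbed. This surjectivity onto the entire Bruhat-product cell is the main obstacle; it is handled exactly as in Step~3 of the proof of Theorem~\ref{thm:symdoumor}, using that each $B_+u_iB_+$ is a single $(B_+\times B_+)$-orbit of $\dot u_i$, and that $\mathcal{O}^{\mathbf{u}}$ is connected so the orbit cannot be a proper subcell.

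With the orbit identity in hand, the statement follows from Proposition~\ref{cor:symorb} and the Remark following it. That Remark identifies the base of the vertical orbit $\mathcal{G}_{2n}\cdot^V\overline{\mathbf{u}}$ with $\widetilde{F}_{2n}^*\cdot\overline{\mathbf{u}}=\mathcal{O}^{\mathbf{u}}$, while Proposition~\ref{cor:symorb} asserts that $\mathcal{G}_{2n}\cdot^V\overline{\mathbf{u}}$ is a symplectic subgroupoid of $\widetilde{F}_{2n}$; this is precisely the subgroupoid $\mathcal{G}\cdot^V\overline{\mathbf{u}}$ of the statement, whose unit submanifold is $\mathcal{O}^{\mathbf{u}}$. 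Finally, since $\mathcal{G}_{2n}\rightrightarrows\widetilde{F}_{2n}$ is a symplectic groupoid, its orbit through $\overline{\mathbf{u}}$ coincides with the symplectic leaf of the Poisson manifold $\widetilde{F}_{2n}$ through $\overline{\mathbf{u}}$; by definition this orbit is $\mathcal{G}_{2n}\cdot^V\overline{\mathbf{u}}$, whose units constitute $\mathcal{O}^{\mathbf{u}}\subset F_n\subset\widetilde{F}_{2n}$. Hence $\mathcal{O}^{\mathbf{u}}$ is contained in this single symplectic leaf, which completes the argument.
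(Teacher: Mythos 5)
Your proposal takes essentially the same route as the paper: the published proof likewise consists of (i) reading off from formula \eqref{eq:grostrdua} that $\mathcal{O}^{\mathbf{u}}$ is precisely the $\widetilde{F}_{2n}^*$-orbit of $\overline{\mathbf{u}}$, and (ii) applying Proposition \ref{cor:symorb} together with the remark following it. Your verification of the orbit identity is more detailed than the paper's one-line assertion, and the mechanism you identify is the right one: the forward inclusion is immediate because $x,z_i,z_i',b\in B_+$, while for the reverse inclusion the torus coupling imposed by $\widetilde{B}_+$ is genuinely the only obstruction, and it is absorbed by re-choosing the factorizations $p_i=\beta_i\dot u_i\gamma_i$ (since $\dot u_i$ normalizes $T$, one can recursively arrange $\text{pr}_T(\beta_{i+1})=\text{pr}_T(\gamma_i)^{-1}$) together with the $B_+^n$-equivalences defining $F_n$.

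Two justification issues, neither fatal. First, your closing claim that the orbit $\mathcal{G}_{2n}\cdot^V\overline{\mathbf{u}}$ \emph{coincides} with the symplectic leaf through $\overline{\mathbf{u}}$ requires the vertical source-fibres of $\mathcal{G}_{2n}$ to be connected, and the paper explicitly flags this as unknown (see the remark after Proposition \ref{pro:symorbact}). What you actually need is weaker and already in hand: by Proposition \ref{cor:symorb} the orbit is a Poisson submanifold of $\widetilde{F}_{2n}$ whose induced Poisson structure is nondegenerate, so its tangent spaces lie in the characteristic distribution of $\widetilde{F}_{2n}$; since $\mathcal{O}^{\mathbf{u}}$ is a connected submanifold of this orbit, it is tangent to the characteristic foliation and therefore lies in a single symplectic leaf, irrespective of any source-connectedness. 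Second, the aside that ``$\mathcal{O}^{\mathbf{u}}$ is connected so the orbit cannot be a proper subcell'' is not a valid argument (a groupoid orbit can perfectly well be a proper subset of a connected manifold), and the appeal to Step 3 of Theorem \ref{thm:symdoumor} is only a loose analogy, since that step solves a different, Gauss-cell, equation; but both remarks are superfluous once the equations $h_i=p_i$ are solved explicitly as above.
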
 
\begin{proof} Here we denote by $\overline{\mathbf{u}}$ the image of $\mathbf{u}\in W^n$ in $F_n$ under the quotient map $G^n \rightarrow F_n$. Formula \eqref{eq:grostrdua} implies that $\mathcal{O}^{\mathbf{u} }$ is a $\widetilde{F}_{2n}^*$-orbit and it is connected, since $\widetilde{F}_{2n}^*$ is source-connected (this follows from the connectivity of $G^*,N_+,B_+$). So we have that $\mathcal{O}^{\mathbf{u} }\subset \mathcal{G}\cdot^V \overline{\mathbf{u}}$, but $\mathcal{G}\cdot^V \overline{\mathbf{u}}$ is a symplectic groupoid that contains the symplectic leaf through $\overline{\mathbf{u}}$, see Proposition \ref{cor:symorb}. Therefore, since $\mathcal{O}^{\mathbf{u} }$ is connected, it is contained in the connected component of $\mathcal{G}\cdot^V \overline{\mathbf{u}}$ which passes through $\mathbf{u}$, which is such a symplectic leaf.  \end{proof} 
  
\begin{prop}\label{pro:slim} The following statements hold. 
\begin{enumerate} \item The symplectic groupoid $\mathcal{G}_k$ of Proposition \ref{pro:intcon} is complex algebraic.
\item The symplectic double groupoid $\mathcal{G}_{2n}$ of Proposition \ref{pro:symdou} is a slim double groupoid which is a complex algebraic subvariety of    
\[ \widetilde{F}_{2n} \times \widetilde{F}_{2n}^* \times \widetilde{F}_{2n}^* \times \widetilde{F}_{2n}.    \]
\item The actions of $\mathcal{G}_{2m}$ and $\mathcal{G}_{2n}$ in the Morita equivalence of Theorem \ref{thm:symdoumor} are algebraic.
\end{enumerate} 
 \end{prop}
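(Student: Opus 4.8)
The plan is to dispatch the three claims in turn, leaning throughout on the explicit descriptions of the side groupoids from Proposition \ref{pro:expduapoi} and on the representation-variety formulas of \S\ref{subsec:expgro}, \S\ref{subsec:mor}. For (1), I would argue exactly as in the proof of Proposition \ref{pro:expduapoi}. Recall from Proposition \ref{pro:intcon} that $\mathcal{G}_k=\mu^{-1}(\mathcal{L}_k)/B_+^{2k-2}$, where $\mu^{-1}(\mathcal{L}_k)$ is the algebraic subvariety of $\hom(\Pi_1(\widehat{\Sigma_k},\widehat{V_k}),G)$ cut out by the conditions defining $L=\prod_i L_{v_i}$ of Remark \ref{rem:dec dou sur}, and $B_+^{2k-2}$ acts through a restriction of the gauge action \eqref{eq:gauact}. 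Since that action is principal, I would factor the quotient map by stages, first dividing at the primed vertices to obtain associated bundles whose fibres are flag-type homogeneous spaces $B_+\backslash G$, and then dividing by the residual principal $B_+$-bundle over the base, invoking \cite[Prop. 2.1.1.7]{gitdec} (and analytification) to see that each stage is a geometric quotient in the sense of \cite{mumgit}. Hence $\mathcal{G}_k$ is a smooth complex algebraic variety with algebraic structure maps; its symplectic form is the reduction of the manifestly algebraic $2$-form obtained by pulling back \eqref{eq:polwie} over a triangulation as in \S\ref{subsec:mom map surf}, and so it is algebraic as well.

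For (2), the double groupoid $\mathcal{G}_{2n}$ arises from the fourfold gluing \eqref{eq:glu4}, carrying the two commuting groupoid structures over $\widetilde{F}_{2n}$ and $\widetilde{F}_{2n}^*$ by Proposition \ref{pro:symdou}. To prove slimness I would choose a skeleton $\mathfrak{S}$ of $(\widehat{\Sigma_{2n}},\widehat{V_{2n}})$ that is symmetric under both reflection involutions of Proposition \ref{pro:symdou} and whose edges, apart from the two cutting curves, each lie in one of the four quadrant pieces. Writing a representation in terms of its values on $\mathfrak{S}$, every generator is then recorded by at least one of the four restrictions $\mathtt{t}^V$, $\mathtt{s}^H$, $\mathtt{t}^H$, $\mathtt{s}^V$, while the values on the cutting curves are forced by the source–target matching conditions displayed in \eqref{eq:luwei1}; consequently the map $d\mapsto(\mathtt{t}^V(d),\mathtt{s}^H(d),\mathtt{t}^H(d),\mathtt{s}^V(d))$ is injective, which is the slim property. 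These four structure maps are algebraic (on $\mathfrak{S}$ they are projections and products of coordinates), and $\widetilde{F}_{2n}$, $\widetilde{F}_{2n}^*$ are algebraic by part (1) and Proposition \ref{pro:expduapoi}, so the image is the Zariski-closed locus in $\widetilde{F}_{2n}\times\widetilde{F}_{2n}^*\times\widetilde{F}_{2n}^*\times\widetilde{F}_{2n}$ defined by the matching relations of \eqref{eq:luwei1}; combined with the algebraicity of $\mathcal{G}_{2n}$ from (1), this identifies $\mathcal{G}_{2n}$ with a complex algebraic subvariety of that product. I expect slimness to be the main obstacle: one must check carefully that the interior gluing data contribute no coordinates beyond those visible on the four corner restrictions, and it is here that the symmetric choice of skeleton and the $G_\Delta$-decoration of the cut edges in \S\ref{subsec:mor} do the essential work.

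Finally, for (3), the actions of $\mathcal{G}_{2m}$ and $\mathcal{G}_{2n}$ on the bimodule $\mathcal{G}_{n+m}$ of Theorem \ref{thm:symdoumor} are, before reduction, the concatenation (gluing) operations of \S\ref{subsec:mor} on representation varieties, which on a skeleton amount to matching boundary values and multiplying the corresponding group elements. These are polynomial in the chosen coordinates, and upon passing to the geometric quotients of (1) they descend to morphisms of complex algebraic varieties. Thus the action maps are algebraic, which completes the proof.
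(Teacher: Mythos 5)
Your strategy for slimness is in the same spirit as the paper's (express a representation through generators and check that the four corner restrictions determine it), but part (2) has a genuine gap at its decisive step. You claim that the image of $d\mapsto(\mathtt{t}^V(d),\mathtt{s}^H(d),\mathtt{t}^H(d),\mathtt{s}^V(d))$ is ``the Zariski-closed locus defined by the matching relations of \eqref{eq:luwei1}.'' That is false: \eqref{eq:luwei1} is an \emph{inclusion} of $\mathcal{D}$ into the source--target matching locus, not an equality, and here the discrepancy is quantitative. Since $\dim\mathcal{G}_{2n}=2\dim\widetilde{F}_{2n}$, $\dim\widetilde{F}_{2n}^*=\dim\widetilde{F}_{2n}$, and $\dim\widetilde{F}_{2n}-2\dim F_n=\dim B_+$, the matching locus has dimension $2\dim\widetilde{F}_{2n}+2\dim\widetilde{F}_{2n}^*-4\dim F_n=\dim\mathcal{G}_{2n}+2\dim B_+>\dim\mathcal{G}_{2n}$. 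So the image must be cut out by \emph{additional} equations, and these are exactly the interior-gluing conditions you flagged as ``the main obstacle'' but never produced: in the paper's proof they appear in \eqref{eq:eqs} as $f_1=1$ (triviality of the central loop of Fig.~\ref{fig:dousurdec}) and $f_2=(b_1,b_2)\in\widetilde{B}_+$ (the decoration of the outer boundary circle). Moreover, even with injectivity in hand, the image of an injective algebraic map is in general only constructible, so Zariski-closedness does not follow from algebraicity of the structure maps; the paper gets it by lifting the full system \eqref{eq:eqs} to an invariant Zariski-closed subset $X$ of the pre-quotient space $Y$ and using that $Q:Y\rightarrow Z$ is a geometric quotient (a product of the maps $q_{\widetilde{F}_l}$ and $q_{\widetilde{F}_{2l}^*}$ analyzed in Proposition~\ref{pro:expduapoi}), so that $Q(X)$ is closed. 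A smaller but related soft spot: the four restrictions take values in gauge quotients, so ``every generator is recorded'' does not by itself give injectivity on equivalence classes; the paper verifies this with the explicit formulas for $\Theta(\rho)$.

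On the other two items: your proof of (1), which stages the quotient $\mu^{-1}(\mathcal{L}_k)/B_+^{2k-2}$ directly as iterated associated-bundle quotients, is a genuinely different route from the paper's, where algebraicity of $\mathcal{G}_k$ for arbitrary $k=m+n$ is deduced from the very same embedding $\Theta=(\mathtt{t},q,p,\mathtt{s})$ into $\widetilde{F}_{m+n}\times\widetilde{F}_{2n}^*\times\widetilde{F}_{2m}^*\times\widetilde{F}_{m+n}$, which is why (1), (2) and (3) all come out of one computation there. Your staging is plausible (one must choose, for each gauge vertex, a distinct unconstrained skeleton edge on which the action is free translation, and check the decoration constraints never enter), but as written it is a sketch rather than a proof; if completed it would be a legitimate alternative for (1). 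Your part (3) matches the paper's reasoning (concatenation formulas descend through geometric quotients), but note that in both treatments (3) leans on the explicit algebraic description of $\mathcal{G}_{n+m}$ inside the product of side groupoids, i.e.\ on a corrected version of (2).
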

\begin{proof} We start with item (1). Suppose that $k=m+n$, using the notation of Theorem \ref{thm:symdoumor}, we shall show that 
\[ \Theta:=(\mathtt{t},q,p,\mathtt{s}  ):\mathcal{G}_{m+n}\rightarrow Z:= \widetilde{F}_{m+n} \times \widetilde{F}_{2n}^* \times \widetilde{F}_{2m}^* \times \widetilde{F}_{m+n}     \]
is an injective map with image a complex algebraic subvariety. We may describe $\widehat{\Sigma}_{m+n}$ as a disc with $m+n$ holes and represent it in such a way that the reflection symmetry with respect to the horizontal axis determines the vertical groupoid structure on $\mathcal{G}_{2n}$ and reflection with respect to the vertical axis induces the restriction maps $p$ and $q$, see Fig.~\ref{fig:dousurdec}. Then we use skeleta for $\Sigma_{0,m}$ and $\Sigma_{0,n}$ with edges given by joining each vertex $v_i$ with $v_1$ for $i>1$, as in the proof of Theorem \ref{pro:conflaqpoi}. By embedding these skeleta into $\Sigma_{m+n}$, we obtain a skeleton $\mathfrak{S} $ with edges $\{\mathbf{e}_i \}_{i=1...m+n}$, the two inclusions of $\mathbf{e}_i$ in $\widehat{\Sigma_{m+n}}$ shall be denoted by $\mathbf{e}_i $ and $\mathbf{e}_i'$ for simplicity. Denote by $\iota_1:{\widehat{\Sigma_{0,m}}} \hookrightarrow \widehat{\Sigma_{m+n}}$ and $\iota_2:{\widehat{\Sigma_{0,n}}} \hookrightarrow \widehat{\Sigma_{m+n}}$ the two inclusions. Let $(x_i,y_i)\in G^*$ be the values of a representation $\rho\in \Pi_1(\widehat{\Sigma_{m+n}},\widehat{V_{m+n}}) $ on the boundary edges lying over $\iota_i(v_1)$, and likewise let $(z_i,z_i')\in \widetilde{B}_+ $ be the values of $\rho$ on the other boundary edges lying in the interior of $\widehat{\Sigma}_{m+n}$. Finally, let $(b_1,b_2)\in \widetilde{B}_+$ be the values of $\rho$ on the outer boundary circle, see Fig.~\ref{fig:dousurdec}. We may identify a representation with its values on the indicated generators to obtain that  
\begin{align*}  &\Theta(\rho)=([h_i]_{i=1...n+m},q(\rho),p(\rho),[g_i]_{i=1...n+m}),\\
& q(\rho)=((x_2,y_2),[(z_i,z_i')_{i=m...m+n-2},(g_i)_{i>m}]),\\ 
&p(\rho)=((x_1,y_1),[(z_i,z_i')_{i=1...m-1},(g_i)_{i\leq m}]). 
 \end{align*}   
Note that $\mathtt{t}(q(\rho))= [h_i]_{i>m}$ and $\mathtt{t}(p(\rho))=[h_i]_{i\leq m}$, where $g_i=\rho(\mathbf{e}_i)$ and $h_i=\rho(\mathbf{e}_i')$ with respect to the groupoid structure \eqref{eq:duagroske}, see Fig.~\ref{fig:dousurdec}. So the map $\Theta$ is clearly an embedding. Conversely, $([h_i]_{i=1...m+n},\rho_2,\rho_1,[g_i]_{i...m+n})$ lies in the image of $\Theta$ if and only if 
\begin{equation} \begin{aligned} &q_0([h_i]_{i=1...m+n})=\mathtt{t}_{\widetilde{F}_{2n}^* }(\rho_2), \quad p_0([g_i]_{i=1...m+n})=\mathtt{s}_{\widetilde{F}_{2m}^*}(\rho_1),\\
& q_0([g_i]_{i=1...m+n})=\mathtt{s}_{\widetilde{F}_{2n}^* }(\rho_2), \quad p_0([h_i]_{i=1...m+n})=\mathtt{t}_{\widetilde{F}_{2m}^*}(\rho_1),\\
&f_1:=y_1g_1g_{m+1}^{-1}(h_1h_{m+1}^{-1}y_2)^{-1}=1,\quad f_{2}=(b_1,b_2)\in \widetilde{B}_+; \end{aligned} \label{eq:eqs} \end{equation}   
geometrically, $f_2\in \widetilde{B}_+$ is the decoration of the outer boundary circle, whereas $f_1=1$ means the triviality of the central loop in Fig.~\ref{fig:dousurdec}.
 
Denote by $q_{\widetilde{F}_l }:G^l \rightarrow G^l/B_+^{l-1}$ and $q_{\widetilde{F}_{2l}^* }:G^*\times \widetilde{B}_+^{l-1} \times B_+ \times G^l  \rightarrow (G^* \times N_+^{l-1}) \times_{B_+^l} G^l$ the natural quotient maps. Equations \eqref{eq:eqs} can be naturally lifted to define a complex algebraic subvariety 
\[ X\subset Y:=G^{m+n} \times \left[G^*\times \widetilde{B}_+^{n-1} \times B_+ \times G^n\right] \times \left[(G^*\times \widetilde{B}_+^{m-1} \times B_+ \times G^m) \right]\times G^{m+n} \]
but the quotient map $Q:Y \rightarrow Z$ is the product of maps of the form of $q_{\widetilde{F}_l }$ and $q_{\widetilde{F}_{2l}^* }$ and hence it is a geometric quotient. In fact, $Q$ is the composite of geometric quotients just like $q_{\widetilde{F}_{2l}^*}$, as discussed in Proposition \ref{pro:expduapoi}. So the Zariski closed invariant subset $X\subset Y$ projects to a Zariski closed subset of $Z$, which is exactly the image of $\Theta$.  

Using this description, we can see that the structure maps of $\mathcal{G}_k$ are algebraic, being determined by the ones of the groupoids $\widetilde{F}_{2l}^*$ as in the second formula of \eqref{eq:luweimul}. For the same reason, (2) and (3) hold. \end{proof}

\begin{figure}[H] 
\begin{tikzpicture}[>=stealth]

    \draw (0,0) ellipse (4cm and 2cm);
    \fill (0, 2) circle (1pt) node[above] {$\iota_1(\widehat{v}_2')=\iota_2(\widehat{v}_3')$}; 
    \fill (0, -2) circle (1pt) node[below] {$\iota_1(\widehat{v}_2)=\iota_2(\widehat{v}_3)$};
    
    \draw[dotted] (0, 2) -- (0, -2);
    
    \node at (-3, 1.6) {$b_1$};
    \node at (3, 1.6) {$b_2$};

    \fill (-1.5, 0.6) circle (1pt) node[left] {$\iota_1(\widehat{v}_1')$};
    \fill (-1.5, -0.6) circle (1pt) node[left] {$\iota_1(\widehat{v}_1)$};
    
    \draw[->] (-1.5, -0.6) to[bend left=80] node[left, scale=0.8] {$x_{1}$} (-1.5, 0.6);
    \draw[->] (-1.5, -0.6) to[bend right=80] node[right, scale=0.8] {$y_{1}$} (-1.5, 0.6);

    
    \fill (1.2, 0.6) circle (1pt) node[left] {$\iota_2(\widehat{v}_1')$};
    \fill (1.2, -0.6) circle (1pt) node[left] {$\iota_2(\widehat{v}_1)$};
    
    \draw[->] (1.2, -0.6) to[bend left=80] node[left, scale=0.8] {$y_{2}$} (1.2, 0.6);
    \draw[->] (1.2, -0.6) to[bend right=80] node[left, scale=0.8] {$x_{2}$} (1.2, 0.6);
    
    \fill (2.4, 0.6) circle (1pt) node[above right] {$\iota_2(\widehat{v}_2')$};
    \fill (2.4, -0.6) circle (1pt) node[below right] {$\iota_2(\widehat{v}_2)$};
    
    \draw[->] (2.4, -0.6) to[bend left=80] node[right, scale=0.8] {$z_{1}$} (2.4, 0.6);
    \draw[->] (2.4, -0.6) to[bend right=80] node[right, scale=0.8] {$z_{1}'$} (2.4, 0.6);

    \draw[dashed,<-] (1.2, 0.6) to[out=15, in=165] node[above] {$h_3$} (2.4, 0.6);
    \draw[dashed,<-] (1.2, -0.6) to[out=-15, in=-165] node[below] {$g_3$} (2.4, -0.6);

    \draw[dashed,->] (0, 2) to[out=180, in=60] node[pos=0.6, left] {$h_1$} (-1.5, 0.6);
    \draw[dashed,->] (0, 2) to[out=0, in=120] node[pos=0.6, right] {$h_2$} (1.2, 0.6);
    
    \draw[dashed,->] (0, -2) to[out=180, in=-60] node[pos=0.6, left] {$g_1$} (-1.5, -0.6);
    \draw[dashed,->] (0, -2) to[out=0, in=-120] node[pos=0.6, right] {$g_2$} (1.2, -0.6);

\end{tikzpicture}
\caption{Generators for the fundamental groupoid of $\widehat{\Sigma_{m+n}}$ when $m=1$, $n=2$}
    \label{fig:dousurdec}
\end{figure}

\printbibliography\footnotesize

\end{document}